\documentclass[reqno,11pt]{amsart}
\usepackage{amsmath,amssymb,amsthm,mathrsfs}
\usepackage{epsfig,color}
\usepackage[latin1]{inputenc}
\usepackage[english]{babel}
\usepackage[numbers]{natbib}

\usepackage{esint}

\usepackage{hyperref}

\voffset=-1.5cm \textheight=23cm \hoffset=-.5cm \textwidth=16cm
\oddsidemargin=1cm \evensidemargin=-.1cm
\footskip=35pt
\parindent=20pt

\sloppy \allowdisplaybreaks
\numberwithin{equation}{section}

\def\R{\mathbb R}
\def\N{\mathbb N}

\newcommand{\dist}{\mathop{\mathrm{dist}}}

\def\dist{{\rm dist}}

\def\00{{\bf 0}}


\newcommand{\tr}{\mbox{tr}\,}

\newcommand{\diver}{{\rm div \,}}

\newcommand{\meas}{{\rm meas \,}}

\newtheorem*{theorem*}{Theorem}
\newtheorem{theorem}{Theorem}[section]
\newtheorem{lemma}[theorem]{Lemma}
\newtheorem{proposition}[theorem]{Proposition}

\newtheorem{remark}[theorem]{Remark}

\newtheorem{definition}[theorem]{Definition}

 \begin{document}
  
    \title[Symmetry for anisotropic $p$-Laplacian in convex cones]{Symmetry results for critical anisotropic \\
    $p$-Laplacian equations in convex cones}

  \date{}

\author{Giulio Ciraolo}
\address{G. Ciraolo. Dipartimento di Matematica e Informatica,
Universit\`a di Palermo, Via Archirafi 34, 90123 Palermo, Italy}
\email{giulio.ciraolo@unipa.it}

\author{Alessio Figalli}
\address{A. Figalli. ETH Z\"urich, Mathematics Department, R\"amistrasse 101, 8092 Z\"urich, Switzerland
}
\email{alessio.figalli@math.ethz.ch}

\author{Alberto Roncoroni} 
\address{A. Roncoroni. Dipartimento di Matematica ``F. Casorati'', Universit\`a di Pavia, Via Ferrata 5, 27100 Pavia, Italy}
\email{alberto.roncoroni01@universitadipavia.it}

    \keywords{Quasilinear anisotropic elliptic equations; qualitative properties; Sobolev embedding, convex cones.}
    \subjclass{35J92; 35B33; 35B06.}

\begin{abstract}
Given $n \geq 2$ and $1<p<n$, we
consider the critical $p$-Laplacian equation $\Delta_p u + u^{p^*-1}=0$, which corresponds to critical points of the Sobolev inequality.
Exploiting the moving planes method,
it has been recently shown that positive solutions in the whole space are classified.  
Since the moving plane method strongly relies on the symmetries of the equation and the domain,  
in this paper we provide a new approach to this Liouville-type problem that allows us to give a complete classification of solutions in an anisotropic setting.
More precisely, we characterize solutions to the critical $p$-Laplacian equation induced by a smooth norm inside any convex cone.
In addition, using optimal transport, we prove a general class of (weighted) anisotropic Sobolev inequalities inside arbitrary convex cones.
\end{abstract}

\maketitle


\section{Introduction}
Given $n \geq 2$ and $1<p<n$, we consider the critical $p$-Laplacian equation in $\mathbb{R}^n$, namely
\begin{equation} \label{eq0Euclidea}
\Delta_p u+ u^{p^*-1} = 0,
\end{equation}
where 
$$
p^*=\dfrac{np}{n-p}
$$
is the critical exponent for the Sobolev embedding.
The classification of positive solutions to \eqref{eq0Euclidea} in $\mathbb{R}^n$ started in the seminal papers \cite{GNN} and \cite{CGS} for $p=2$ and it has been the object of several studies. Recently, in \cite{Vetois} and \cite{Sc}, positive solutions to \eqref{eq0Euclidea} in $\mathbb{R}^n$ belonging to the class 
\begin{equation} \label{D1p}
\mathcal{D}^{1,p}(\mathbb{R}^n) := \left\{ u \in L^{p^*}(\mathbb{R}^n) \, :\ \nabla u \in L^{p}(\mathbb{R}^n) \right\} \,
\end{equation}
have been completely characterized. In particular, it is proved that a positive solution $u \in \mathcal{D}^{1,p}(\mathbb{R}^n)$ to \eqref{eq0Euclidea} must be of the form $u(x)= U_{\lambda,x_0} (x)$, where 
\begin{equation} \label{talentiane_p_intro_Euclidee}
U_{\lambda,x_0} (x) := \left( \frac{\lambda^{\frac{1}{p-1}}\left(n^{\frac{1}{p}} \left(\frac{n-p}{p-1}\right)^{\frac{p-1}{p}} \right)}{\lambda^\frac{p}{p-1} + |x-x_0|^\frac{p}{p-1} } \right)^{\frac{n-p}{p}} \,,
\end{equation}
for some $\lambda >0$ and $x_0 \in \mathbb{R}^n$. The approach used to achieve this classification needs a careful application of the method of moving planes, and it requires asymptotic estimates of $u$ and $\nabla u$ both from above and below.

When $p=2$ it is well-known that \eqref{eq0Euclidea} is related to Yamabe problem, and the classification result gives a complete classification of metrics on $\mathbb{R}^n$ which are conformal to the standard one (see \cite{Aubin,Schoen,Trudinger2,Yamabe} and the survey \cite{LeeParker}). 

For $1<p<n$, the study of solutions to \eqref{eq0Euclidea} is also related to critical points of the Sobolev inequality. Sobolev inequalities have been studied for more general norms as well as in convex cones (see \cite{BaerFig,CabreRosSerra,FigIndrei,FigMagPrat,LionsPacella,LionsPacellaTricarico}), where they take the form
\begin{equation} \label{sobolev_cones} 
\|u\|_{L^{p^*}(\Sigma)} \leq S_{\Sigma,H} \|H(\nabla u)\|_{L^{p}(\Sigma)} \,,
\end{equation}
where $H$ is a norm\footnote{By abuse of notation, we say that $H:\R^n\to \R$ is a norm if $H$ is convex, positively one-homogeneous (namely, $H(\ell \xi)=\ell H(\xi)$ for all $\ell>0$), and $H(\xi)>0$ for all $\xi \in \mathbb S^{n-1}$. Note that we do not require $H$ to be symmetric, so it may happen that $H(\xi)\neq H(-\xi)$.} and $\Sigma$ is a convex open cone in $\mathbb{R}^n$ given by
\begin{equation} \label{Sigma_def}
\Sigma=\lbrace tx \, : \, x\in\omega, \, t\in(0,+\infty)\rbrace
\end{equation}
for some open domain $\omega\subseteq \mathbb{S}^{n-1}$.

As far as we know, the sharp version of \eqref{sobolev_cones} is not available in literature and for this reason we provide a proof in Appendix \ref{appendix_sobolev} by suitably adapting the optimal transportation proof of the Sobolev inequality \cite{CNV} to the case of cones. It is interesting to observe that our proof applies also to the case of weighted Sobolev inequalities for the class of weights considered in \cite{CabreRosSerra}, thus generalizing \cite[Theorem 1.3]{CabreRosSerra} to the full range of exponents $p\in (1,n)$.

Hence, as shown in Appendix \ref{appendix_sobolev}, the extremals of \eqref{sobolev_cones} are of the form
\begin{equation} \label{talentiane_p_intro_H}
u(x)= U_{\lambda,x_0}^H (x) := \left( \frac{\lambda^{\frac{1}{p-1}}\left(n^{\frac{1}{p}} \left(\frac{n-p}{p-1}\right)^{\frac{p-1}{p}} \right)}{\lambda^\frac{p}{p-1} + H_0(x-x_0)^\frac{p}{p-1} } \right)^{\frac{n-p}{p}}
\end{equation}
for some $ \lambda >0 $  (see also \cite{Aubin_sobolev,CNV,LionsPacellaTricarico,Talenti} and the references therein), where $H_0$ denotes the dual norm associated to $H$, namely
$$
H_0(\zeta):=\sup_{H(\xi)=1}\zeta\cdot \xi\qquad \forall\,\zeta \in \R^n.
$$
Moreover, if $\Sigma = \mathbb{R}^n$ then $x_0$ may be any point of $\mathbb{R}^n$; if $\Sigma=\mathbb{R}^k\times\mathcal{C}$ with $k\in\{1,\dots,n-1\}$ and $\mathcal C$ does not contain a line, then $x_0\in\mathbb{R}^k\times\mathcal{\{\mathcal O\}}$; otherwise, $x_0=\mathcal{O}$
(from now on, $\mathcal O$ denotes the origin).

The aim of this paper is to provide a complete classification result for critical anisotropic $p$-Laplace equations in convex cones. More precisely, we consider the problem
\begin{equation}\label{p_laplace_cono}
\begin{cases}
\diver(a(\nabla u))  + u^{p^*-1} = 0 & \text{ in } \Sigma \\
u>0 & \text{ in } \Sigma \\
a(\nabla u) \cdot \nu =0 & \text{ on } \partial\Sigma  \\
u \in \mathcal{D}^{1,p}(\Sigma) \,,& 
\end{cases}
\end{equation}
where $\nu$ is the outward normal to $\partial \Sigma$,
\begin{equation} \label{a_def}
a(\xi)=H^{p-1}(\xi)\nabla H(\xi)\qquad \forall\,\xi \in \R^n,
\end{equation}
and the space $\mathcal{D}^{1,p}(\Sigma)$
is defined as in \eqref{D1p} (with $\R^n$ replaced by $\Sigma$).
We will sometimes write 
\begin{equation*}
\Delta_p^H u=\diver(a(\nabla u))\, ,
\end{equation*}
where $\Delta_p^H$ is called the \emph{Finsler p-Laplacian} (or \emph{anisotropic p-Laplacian}) operator. It is clear that when we consider the case $\Sigma = \mathbb{R}^n$ no boundary conditions are given. 

We observe that if $u\in \mathcal{D}^{1,p}(\Sigma)$ is a positive critical point for the Sobolev functional
\begin{equation}\label{sobolev_functional_anisotr_coni}
J(u)=\dfrac{\int_\Sigma H(\nabla u)^p dx}{\left(\int_\Sigma |u|^{p^*} dx \right)^\frac{p}{p^*}},
\end{equation}
then $u$ satisfies \eqref{p_laplace_cono}.
The main goal of this paper is to classify the critical points for \eqref{sobolev_functional_anisotr_coni}, i.e. the classification of the solutions to \eqref{p_laplace_cono}.

\begin{theorem} \label{thm_main}
Let $n\geq 2$, $1<p<n$ , and let $\Sigma=\mathbb{R}^k\times\mathcal{C}$ be a convex cone, where $\mathcal C$ does not contain a line. Let $H$ be a norm of $\mathbb R^n$ such that $H^2$ is of class $C^{2}(\R^n\setminus \{\mathcal O\})$ and it is uniformly convex and $C^{1,1}$ in $\R^n$, namely there exist constants $0<\lambda \leq \Lambda$ such that
\begin{equation}
\label{eq:elliptic H}
\lambda {\rm Id}\leq H(\xi)\,D^2H(\xi)+\nabla H(\xi)\otimes \nabla H(\xi) \leq \Lambda\,{\rm Id}\qquad \forall\,\xi \in \R^n\setminus \{\mathcal O\}
\end{equation}
(note that $D^2(H^2)=2H\,D^2H+2\nabla H\otimes \nabla H$).

Let $u$ be a solution to \eqref{p_laplace_cono}.
Then $u(x)=U_{\lambda,x_0}^H (x)$ for some $ \lambda >0 $ and $ x_0 \in \overline{\Sigma}$, where $U_{\lambda,x_0}^H$ is given by \eqref{talentiane_p_intro_H}.
Moreover, 
\begin{itemize}
	\item[$(i)$] if $k=n$ then $\Sigma = \mathbb{R}^n$ and $x_0$ may be a generic point in $\mathbb{R}^n$;
	\item[$(ii)$] if $k\in\{1,\dots,n-1\}$ then $x_0\in\mathbb{R}^k\times\mathcal{\{\mathcal O\}}$;
	\item[$(iii)$] if $k=0$ then $x_0=\mathcal{O}$.
\end{itemize} 
\end{theorem}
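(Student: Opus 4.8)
The plan is to bypass the moving planes method entirely and run instead a Serrin-type integral (``$P$-function'') argument, of the kind used in rigidity proofs for Obata- and Pohozaev-type problems, which exploits the structural bounds \eqref{eq:elliptic H} on $H$ and uses the geometry of $\Sigma$ only through its convexity. Motivated by the explicit profile \eqref{talentiane_p_intro_H}, I would introduce the new function $v:=u^{-p/(n-p)}$ (up to a normalization constant depending only on $n$ and $p$): for the conjectured extremals $U^H_{\lambda,x_0}$ this $v$ is a positive affine function of $H_0(x-x_0)^{p/(p-1)}$. One then rewrites the equation in \eqref{p_laplace_cono}, the conormal condition $a(\nabla u)\cdot\nu=0$, and the integrability $u\in\mathcal{D}^{1,p}(\Sigma)$ in terms of $v$. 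In parallel I would establish sharp asymptotics at infinity: since $u$ solves a locally uniformly elliptic equation away from its critical set, Moser iteration together with the known $C^{1,\alpha}$ regularity for the anisotropic $p$-Laplacian and the membership $u\in\mathcal{D}^{1,p}(\Sigma)$ should give that $u(x)$ and $|\nabla u(x)|$ decay like $H_0(x)^{-(n-p)/(p-1)}$ and $H_0(x)^{-(n-p)/(p-1)-1}$ respectively as $H_0(x)\to\infty$, with matching control on the second derivatives of $u$ off the critical set. These estimates are what will make the integrations by parts below legitimate.

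\noindent\textbf{Step 2: the main integral identity.} On $\Sigma\cap B_R$ (with $B_R$ the ball of radius $R$ about the origin) I would test the equation satisfied by $v$ against a suitable weighted combination of $v$ and its first derivatives --- a ``$P$-function'' adapted to $H$ --- and integrate by parts. Three mechanisms then combine: (a) the conormal condition satisfied by $u$ makes all the boundary integrals over $\partial\Sigma$ coming from the divergence-structure terms vanish; (b) the one boundary contribution over $\partial\Sigma$ that survives is an integral against the second fundamental form of $\partial\Sigma$, which is nonnegative since $\Sigma$ is convex, so this term has a favourable sign; (c) the decay estimates of Step 1 force the boundary integrals over $\partial B_R$ to tend to zero as $R\to\infty$. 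Using the equation to eliminate the second-order terms, one is left --- after letting $R\to\infty$ --- with an identity of the form
\[
\int_\Sigma Q[v]\,\rho\;dx\;\le\;0,
\]
where $\rho>0$ is an explicit weight (a power of $v$) and $Q[v]\ge 0$ pointwise, with $Q[v]$ measuring the failure of the ``anisotropic Hessian'' $\tfrac12 D^2(H^2)(\nabla v)\,\nabla^2 v$ of $v$ to be a scalar multiple of the identity; the pointwise inequality $Q[v]\ge0$ is a pointwise algebraic (Cauchy--Schwarz-type) inequality, and it is precisely here that the uniform convexity and $C^{1,1}$ bounds on $H^2$ in \eqref{eq:elliptic H} are used in an essential way. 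It follows that $Q[v]\equiv0$ in $\Sigma$.

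\noindent\textbf{Step 3: rigidity and identification of $x_0$.} The identity $Q[v]\equiv0$ forces $\tfrac12 D^2(H^2)(\nabla v)\,\nabla^2 v$ to be pointwise a scalar multiple of the identity, and a further inspection of the identity (or a second integration by parts) shows that this scalar is constant; integrating then gives $v(x)=\alpha\,H_0(x-x_0)^{p/(p-1)}+\beta$ for some $\alpha,\beta>0$ and $x_0\in\mathbb{R}^n$. Matching the constants with Step 1 yields $u=U^H_{\lambda,x_0}$ for a suitable $\lambda>0$. It remains to locate $x_0$: imposing the conormal condition on $\partial\Sigma$ for $H_0(\cdot-x_0)^{p/(p-1)}$ and using the splitting $\Sigma=\mathbb{R}^k\times\mathcal{C}$ with $\mathcal{C}$ containing no line, one finds that this is possible only if $x_0\in\mathbb{R}^k\times\{\mathcal{O}\}$ when $0<k<n$, only if $x_0=\mathcal{O}$ when $k=0$, and with no restriction when $k=n$ (in which case $\Sigma=\mathbb{R}^n$ and the conormal condition is empty); this gives $(i)$--$(iii)$.

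\noindent\textbf{Main obstacle.} I expect the crux of the proof to be twofold. First, establishing the asymptotics of Step 1 with enough precision --- in particular controlling the second derivatives of $u$ near infinity and handling the degeneracy and singularity of $\Delta_p^H$ on the critical set $\{\nabla u=0\}$, as well as the behaviour near the vertex of $\Sigma$ when present --- so that every boundary term in the $P$-function computation provably vanishes and the whole computation is justified (in the weak sense where needed). Second, proving the pointwise inequality $Q[v]\ge0$ and, crucially, characterizing its equality case: this is the step most sensitive to the lack of symmetry of $H$, and the structural hypotheses \eqref{eq:elliptic H} enter precisely there. By contrast, the convexity of the cone is used only to give a sign to a single boundary term, which is exactly what allows the argument to run in an arbitrary convex cone rather than only in domains with enough symmetry for the moving planes method.
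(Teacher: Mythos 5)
Your proposal follows essentially the same route as the paper: the substitution $v=u^{-p/(n-p)}$, decay estimates for $u$ and $\nabla u$, an integral identity in which the convexity of $\Sigma$ gives a favourable sign to the second-fundamental-form boundary term and the decay kills the terms on $\partial B_R$, a Newton-type pointwise inequality whose equality case forces $\nabla[a(\nabla v)]=\lambda\,\mathrm{Id}$ with $\lambda$ then shown to be constant, and finally the conormal condition to locate $x_0$. The only notable technical deviation is that where you hope for pointwise control of second derivatives at infinity, the paper instead establishes a weighted $L^{2}$ Caccioppoli-type estimate for $\nabla(a(\nabla u))$, precisely to avoid pointwise second-derivative bounds and lower bounds on $|\nabla u|$ near the degenerate set.
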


As already mentioned, case $(i)$ in Theorem \ref{thm_main} has been already proved in \cite{CGS,DamascelliSciunzi, Sc,Vetois} when $\Sigma=\mathbb{R}^n$ and $H$ is the Euclidean norm. In that case, thanks to the symmetry of the problem, the  authors can apply the method of moving planes. In the Euclidean case and for $p=2$, the classification of solutions in convex cones was proved in \cite[Theorem 2.4]{LionsPacellaTricarico} by using the Kelvin transform and inspired by \cite{Gidas}.   Unfortunately, the Kelvin transform and the method of moving planes are not helpful neither for anisotropic problems nor inside cones for a general $p \in (1,n)$. 
For this reason we provide a new approach to the characterization of solutions to critical $p-$Laplacian equations, which is based on integral identities rather than moving planes.
 This approach takes inspiration from \cite{BiCi,BCS,BNST} where classical overdetermined problems for PDEs are considered (see also \cite{CirRon,PacellaTralli} for analogous problems in convex cones).

 \subsection*{Strategy of the proof and structure of the paper} The strategy of the proof can be explained as follows. First, using that $u \in \mathcal{D}^{1,p}(\Sigma)$ we show that $u$ is bounded (see Subsection \ref{subsect_boundness}).
 Then, in Subsection \ref{subsect_asymp} we prove that $u$  satisfies certain decay estimates at infinity (in particular it behaves as the fundamental solution both from above and below), so that one has optimal upper bounds on $H(\nabla u)$ in terms of the fundamental solution.  We notice that, differently from \cite{Sc}, we do not need asymptotic lower bounds on $\nabla u$; instead, we use a Caccioppoli-type inequality to prove some asymptotic estimates on certain integrals involving higher order derivatives (see Subsection \ref{subsect_caccioppoli}). 

Then, in Section \ref{sect_proof} we consider the auxiliary function $v= u^{-\frac{p}{n-p}}$. We find the  elliptic equation satisfied by $v$ and then, thanks to the asymptotic estimates on $u$, we show that $v$ and $\nabla v$ satisfy explicit growth conditions at infinity. By using integral identities, the convexity of $\Sigma$, and some suitable inequalities, we are able to prove that $\nabla a(\nabla v)$ is a multiple of the identity matrix, from which the symmetry result follows.

In Appendix \ref{appendix_sobolev} we prove the sharp version of \eqref{sobolev_cones} for general norms and cones, and even in a weighted setting.

Most of the paper will focus on the case in which $\Sigma$ is a convex cone with nonempty boundary. Indeed our approach perfectly works also when $\Sigma = \mathbb{R}^n$. However, since the whole space case is simpler to be proven, we prefer to focus the exposition to the case when $\Sigma$ has  boundary.

\subsection*{Acknowledgments} The authors wish to thank Andrea Cianchi and Alberto Farina for useful discussions. G.C. and A.R. have been partially supported by the ``Gruppo Nazionale per l'Analisi Matematica, la Probabilit\`a e le loro Applicazioni'' (GNAMPA) of the ``Istituto Nazionale di Alta Matematica'' (INdAM, Italy). G.C. has been partially supported by the PRIN 2017 project ``Qualitative and quantitative aspects of nonlinear PDEs''. A.F. has been partially supported by European Research Council under the Grant
Agreement No 721675. Part of this manuscript was written while A.R. was visiting the Department of Mathematics of the ETH in Z\"{u}rich, which is acknowledged for the hospitality.

\section{Preliminary results}
In this section we collect some results that are well established when $\Sigma = \mathbb{R}^n$ and $H$ is the Euclidean norm. Since we are dealing with problem \eqref{p_laplace_cono} and some modifications are needed, we report here their counterpart when $\Sigma$ is a convex cone and $H$ a general norm, and provide a sketch of the proofs emphasizing the main differences.

In the whole paper we denote by $B_r(x)$ the usual Euclidean ball, and by $B_r$ the ball $B_r(\mathcal O)$ centered at the origin.

\subsection{Boundeness of solutions} \label{subsect_boundness}
In the following lemma we prove that solutions to \eqref{p_laplace_cono} are bounded. The result holds for more general Neumann problems, in particular for problems with a differential operator modelled on the $p$-Laplace operator.

\begin{lemma}\label{Lemma_Cianchi_prel}
	Let $\Sigma\subseteq \mathbb{R}^n$ be a convex cone as in \eqref{Sigma_def} and let $u\in \mathcal{D}^{1,p}(\Sigma)$ be a solution to 
	\begin{equation}\label{approx_ter}
	\begin{cases}
	\diver(a(\nabla u)) + u^{p^*-1} = 0 & \text{ in } \Sigma \\
	u>0 & \text{ in } \Sigma \\
	a(\nabla u)\cdot\nu=0 & \text{ on } \partial\Sigma\, ,
	\end{cases}
	\end{equation}
	where the $a:\mathbb{R}^n\rightarrow\mathbb{R}^n$ is a continuous vector field such that the following holds: 
	there exist $\alpha>0$ and $0\leq s\leq 1/2$ such that
	\begin{equation}\label{ellipticity_approx}
	|a(\xi)|\leq \alpha(|\xi|^2+s^2)^{\frac{p-1}{2}} \quad \text{and} \quad \xi\cdot a(\xi)\geq \dfrac{1}{\alpha}\int_0^1\left( t^2|\xi|^2+s^2\right)^{\frac{p-2}{2}}|\xi|^2\, dt  \, , 
	\end{equation}
	for every $\xi\in\mathbb{R}^n$. Then there exists $\delta>0$ with the following property: let $\rho>0$ be such that 
	$$
	||u||_{L^{p^\ast}(B_\rho(x_0))}\leq \delta\qquad \forall\,x_0 \in \mathbb R^n.
	$$
Then 
	$$
	||u||_{L^\infty(\Sigma\cap B_{R/2}(x_0))}\leq CR^{-\frac{n}{p}}||u||_{L^p(\Sigma\cap B_R(x_0))}\qquad \forall\,R\leq \rho,
	$$ 
where $C$ depends only on $n$, $\alpha$, $p$ and the Sobolev constant of $\Sigma$.
\end{lemma}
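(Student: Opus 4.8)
The plan is to run a Moser iteration, the only genuinely new ingredients being that the cone $\Sigma$ enters only through the anisotropic Sobolev inequality \eqref{sobolev_cones} of Appendix~\ref{appendix_sobolev} (which takes over the role of the Euclidean Sobolev inequality in the model case), that the operator $a$ enters only through the structural consequences of \eqref{ellipticity_approx}, and that the Neumann condition $a(\nabla u)\cdot\nu=0$ makes all boundary integrals disappear. Concretely, I would first record that, by the very definition of weak solution of \eqref{approx_ter}, one has $\int_\Sigma a(\nabla u)\cdot\nabla\varphi\,dx=\int_\Sigma u^{p^*-1}\varphi\,dx$ for every $\varphi\in\mathcal D^{1,p}(\Sigma)$ with bounded support — in particular for test functions built from $u$ times a cutoff, which need not vanish on $\partial\Sigma$, and this is the only point where the boundary of $\Sigma$ intervenes. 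Next I would note that, since $0\le s\le 1/2$, \eqref{ellipticity_approx} yields $\xi\cdot a(\xi)\ge c_1|\xi|^p-c_2$ and $|a(\xi)|\le c_3(|\xi|^{p-1}+1)$ with $c_i=c_i(\alpha,p)$ independent of $s$, i.e. exactly the coercivity and growth under which the $p$-Laplacian Caccioppoli inequalities hold.

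The critical growth of the source I would handle in the Brezis--Kato spirit. Writing $V:=u^{p^*-p}$, equation \eqref{approx_ter} becomes $\diver(a(\nabla u))+V u^{p-1}=0$, and since $(p^*-p)\frac np=p^*$ the hypothesis gives $\|V\|_{L^{n/p}(B_\rho(x_0))}=\|u\|_{L^{p^*}(B_\rho(x_0))}^{p^*-p}\le\delta^{p^*-p}$ for every $x_0$. Fixing $x_0$ and $R\le\rho$, for $\gamma\ge p$, a cutoff $\eta$ between nested balls $B_{r'}(x_0)\subset B_r(x_0)\subset B_R(x_0)$ and a truncation $m>0$, I would test with $\varphi=\eta^p u\min(u,m)^{\gamma-p}$; using the two structural bounds, Young's inequality to absorb the $\nabla\eta$-terms, and letting $m\to\infty$ a posteriori, this produces a Caccioppoli inequality for $w:=u^{\gamma/p}$ whose right-hand side is — up to a factor polynomial in $\gamma$ — the sum of $(r-r')^{-p}\int_{B_r(x_0)}w^p$, the critical term $\int_\Sigma V(\eta w)^p$, and lower-order terms (produced by the additive constants above). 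Feeding $\eta w$ into \eqref{sobolev_cones} turns this into a reverse-H\"older inequality between $\|u\|_{L^{\chi\gamma}(B_{r'}(x_0))}$ and $\|u\|_{L^\gamma(B_r(x_0))}$, with $\chi:=p^*/p=\frac n{n-p}>1$, provided $\int_\Sigma V(\eta w)^p$ is controlled.

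This control is where smallness is used, and I would use it only once. For the starting exponent $\gamma=p^*$ I would bound $\int_\Sigma V(\eta w)^p\le\|V\|_{L^{n/p}(B_R)}\|\eta w\|_{L^{p^*}(\Sigma)}^p\le\delta^{p^*-p}\|\eta w\|_{L^{p^*}(\Sigma)}^p$ and absorb it via \eqref{sobolev_cones}; since $\gamma$ is fixed here, choosing $\delta=\delta(n,p,\alpha,S_{\Sigma,H})$ small suffices, and one gains $u\in L^{\chi p^*}(B_{r'}(x_0))$ with a quantitative estimate. From then on $V\in L^{s}_{\mathrm{loc}}$ with $s=\chi p^*/(p^*-p)>n/p$ \emph{strictly}, and for all further steps smallness is no longer needed: with $t:=\frac{ps}{s-1}\in(p,p^*)$ one estimates $\int_\Sigma V(\eta w)^p\le\|V\|_{L^s}\|\eta w\|_{L^{t}}^p$, interpolates $\|\eta w\|_{L^{t}}\le\|\eta w\|_{L^{p^*}}^{\theta}\|\eta w\|_{L^p}^{1-\theta}$ with $\theta\in(0,1)$, and absorbs the $L^{p^*}$-part by Young's inequality at the price of a constant polynomial in $\gamma$. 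Iterating along $\gamma_j=\chi^j(\chi p^*)\to\infty$ on balls shrinking from $B_{3R/4}(x_0)$ to $B_{R/2}(x_0)$, the product of constants converges and yields $\|u\|_{L^\infty(B_{R/2}(x_0))}\le C\,\|u\|_{L^{\chi p^*}(B_{3R/4}(x_0))}$ up to an explicit power of $R$. Finally a standard interpolation lemma lowers the exponent on the right: using $\|u\|_{L^q(B_{r'})}\le\|u\|_{L^\infty(B_{r'})}^{1-p/q}\|u\|_{L^p(B_{r'})}^{p/q}$ on nested radii between $R/2$ and $R$ and Young's inequality to reabsorb the sup-norm, one is left with $\|u\|_{L^p(B_R(x_0))}$, and a scaling check — the whole chain of estimates is covariant under $u\mapsto R^{(n-p)/p}u(R\,\cdot)$ — fixes the exponent at $R^{-n/p}$ with $C$ depending only on $n,p,\alpha$ and $S_{\Sigma,H}$.

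I expect the main obstacle to be exactly this interplay between the critical source term and the demand that $\delta$ and $C$ be universal: smallness of $\|u\|_{L^{p^*}}$ must be spent in a single step at a fixed exponent, after which the gained integrability $V\in L^s$ with $s>n/p$ makes the rest a routine subcritical Moser iteration with polynomially growing constants. The degeneracy/singularity of $a$ (the parameter $s$, the integral form of \eqref{ellipticity_approx}) and the truncation $u\mapsto\min(u,m)$ needed to make the test functions admissible are, by contrast, handled exactly as in the Euclidean whole-space case.
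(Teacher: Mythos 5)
Your proposal is correct and follows essentially the same route as the paper: a truncated-power test function together with the structural bounds from \eqref{ellipticity_approx} and the cone Sobolev inequality \eqref{sobolev_cones}, with the smallness of $\|u\|_{L^{p^*}}$ spent once to absorb the critical term and gain integrability of $V=u^{p^*-p}$ above $L^{n/p}$, the Neumann condition eliminating all boundary terms. The only difference is cosmetic: after that first step the paper rewrites the equation as $-\diver(a(\nabla u))=f\,u^{p-1}+g$ with $f\in L^r$, $r>n/p$, and invokes Serrin's local boundedness theorem, whereas you carry out the resulting subcritical Moser iteration and the final interpolation down to the $L^p$ norm by hand.
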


\begin{proof} We closely follow \cite[Theorem E.0.20]{Peral} and \cite[Theorem 1]{Serrin_local} and we only give a sketch of the proof. We first prove that  $u\in L^{q p^\ast}_{\rm loc}(\overline\Sigma)$  for any $q <p^\ast/p.$ Given $l>0$ and $1<q<\frac{p^{\ast}}{p}$, we define
	\begin{equation}\label{F_Peral}
	F(u)=\begin{cases}
	u^{q}\ & \text{ if } u\leq l \\
	q l^{q-1}(u-l)+l^q & \text{ if } u>l  \, ,
	\end{cases}
	\end{equation}
	and 
	$$
	G(u)=\begin{cases}
	u^{(q-1)p+1}\ & \text{ if } u\leq l \\
	((q-1)p+1)l^{(q-1)p}(u-l)+ l^{(q-1)p+1} & \text{ if } u>l  \,.
	\end{cases}
	$$
	Let $\eta\in C^\infty_0(\mathbb{R}^n)$ and  use
	$$
	\xi=\eta^p G(u)
	$$
	as a test-function in \eqref{approx_ter}; then an integration by parts gives
	\begin{equation}\label{debole}
	\int_{\Sigma} {a}(\nabla u)\cdot\nabla (\eta^p G(u))\, dx=\int_{\Sigma}u^{p^{\ast}-1}\eta^p G(u)\, dx\, .
	\end{equation}
	We aim at proving that 
	\begin{equation}\label{17.30}
\begin{aligned}
c\int_{\Sigma} \eta^p G'(u)|\nabla u|^p\, dx \leq & \int_{\Sigma}\eta^{p-1}G(u)|a(\nabla u)\cdot\nabla\eta|\, dx
+\int_{\Sigma}u^{p^{\ast}-1}\eta^p G(u)\, dx \\
&+s^{p}\int_\Sigma\eta^p G'(u)\, dx 
\end{aligned}
\end{equation}
	holds for $0\leq s\leq 1/2$. We distinguish between the cases $1<p<2$ and $2\leq p<n$. 
	
	If $p\geq 2$, then \eqref{ellipticity_approx} implies
	$$
	\xi\cdot a(\xi)\geq \dfrac{1}{\alpha}|\xi|^p\,,
	$$
	and from \eqref{debole} we get
	\begin{equation*}
	\dfrac{1}{\alpha}\int_{\Sigma} \eta^p G'(u)|\nabla u|^p\, dx \leq p\int_{\Sigma}\eta^{p-1}G(u)|a(\nabla u)\cdot\nabla\eta|\, dx
	+\int_{\Sigma}u^{p^{\ast}-1}\eta^p G(u)\, dx \, ,
	\end{equation*}
	which implies \eqref{17.30}.
	
	If $1<p<2$ then \eqref{17.30} is obtained by using a more careful argument. 
	We claim that
	\begin{equation}\label{eq:claim}
	\int_0^1\left( t^2|\xi|^2+s^2\right)^{\frac{p-2}{2}}|\xi|^2\, dt \geq \frac12\left(|\xi|^p - s^p\right).
	\end{equation}
	To prove this we consider two cases. If $s > |\xi|$ then the left-hand side of \eqref{eq:claim} is negative, and so the result is clearly true. Otherwise, if $s \leq |\xi|$ then
	$$
	 t^2|\xi|^2+s^2 \leq 2|\xi|^2 \qquad \text{ for $t \in [0,1]$}, 
	$$
	and therefore
	$$
	\int_0^1\left( t^2|\xi|^2+s^2\right)^{\frac{p-2}{2}}|\xi|^2\, dt \geq \int_{0}^1\left( 2|\xi|^2\right)^{\frac{p-2}{2}}|\xi|^2\, dt = 2^{\frac{p-2}2}|\xi|^p \ge \frac12 |\xi|^p,
	$$
	that again implies \eqref{eq:claim}.
	
Thanks to \eqref{debole}, \eqref{ellipticity_approx}, and \eqref{eq:claim}, we obtain
	\begin{equation*}
	\begin{aligned}
	\dfrac{1}{2\alpha}\int_{\Sigma} \eta^p G'(u)|\nabla u|^p\, dx &\leq  p\int_{\Sigma}\eta^{p-1}G(u)|a(\nabla u)\cdot\nabla\eta|\, dx
	+\int_{\Sigma}u^{p^{\ast}-1}\eta^p G(u)\, dx \\
	&+\frac{s^p}{2}\int_\Sigma\eta^p G'(u)\, dx \, ,
	\end{aligned}
	\end{equation*}
	and the proof of \eqref{17.30} is complete.

	Note now that, by Young's inequality and  \eqref{ellipticity_approx}, for any $\epsilon\in (0,1)$ we have
	\begin{equation*}
	\begin{aligned}
	\eta^{p-1} |a(\nabla u)\cdot\nabla\eta|&\leq
	\epsilon^{\frac{p}{p-1}}u^{-1}|a(\nabla u)|^{\frac{p}{p-1}}\eta^p+\epsilon^{-p}u^{p-1}|\nabla\eta|^p\\
	&\leq C_0\epsilon^{\frac{p}{p-1}}u^{-1}(|\nabla u|^p+s^p)\eta^p+\epsilon^{-p}u^{p-1}|\nabla\eta|^p,
	\end{aligned}
	\end{equation*}
	where $C_0$ depends only on $\alpha$ and $p$.
	Thanks to this inequality and recalling \eqref{17.30}, since $G(u)\leq uG'(u)$ (note that $G$ is convex and $G(0)=0$), for any $\epsilon\in (0,1)$ we obtain 
	\begin{equation*}
	\begin{aligned}
	c\int_{\Sigma} \eta^p G'(u)|\nabla u|^p\, dx &\leq C_0\epsilon^{\frac{p}{p-1}} \int_{\Sigma} \eta^p G'(u)|\nabla u|^p\, dx
	+ (C_0+1) s^p\int_{\Sigma} \eta^p G'(u)\, dx \\
	&+\epsilon^{-p}\int_{\Sigma}G(u)u^{p-1}|\nabla\eta|^p\, dx+ \int_{\Sigma}u^{p^{\ast}-1}\eta^p G(u)\, dx\, .
	\end{aligned}
	\end{equation*}
	Hence, choosing $\epsilon$ small enough so that $C_0\epsilon^{\frac{p}{p-1}}=c/2$, we deduce that
	$$
	c'\int_{\Sigma} \eta^p G'(u)|\nabla u|^p\, dx \leq s^p\int_{\Sigma} \eta^p G'(u)\, dx \\
	+\int_{\Sigma}G(u)u^{p-1}|\nabla\eta|^p\, dx+ \int_{\Sigma}u^{p^{\ast}-1}\eta^p G(u)\, dx\, ,
	$$
	where $c'>0$ depends only on $n$, $\alpha$, and $p$.
	Using now that $G'(u)\geq c[F']^p$ and that $u^{p-1}G(u)\leq C[F(u)]^p$, we obtain  
	\begin{equation*}
	\hat c\int_{\Sigma}|\nabla (\eta F(u))|^p\, dx \\ \leq s^{p}\int_{\Sigma}\eta^pG'(u)\, dx+\int_{\Sigma}|\nabla\eta|^pF^p(u)\, dx+ \int_{\Sigma}\eta^pu^{p^{\ast}-p} F^p(u)\, dx\, .
	\end{equation*}
Hence, thanks to the Sobolev inequality \eqref{sobolev_cones}  we get
	\begin{equation}\label{post_sobolev}
	\bar c\left(\int_{\Sigma}F^{p^\ast}(u)\eta^{p^\ast}\, dx\right)^{\frac{p}{p^\ast}}  \leq s^{p}\int_{\Sigma}\eta^pG'(u)\, dx+\int_{\Sigma}|\nabla\eta|^pF^p(u)\, dx + \int_{\Sigma}\eta^pu^{p^{\ast}-p} F^p(u)\, dx \, ,
	\end{equation}
	where $\bar c>0$ depends only on $n$, $\alpha$, $p$ and the Sobolev constant for $\Sigma$.
	
	Now, choose $\delta=(\bar c/2)^{1/(p^\ast - p)}$, so that for any  $R \leq \rho$ it holds
	$$
	||u||^{p^\ast-p}_{L^{p^\ast}(B_R(x_0))}\leq \frac{\bar c}2\qquad \forall\,x_0\in \R^n.
	$$
Then, if we choose $\eta$ such that ${\rm supp}(\eta)\subset B(x_0, R)$, it follows from Holder's inequality that we can reabsorb the last term in \eqref{post_sobolev}, and we get
	$$
	\frac{\bar c}2\left(\int_{\Sigma}F^{p^\ast}(u)\eta^{p^\ast}\, dx\right)^{\frac{p}{p^\ast}}\leq s^{p}\int_{\Sigma\cap B_R(x_0)}\eta^pG'(u)\, dx+\int_{\Sigma\cap B_R(x_0)}|\nabla \eta|^pF^p(u)\, dx\, .
	$$
	Hence, taking the limit as $l\rightarrow\infty$ in the definition of $F$ and $G$, by monotone convergence we conclude
	\begin{equation*}
	\frac{\bar c}2\left(\int_{\Sigma\cap B_R(x_0)}\eta^{p^\ast}u^{q p^\ast}\, dx\right)^{\frac{p}{p^\ast}}\leq 
	s^{p}\int_{\Sigma\cap B_R(x_0)}u^{(q-1)p}\, dx+ ||\nabla \eta||^p_{\infty}\int_{\Sigma\cap B_R(x_0)}u^{q p}\, dx\, .
	\end{equation*}
	Since $qp<p^{\ast}$ it follows that the right hand side is finite, hence by the inequality above and the arbitrariness of $x_0$ we conclude that $u\in L^{q p^\ast}_{\rm loc}(\overline\Sigma)$.

Thanks to this information, we can rewrite the equation satisfied by $u$ as follows:
	$$
	-\diver(a(\nabla u))=f(x)u^{p-1}+g(x)
	$$ 
	where 
	$$
	f(x)=\begin{cases}
	0 & \text{ if } u< 1 \\
	u^{p^\ast-p} & \text{ if } u\geq 1  \, ,
	\end{cases}
	$$
	and 
	$$
	g(x)=\begin{cases}
	0 & \text{ if } u> 1 \\
	u^{p^\ast-1} & \text{ if } u\leq 1  \, .
	\end{cases}
	$$
	Since $u\in L^{q p^\ast}_{\rm loc}$ we get that $f\in L^r$ with $r>\frac{n}{p}$ and $g\in L^{\infty}$.
	Hence, as in the proof of  \cite[Theorem 1]{Serrin_local},
	a classical  Moser iteration argument yields the result.
\end{proof}

\begin{remark}
\label{rmk:Holder}{\rm 
As observed in the proof of \cite[Theorem E.0.20]{Peral}, the Moser iteration argument can also be used to show that $u$ is uniformly $C^{0,\theta}$ up to the boundary.}
\end{remark}
\subsection{Asymptotic bounds on $u$ and $\nabla u$} \label{subsect_asymp}

The main goal of this subsection is to prove Proposition \ref{Vetosi_anis} below. Proposition \ref{Vetosi_anis} is a generalization of \cite[Theorem 1.1]{Vetois} to the conical-anisotropic setting. The proof of Proposition \ref{Vetosi_anis} follows the one given in \cite{Vetois}, although the lack of smoothness of $\Sigma$ creates some nontrivial extra difficulties. 

\begin{proposition}\label{Vetosi_anis}
	Let $1<p<n$ and let $u$ be a solution to \eqref{p_laplace_cono}. Then there exist two positive constants $C_0$ and $C_1$ such that
	\begin{equation}\label{1.3-1.4_anis}
	\dfrac{C_0}{1+|x|^{\frac{n-p}{p-1}}}\leq u(x)\leq \dfrac{C_1}{1+|x|^{\frac{n-p}{p-1}}} \quad \text{and} \quad |\nabla u(x)|\leq \dfrac{C_1}{1+|x|^{\frac{n-1}{p-1}}}\, ,
	\end{equation}
	for all $x\in \Sigma$.
\end{proposition}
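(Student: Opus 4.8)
The plan is to adapt the argument of \cite{Vetois} to the present conical and anisotropic setting; throughout, set $\gamma_0:=\frac{n-p}{p-1}$, the decay exponent of the fundamental solution of $\Delta_p^H$, so that the target estimates read $u(x)\asymp|x|^{-\gamma_0}$ and $|\nabla u(x)|\lesssim|x|^{-\gamma_0-1}$. First, since $u\in\mathcal D^{1,p}(\Sigma)\subset L^{p^*}(\Sigma)$, a covering argument shows $\sup_{x_0}\|u\|_{L^{p^*}(B_\rho(x_0))}\to0$ as $\rho\to0$; choosing $\rho$ small so that Lemma \ref{Lemma_Cianchi_prel} applies and taking $R=\rho$ there gives $u(x)\le C\|u\|_{L^{p^*}(B_\rho(x))}$, which tends to $0$ as $|x|\to\infty$ since it is the tail of a convergent integral. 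Thus $u$ is bounded and vanishes at infinity. To extract a first polynomial rate I would use the scale invariance of \eqref{p_laplace_cono}: the map $u\mapsto R^{(n-p)/p}u(R\,\cdot)$ preserves the equation, the Neumann condition, and — $\Sigma$ being a cone — the domain. For $|x_0|=R$ large, set $u_R(y):=R^{(n-p)/p}u(Ry)$ and $y_0:=x_0/R$; then $\|u_R\|_{L^{p^*}(B_{1/8}(y_0)\cap\Sigma)}^{p^*}=\int_{B_{R/8}(x_0)\cap\Sigma}u^{p^*}\to0$, so arguing as in Lemma \ref{Lemma_Cianchi_prel} (and using Remark \ref{rmk:Holder}) bounds $u_R(y_0)$ by a quantity $\omega(R)\to0$, i.e. $u(x)\le\omega(|x|)|x|^{-(n-p)/p}$; since $(p^*-p)\frac{n-p}{p}=p$, this gives $u(x)^{p^*-p}\le\varepsilon(|x|)|x|^{-p}$ with $\varepsilon(r)\to0$.

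The heart of the proof is a comparison argument. By the last bound, $u$ is a variational subsolution of $\Div(a(\nabla w))+\varepsilon(|x|)|x|^{-p}w^{p-1}\ge0$ in $\{|x|>R\}\cap\Sigma$ with homogeneous Neumann data, and I would compare it with barriers built from the fundamental solution $H_0^{-\gamma_0}$ of $\Delta_p^H$. Two facts are used. First, for $\gamma<\gamma_0$ one has, by the homogeneity of $H_0$ and \eqref{eq:elliptic H}, $\Div(a(\nabla(H_0^{-\gamma})))=-\kappa(\gamma)\,H_0^{-(\gamma+1)(p-1)-1}$ with $\kappa(\gamma)>0$ and $\kappa(\gamma_0)=0$. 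Second — and this is where the geometry enters — the anisotropic flux $a(\nabla(H_0^{-\gamma}))$ is parallel to the position vector, hence tangent to $\partial\Sigma$, so $H_0^{-\gamma}$ automatically meets the Neumann condition; this is the anisotropic counterpart of the elementary Euclidean fact used for cones, and follows from the Legendre duality between $\tfrac12H^2$ and $\tfrac12H_0^2$ (it is the same mechanism making the extremal \eqref{talentiane_p_intro_H} a solution of \eqref{p_laplace_cono} on a convex cone). For $\psi=AH_0^{-\gamma}$ the exponents in $\Div(a(\nabla\psi))$ and in $\varepsilon(|x|)|x|^{-p}\psi^{p-1}$ coincide, so for $R$ large $\psi$ is a supersolution; choosing $A$ so that $\psi\ge u$ on $\partial B_R\cap\Sigma$ and applying the comparison principle on $\{|x|>R\}\cap\Sigma$ (licit because $u,\psi\to0$ at infinity) gives $u(x)\le C_\gamma|x|^{-\gamma}$ for every $\gamma<\gamma_0$. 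Inserting this back improves $\varepsilon$ to a power: $u(x)^{p^*-p}\le C|x|^{-p-\sigma}$ for some $\sigma>0$ (with $\sigma\ge1$ if $\gamma$ is taken close enough to $\gamma_0$). A second comparison — now against the bounded supersolution $\psi=A(1+H_0(x))^{-\gamma_0}$, which decays like the fundamental solution and satisfies $\Div(a(\nabla\psi))\sim-|x|^{-n-1}$, so that the $\Div$ term dominates the perturbation — yields $u(x)\le C_1|x|^{-\gamma_0}$ for $|x|$ large, hence (with boundedness) the upper bound in \eqref{1.3-1.4_anis}. The lower bound is dual: $u^{p^*-1}\ge0$ makes $u$ a positive supersolution of $\Delta_p^H w=0$ in $\{|x|>R\}\cap\Sigma$, and comparing it from below with $c\,H_0(x)^{-\gamma_0}$ (which is $\Delta_p^H$-harmonic off the origin and, as above, satisfies the Neumann condition), using the upper decay to keep the contact set bounded, gives $u(x)\ge c_0|x|^{-\gamma_0}$ for $|x|$ large.

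For the gradient bound, fix $x_0$ with $|x_0|=R$ large and rescale the ball of radius $R/2$ about $x_0$ to unit size via $w(y):=R^{\gamma_0}u(x_0+\tfrac R2y)$. By the two-sided bound on $u$, $w$ is bounded above and below by universal constants on $B_1\cap\Sigma$; using $(\gamma_0+1)(p-1)=n-1$ and the upper decay of $u$, the equation for $w$ is $-\Div(a(\nabla w))=h$ with $\|h\|_{L^\infty}\le CR^{-p/(p-1)}$. Interior $C^{1,\alpha}$ estimates for $\Delta_p^H$ (Tolksdorf/Lieberman, via \eqref{eq:elliptic H}), together with their boundary version for the Neumann problem on the convex cone $\Sigma$, bound $\|\nabla w\|_{L^\infty(B_{1/4}(y_0))}$; unscaling gives $|\nabla u(x_0)|\le CR^{-\gamma_0-1}=C|x_0|^{-(n-1)/(p-1)}$.

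The two places I expect to require real work are: (a) the compatibility of the $H_0$-barriers with the homogeneous Neumann condition on $\partial\Sigma$ — trivial in the Euclidean case, but in the anisotropic setting a genuine consequence of the duality of $H$ and $H_0$ and of the conical geometry; and (b) the non-smoothness of $\partial\Sigma$ along the edge $\mathbb R^k\times\{\mathcal O\}$, which is the source of the ``nontrivial extra difficulties'' in the pointwise gradient estimate of the last step, and which is dealt with by using that $\Sigma$ is a cone (so the rescalings create no new geometry) together with the regularity theory for the Neumann problem on convex cones.
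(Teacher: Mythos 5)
Your route is genuinely different from the one in the paper: instead of the doubling lemma of \cite{PQS} plus a blow-up/compactness argument against the mass lower bound (Step 1), the Lorentz-space estimate $u\in L^{\hat p-1,\infty}$ (Step 2) and the Harnack-type estimates of \cite{Trudinger,Serrin_local} on rescaled annuli (Step 3), you propose an $\varepsilon$-regularity bound at scale $R$ (using that the tail of $\|u\|_{L^{p^*}}$ is small) followed by a two-stage comparison with explicit anisotropic barriers built from $H_0^{-\gamma}$. The first part is a legitimate simplification, and your exponent bookkeeping in the barrier stages is correct (in particular the need for the ``eigenvalue-type'' first stage: treating $u^{p^*-1}$ as a known right-hand side with the decay $|x|^{-(n-p)/p}$ leads to an iteration whose fixed point is again $(n-p)/p$, so the smallness of $\varepsilon(|x|)$ must enter through the potential). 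But precisely there lies the main gap: the ``comparison principle'' you invoke for $\Div(a(\nabla w))+\varepsilon(|x|)|x|^{-p}w^{p-1}=0$ on $\{|x|>R\}\cap\Sigma$ is not a standard fact. Because of the zeroth-order term with the ``wrong'' sign, the operator is not monotone and sub/supersolution comparison can fail (this is exactly the mechanism behind nonuniqueness in eigenvalue-type problems); it can be rescued here only by exploiting that $\varepsilon$ is small relative to a Hardy-type constant of the exterior cone with Neumann lateral conditions, via a Picone/D\'iaz--Sa\'a-type argument combined with cut-offs at infinity and the degeneracy of $a(\cdot)$. That lemma is the real content of your replacement for Steps 2--3 and is left entirely unproved; the justification you give (``licit because $u,\psi\to0$ at infinity'') addresses only the unboundedness of the domain, not the non-monotonicity. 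Your second comparison (against $A(1+H_0)^{-\gamma_0}$, with the nonlinearity treated as a known right-hand side of order $|x|^{-n-1-\sigma}$) and the lower bound are fine modulo standard truncation arguments, and are close in spirit to the paper's treatment of the lower bound.

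The second genuine gap is the gradient estimate. After rescaling you appeal to ``interior $C^{1,\alpha}$ estimates (Tolksdorf/Lieberman) together with their boundary version for the Neumann problem on the convex cone $\Sigma$''. Boundary $C^{1,\alpha}$ (or even Lipschitz) regularity for conormal problems requires smoothness of the boundary; $\partial\Sigma$ is merely convex, with edges along $\mathbb R^k\times\{\mathcal O\}$ and possibly a genuinely nonsmooth cross-section, and the rescaled domains near a lateral boundary point are convex Lipschitz sets, not smooth ones. This is precisely the point the paper spends effort on: the bound \eqref{4.9_anis} is obtained by approximating $\Sigma$ with smooth convex cones and using that the boundary term produced in the differentiated (Caccioppoli-type) estimate has a sign because the second fundamental form of $\partial\Sigma$ is nonnegative, in the spirit of \cite{CM_Global} (compare \eqref{int2}--\eqref{int4}). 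You flag this as needing ``real work'', which is honest, but no mechanism is supplied, and quoting \cite{Tolksdorf,Lieberman} does not cover it. A smaller caveat, which the paper itself glosses over: since $H$ is not assumed symmetric, the radiality of the flux $a(\nabla(H_0^{-\gamma}))$ (hence the Neumann compatibility of your barriers) uses the duality identity with the correct orientation; for a decreasing profile one should work with $\zeta\mapsto H_0(-\zeta)$ or verify the identity $\nabla H(-\nabla H_0(x))\parallel x$, which fails for general non-even $H$ unless the signs are set up consistently.
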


Before giving the proof of Proposition \ref{Vetosi_anis}, we first introduce a useful definition.

\begin{definition}
Given $L>0$, we say that a convex cone $\mathcal C$ is $L$-Lipschitz if for any point $x \in \partial \mathcal C$ there exist $r_x>0$ and a unit vector $\nu_x$ such that
$$
B_{r_x}(x+Lr_x \nu_x)\subset \mathcal C.
$$
Note that, by convexity of $\mathcal C$, also the convex hull of $B_{r_x}(x+L\nu_x)\cup\{x\}$ is contained in $\mathcal C$.
\end{definition}

In the spirit of \cite[Lemma 2.3]{Vetois}, we now prove a general lower bound on the $L^{p^*}$ norms of solutions to our equation in convex cones, with a bound depending only on the Lipschitz constant (see also \cite{LionsPacellaTricarico}).

\begin{lemma}[Lower bound on the mass] \label{lem:lower bound} Let $u$ be a nontrivial solution to 
\begin{equation}\label{p_laplace_cono_i}
\begin{cases}
\diver(a(\nabla u))  + u^{p^*-1} = 0 & \text{ in } \mathcal C \\
u>0 & \text{ in } \mathcal C \\
a(\nabla u) \cdot \nu =0 & \text{ on } \partial\mathcal C  \\
u \in \mathcal{D}^{1,p}(\mathcal C) \,,& 
\end{cases}
\end{equation}
where $\mathcal C$ is a $L$-Lipschitz convex cone and $a(\xi)$ is as in \eqref{a_def}.
Then there exists a constant $k_0>0$, depending only on $n$, $p$, $L$, and $\min_{\mathbb S^{n-1}}H$, such that 
$$
\|u\|_{L^{p^*}(\mathcal C)}\geq k_0.
$$
\end{lemma}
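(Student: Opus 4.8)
The goal is a uniform-in-scale lower bound on $\|u\|_{L^{p^*}(\mathcal C)}$. The plan is to combine the boundedness result (Lemma \ref{Lemma_Cianchi_prel}) with the equation and the Sobolev inequality \eqref{sobolev_cones} to produce a "small mass forces trivial solution" dichotomy. First I would fix $\delta>0$ as in Lemma \ref{Lemma_Cianchi_prel} and argue by contradiction: suppose $\|u\|_{L^{p^*}(\mathcal C)}$ is very small (smaller than some threshold $k_0$ to be chosen). Then in particular $\|u\|_{L^{p^*}(B_\rho(x_0))}\le \delta$ for every $x_0$, for every radius $\rho$, so Lemma \ref{Lemma_Cianchi_prel} applies at every scale $R$. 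Letting $R\to\infty$ in the estimate $\|u\|_{L^\infty(\mathcal C\cap B_{R/2}(x_0))}\le C R^{-n/p}\|u\|_{L^p(\mathcal C\cap B_R(x_0))}\le C R^{-n/p}\,|B_R|^{1/p-1/p^*}\|u\|_{L^{p^*}(\mathcal C)}$, and noting that $R^{-n/p}|B_R|^{1/p-1/p^*}$ is a fixed power of $R$ with a favorable (nonpositive) exponent since $1/p-1/p^* = 1/n$ gives $|B_R|^{1/n}\sim R$, hence $R^{-n/p}R = R^{1-n/p}\to 0$ as $1<p<n$ forces $p<n$... one must check the exponent carefully, but the point is that $u\equiv 0$, contradicting nontriviality. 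Actually the cleaner route avoids letting $R\to\infty$:

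**Cleaner route via the equation and Sobolev.** Test the equation \eqref{p_laplace_cono_i} with $u$ itself (legitimate since $u\in\mathcal D^{1,p}(\mathcal C)$, using the Neumann condition $a(\nabla u)\cdot\nu=0$ to kill the boundary term): integration by parts gives
\begin{equation*}
\int_{\mathcal C} \nabla u\cdot a(\nabla u)\,dx = \int_{\mathcal C} u^{p^*}\,dx.
\end{equation*}
Since $a(\xi)=H^{p-1}(\xi)\nabla H(\xi)$ and $\xi\cdot\nabla H(\xi)=H(\xi)$ by one-homogeneity of $H$, the left side equals $\int_{\mathcal C} H(\nabla u)^p\,dx$. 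Therefore $\int_{\mathcal C} H(\nabla u)^p = \int_{\mathcal C} u^{p^*}$. Now apply the Sobolev inequality \eqref{sobolev_cones}:
\begin{equation*}
\|u\|_{L^{p^*}(\mathcal C)}^p \le S_{\mathcal C,H}^p\,\|H(\nabla u)\|_{L^p(\mathcal C)}^p = S_{\mathcal C,H}^p\,\|u\|_{L^{p^*}(\mathcal C)}^{p^*}.
\end{equation*}
Since $u$ is nontrivial, $\|u\|_{L^{p^*}(\mathcal C)}>0$, and we may divide to get $\|u\|_{L^{p^*}(\mathcal C)}^{p^*-p}\ge S_{\mathcal C,H}^{-p}$, i.e. $\|u\|_{L^{p^*}(\mathcal C)}\ge S_{\mathcal C,H}^{-p/(p^*-p)}$. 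It remains only to bound $S_{\mathcal C,H}$ above (equivalently bound the constant below) by something depending only on $n,p,L,\min_{\mathbb S^{n-1}}H$. For this I would use the $L$-Lipschitz property: since for each boundary point $\mathcal C$ contains a cone of fixed aperture (the convex hull of $B_{r_x}(x+Lr_x\nu_x)\cup\{x\}$), one can fit a fixed fraction of a ball inside $\mathcal C$ at every scale, so the isoperimetric/Sobolev constant of $\mathcal C$ is controlled by that of such a model cone, which depends only on $n,p,L$; and $\|H(\nabla u)\|_{L^p}\ge (\min_{\mathbb S^{n-1}}H)\|\nabla u\|_{L^p}$ converts the anisotropic Sobolev constant into the Euclidean one up to the factor $\min_{\mathbb S^{n-1}}H$. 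Set $k_0 := (\min_{\mathbb S^{n-1}}H)^{p/(p^*-p)} S_{\mathcal C}^{-p/(p^*-p)}$ with $S_{\mathcal C}$ the Euclidean Sobolev constant of the model $L$-Lipschitz cone.

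**Main obstacle.** The genuinely delicate point is making the dependence of the Sobolev constant on the cone $\mathcal C$ \emph{quantitative and uniform} — i.e., showing $S_{\mathcal C,H}$ (or the relevant Euclidean Sobolev constant of $\mathcal C$) is bounded above solely in terms of $n,p$ and the Lipschitz constant $L$, with no dependence on finer geometric features of $\mathcal C$. The natural tool is a uniform extension or a direct truncation argument: given $u\in\mathcal D^{1,p}(\mathcal C)$, one wants to compare with the full-space Sobolev inequality, and the $L$-Lipschitz hypothesis provides exactly the uniform interior-cone condition needed to build a bounded extension operator $\mathcal D^{1,p}(\mathcal C)\to \mathcal D^{1,p}(\mathbb R^n)$ with operator norm depending only on $n,p,L$ (reflection-type extension across the Lipschitz boundary, scale-invariant because $\mathcal C$ is a cone). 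Alternatively one can invoke the sharp cone Sobolev inequality from Appendix \ref{appendix_sobolev}, whose constant is explicit in terms of the solid angle $|\omega|$, and then bound $|\omega|$ from below by a constant depending only on $L$ (an $L$-Lipschitz cone contains a spherical cap of definite size). Everything else is routine: the integration by parts needs the decay of $u$ and $\nabla u$, but at this stage of the paper that is not yet available, so one should instead justify $\int \nabla u\cdot a(\nabla u) = \int u^{p^*}$ directly from $u\in\mathcal D^{1,p}$ by an approximation with cutoffs $\eta_R$ and the dominated convergence theorem, using $\nabla u\in L^p$, $u\in L^{p^*}$, and $|a(\nabla u)|\le \Lambda^{(p-1)/2}|\nabla u|^{p-1}\in L^{p/(p-1)}$.
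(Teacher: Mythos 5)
Your proposal is correct in substance and shares the skeleton of the paper's proof: both arguments rest on (a) testing the equation with $u$ (the Neumann condition killing the boundary term, and $\xi\cdot a(\xi)=H^p(\xi)$ by homogeneity) to get $\int_{\mathcal C}H(\nabla u)^p=\int_{\mathcal C}u^{p^*}$, and (b) a Sobolev inequality on $\mathcal C$ whose constant depends only on $n$, $p$, $L$ and $\min_{\mathbb S^{n-1}}H$, which then yields $\|u\|_{L^{p^*}(\mathcal C)}\geq k_0$ after dividing. Where you diverge is in how the uniformity in $L$ of the Sobolev constant is justified, and this is indeed the only delicate point. The paper does it by introducing the auxiliary infimum $\mathcal S_L$ taken over all convex domains $\Omega$ pinched between $B_1\cap\hat{\mathcal C}_L$ and $B_1$, with Dirichlet condition only on $\partial B_1\cap\hat{\mathcal C}_L$; positivity of $\mathcal S_L$ follows from the uniform Lipschitz character of this family by direct methods, and the scale invariance of the quotient (rescaling a compactly supported test function into $B_1$) transfers the bound to all of $\mathcal D^{1,p}(\mathcal C)$. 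Your first route, a reflection/Stein extension $\mathcal D^{1,p}(\mathcal C)\to\mathcal D^{1,p}(\mathbb R^n)$, needs two repairs: standard extension theorems bound the full $W^{1,p}$ norm, so you must exploit the dilation invariance of the cone (or a dilation-equivariant construction) to discard the zero-order term and retain a purely homogeneous gradient bound; and your phrase ``one can fit a fixed fraction of a ball inside $\mathcal C$ at every scale, so the Sobolev constant of $\mathcal C$ is controlled by that of a model cone'' is too quick, since domain monotonicity of Neumann-type Sobolev constants under inclusion is false for general domains and for cones it is a consequence of the sharp-constant formula rather than of containment alone. Your second route --- invoking the sharp cone Sobolev inequality of Appendix \ref{appendix_sobolev}, whose optimal constant is explicit in the solid angle, and bounding the angle below in terms of $L$ --- is exactly the alternative proof the paper records in the remark following the lemma, and it does supply the missing monotonicity; with that route (and your cutoff justification of the integration by parts, which is fine since $u\in L^{p^*}$, $|a(\nabla u)|\lesssim|\nabla u|^{p-1}\in L^{p/(p-1)}$), the argument is complete.
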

\begin{proof}
As in \cite[Lemma 2.3]{Vetois}, the proof is based on the Sobolev inequality in $\mathcal C$, and
on the integral identity that one obtains by multiplying \eqref{p_laplace_cono_i} by $u$ and integrating in $\mathcal C$.
However in this case a bit more carefulness is needed, especially to quantify the dependencies.

First of all, up to a translation, we can assume that $\mathcal C$ has vertex at $O$.
Then, since $\mathcal C$ is $L$-Lipschitz, there exist $r_0>0$ and a unit vector $\nu_0$ such that
$B_{r_0}(Lr_0\nu_0)\subset \mathcal C.$
Therefore, since $\mathcal C$ is a convex cone, this implies that the cone
$$
\hat {\mathcal C}_L:=\bigcup_{r>0}B_r(Lr\nu_0)
$$
is contained inside $\mathcal C$.

We now want to estimate the Sobolev constant of $\mathcal C$.
To this aim we define the following constant:
$$
\mathcal S_L:=\inf \Biggl\{\frac{\left(\int_{\Omega} |\nabla \varphi|^p dx\right)^{1/p}}{\left( \int_\Omega |\varphi|^{p^*}dx\right)^{1/p^*}}\,:\, \text{$\Omega$ is convex, $B_1\cap \hat {\mathcal C}_L \subset \Omega\subset B_1$, $\varphi \in C^1(\Omega)$, $\varphi|_{\partial B_1\cap \hat {\mathcal C}_L}=0$} \Biggr\}.
$$
Since the set of convex domains $\Omega\subset B_1$ containing $B_1 \cap \hat {\mathcal C}_L$ are uniformly Lipschitz, standard arguments in the calculus of variations show that $\mathcal S_L$ is positive.

We now notice that, given any function $\psi \in C^1_c(\overline{\mathcal C})$, there exists $\lambda>0$ large such that $\psi_\lambda(x):=\psi(\lambda x)$ satisfies
$\psi_\lambda \in C^1(\mathcal C)$ and $\psi_\lambda|_{\partial B_1\cap \hat {\mathcal C}_L}=0$ (since $\partial B_1\cap \hat {\mathcal C}_L\subset \partial B_1\cap {\mathcal C}$). Hence, we can bound
$$
\frac{\left(\int_{\mathcal C} |\nabla \psi|^p dx\right)^{1/p}}{\left( \int_{\mathcal C} |\psi|^{p^*}dx\right)^{1/p^*}}=\frac{\left(\int_{\mathcal C} |\nabla \psi_\lambda|^pdx \right)^{1/p}}{\left( \int_{\mathcal C} |\psi_\lambda|^{p^*}dx\right)^{1/p^*}}\geq \mathcal S_L.
$$
Since $\psi \in C^1_c(\overline{\mathcal C})$ is arbitrary, it follows by approximation that
$$
\left(\int_{\mathcal C} |\nabla \psi|^pdx \right)^{1/p} \geq \mathcal S_L \left( \int_{\mathcal C} |\psi|^{p^*}dx\right)^{1/p^*} \qquad \forall\,\psi \in \mathcal{D}^{1,p}(\mathcal C).
$$
Applying this inequality to $u$ and defining $c_H:=\min_{|\xi|=1}H(\xi)$, we get
$$
\int_{\mathcal C}H(\nabla u)^p dx \geq c_H^p\int_{\mathcal C}|\nabla u|^p dx\geq (c_H\mathcal S_L)^p\left( \int_{\mathcal C} u^{p^*}dx\right)^{p/p^*}.
$$
On the other hand, multiplying \eqref{p_laplace_cono_i} by $u$ and integrating in $\mathcal C$, we get
$$
\int_{\mathcal C}H(\nabla u)^p dx=\int_{\mathcal C} u^{p^*}dx.
$$
Combining the last two equations yield the desired lower bound.
\end{proof}

\begin{remark}
{\rm 
An alternative proof of Lemma \ref{lem:lower bound} can be obtained by computing the optimal Sobolev constant of $\mathcal C$ (using Appendix A) and noticing that this constant is bounded below in terms only of $n$, $p$, $H_0$, and the volume of $\mathcal C\cap B_1$. In particular, whenever $\mathcal C$ is $L$-Lipschitz then $\hat{\mathcal C}_L \subseteq \mathcal C $ and 
$|\mathcal C\cap B_1|\geq |\hat{\mathcal C}_L\cap B_1|$, and one concludes that the Sobolev constant of $\mathcal C$ is controlled by (actually, it is larger or equal than) the one of $\hat{\mathcal C}_L$.}
\end{remark}

We shall also need  a doubling-type property on $u$
 which is proved in \cite[Lemma 5.1]{PQS} (see also \cite[Lemma 3.1]{Vetois}). Below we state a version of this doubling property which is suitable for our setting.
 
 Note that, by convexity, there exists a constant $L_\Sigma>0$ such that $\Sigma$ is $L_\Sigma$-Lipschitz. Then we let $k_0>0$ be the constant provided by Lemma \ref{lem:lower bound} with $L=L_\Sigma$.

\begin{lemma}[Doubling property \cite{PQS}] \label{doubling_anis} Let $u$ be a solution to \eqref{p_laplace_cono_i},
let $L_\Sigma$ be the Lipschitz constant of $\Sigma$, and let $k_0>0$ be the constant provided by Lemma \ref{lem:lower bound} with $L=L_\Sigma$.

 Let $k\in(0,k_0)$, $r>0$, and $r'\in(0,r)$ be fixed, and set 
		$$
		r''=\dfrac{r+r'}{2}\, .
		$$	
		Then for any $x\in\overline{\Sigma}\setminus B_{r''}$ and $\alpha>0$ such that the distance $d$ between $x$ and $\Sigma\cap B_{r''}$ satisfies
		\begin{equation}
		d(x,\Sigma\cap B_{r''})u(x)^{\frac{p}{n-p}}>2\alpha\, ,
		\end{equation}
		there exists a point $y_0\in\overline{\Sigma}\setminus B_{r''}$ such that
		\begin{equation}
		d(y_0,\Sigma\cap B_{r''})u(x)^{\frac{p}{n-p}}>2\alpha\, , \quad u(x_0)\leq u(y_0)\, ,
		\end{equation}
		and 
		\begin{equation}
		u(y)\leq 2^{\frac{n-p}{p}}u(y_0)\quad \text{for all } \, y\in \Sigma\cap B_{\bar r}(y_0)\, ,
		\end{equation}
		where $\bar r=\alpha u(y_0)^{-\frac{p}{n-p}}$.
	\end{lemma}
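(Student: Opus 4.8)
The plan is to deduce the doubling property for our Neumann problem \eqref{p_laplace_cono_i} from the abstract doubling lemma \cite[Lemma 5.1]{PQS}, by packaging the relevant quantities appropriately. Recall that the abstract statement says: if $D$ is a (locally closed) set in a metric space, $M:D\to (0,\infty)$ is continuous, $\Gamma\subset D$ is closed, and $x\in D$ satisfies $\mathrm{dist}(x,\Gamma)M(x)>2\alpha$ for some $\alpha>0$, then there is $y_0\in D$ with $\mathrm{dist}(y_0,\Gamma)M(y_0)>2\alpha$, $M(x)\le M(y_0)$, and $M(y)\le 2M(y_0)$ for all $y\in D$ with $\mathrm{dist}(y,y_0)\le \alpha/M(y_0)$. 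I would apply this with $D=\overline\Sigma\setminus B_{r''}$, $\Gamma=\overline\Sigma\cap \overline B_{r''}$ (or rather $\overline\Sigma\setminus B_{r''}$ meeting the closed ball — one must be slightly careful that $D$ is locally closed, which holds since $\overline\Sigma$ is closed and we remove an open ball), and the crucial choice $M(x):=u(x)^{\frac{p}{n-p}}$. With this choice, $\mathrm{dist}(x,\Gamma)M(x)=d(x,\Sigma\cap B_{r''})u(x)^{\frac{p}{n-p}}$, so the hypothesis $d(x,\Sigma\cap B_{r''})u(x)^{\frac{p}{n-p}}>2\alpha$ is exactly the hypothesis of the abstract lemma, and the conclusions $d(y_0,\cdot)u(y_0)^{\frac{p}{n-p}}>2\alpha$ (note $M(x)\le M(y_0)$ upgrades the $u(x)$ in the statement to the correct inequality), $u(x)\le u(y_0)$, and $u(y)^{\frac{p}{n-p}}\le 2\,u(y_0)^{\frac{p}{n-p}}$ on $\Sigma\cap B_{\bar r}(y_0)$ with $\bar r=\alpha u(y_0)^{-\frac{p}{n-p}}$ translate verbatim into the claimed bounds (raising to the power $\frac{n-p}{p}$ turns the factor $2$ into $2^{\frac{n-p}{p}}$).

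The one genuine point requiring justification — and the place where the proof is not purely formal — is that $M=u^{p/(n-p)}$ is continuous and finite on $D$, i.e. that $u$ is continuous and locally bounded up to $\partial\Sigma\setminus B_{r''}$. This is where I would invoke the earlier results: Lemma \ref{Lemma_Cianchi_prel} gives local $L^\infty$ bounds (hence finiteness of $M$) once $\|u\|_{L^{p^*}}$ is small on small balls, which holds automatically since $u\in L^{p^*}(\Sigma)$, and Remark \ref{rmk:Holder} gives uniform $C^{0,\theta}$ regularity up to the boundary, hence continuity of $u$ and of $M$ on $\overline\Sigma$. So the abstract lemma applies.

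The role of Lemma \ref{lem:lower bound} and the constant $k_0$ is a separate ingredient used later (it enters when one picks the parameters $k,r',r$ to run the blow-up/contradiction argument in the proof of Proposition \ref{Vetosi_anis}); for the doubling statement itself all that is needed is the continuity of $u$ up to the boundary and the abstract Lemma of \cite{PQS}. I would therefore structure the proof as: (1) recall the abstract doubling lemma and the choice $D,\Gamma,M$; (2) verify $M$ is continuous and finite on $D$ using Lemma \ref{Lemma_Cianchi_prel} and Remark \ref{rmk:Holder}; (3) read off the three conclusions, rewriting the power $2$ as $2^{\frac{n-p}{p}}$. The main (and only real) obstacle is the regularity-up-to-the-boundary issue in step (2), since the cone $\Sigma$ is only Lipschitz and not smooth — but this has already been handled in Subsection \ref{subsect_boundness}, so here it is just a matter of citing it correctly; the rest is a direct translation.
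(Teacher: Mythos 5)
Your proposal is correct and takes essentially the same route as the paper, which does not reprove the lemma but simply invokes the abstract doubling lemma of \cite[Lemma 5.1]{PQS} with $D=\overline\Sigma\setminus B_{r''}$, $\Gamma=\overline\Sigma\cap B_{r''}$, and $M=u^{\frac{p}{n-p}}$, the needed local boundedness/continuity of $M$ up to the boundary being exactly what Lemma \ref{Lemma_Cianchi_prel} and Remark \ref{rmk:Holder} provide. Your reading of the statement's $u(x)$ and $u(x_0)$ as misprints for $u(y_0)$ and $u(x)$ is the intended one, as confirmed by how the lemma is applied in \eqref{3.6_anis}.
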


\begin{proof}[Proof of Proposition \ref{Vetosi_anis}]
	We divide the proof of Proposition \ref{Vetosi_anis} in three steps. In Step 1 we give a preliminary decay estimate on $u$ (which is not sharp). In Step 2 we prove that $u \in L^{\hat p-1,\infty}(\Sigma)$ for a suitable $\hat{p}$. Finally, in Step 3 we prove \eqref{1.3-1.4_anis}.

	\medskip

\noindent $\bullet$
\emph{Step 1:} {\it Let $u$ be a solution of \eqref{p_laplace_cono}, and for $k \in (0,k_0)$  define 
		\begin{equation}
		r_k(u):=\inf\lbrace r>0\, :\, ||u||_{L^{p^{\ast}}(\Sigma\setminus B_{r})}<k\rbrace \,.
		\end{equation}
		Then, for any fixed $k\in(0,k_0)$ and $r>r_k(u)$, there exists a constant $K_0$ such that 
		\begin{equation}\label{lorenzino}
		|u(x)|\leq K_0 H_0(x)^{\frac{p-n}{p}} \quad \textit{ for all }\, x\in \overline{\Sigma}\setminus B_{r}\, .
		\end{equation}
	}

In order to prove the assertion, it suffices to show the existence of a constant $K_1$ such that 
	\begin{equation}\label{by_contradiction_anis}
	d(x,\Sigma\cap B_{r''})u(x)^{\frac{p}{n-p}}\leq K_1 \quad \text{for all } x\in\overline{\Sigma}\setminus B_{r}\,,
	\end{equation}
	where $r''=(r+r')/2$ and $r'\in (0,r)$ is fixed.
	We prove \eqref{by_contradiction_anis} by contradiction.
	
	Suppose there exists a sequence of points $\lbrace x_\alpha\rbrace_{\alpha\in\mathbb{N}}\subset \overline{\Sigma}\setminus B_{r}$ such that
	\begin{equation}\label{3.5_anis}
	d(x_\alpha,\Sigma\cap B_{r''})u(x_\alpha)^{\frac{p}{n-p}}> 2\alpha\, . 
	\end{equation}
	Since $B_{r''}\subset B_{r}$, it follows from \eqref{3.5_anis} and Lemma \ref{doubling_anis} that there exists a sequence of points $\lbrace y_\alpha\rbrace_{\alpha\in\mathbb{N}}\subset \overline{\Sigma}\setminus B_{r''}$ such that
	\begin{equation}\label{3.6_anis}
	d(y_\alpha,\Sigma\cap B_{r''})u(y_\alpha)^{\frac{p}{n-p}}> 2\alpha\, , \quad u(x_\alpha)\leq u(y_\alpha)\, ,
	\end{equation}
	and 
	\begin{equation}\label{3.7_anis}
	u(y)\leq 2^{\frac{n-p}{p}}u(y_\alpha) \quad \text{for all } \, y\in \Sigma\cap B_{\bar r}(y_\alpha) \, .
	\end{equation}
	We observe that, since  $u$ is bounded, the sequences $\{ x_\alpha\}_{\alpha\in\mathbb{N}}$ and $\{ y_\alpha\}_{\alpha\in\mathbb{N}}$ are both divergent as $\alpha \to \infty$.

	For any $\alpha\in\mathbb{N}$ and $y\in\overline{\Sigma}$, we define
	\begin{equation}\label{3.8_anis}
	\tilde{u}_\alpha(y):=u(y_\alpha)^{-1} u(m_\alpha^{-1} y+y_\alpha)\, 
	\end{equation}
	where $m_\alpha:=u(y_\alpha)^{\frac{-p}{n-p}}$.
	From \eqref{p_laplace_cono} we obtain 
	\begin{equation}\label{eq_tilde_u_vera}
	\begin{cases}
	-\Delta^H_p \tilde{u}_\alpha= \tilde{u}_\alpha^{p^\ast-1} & \text{ in } \Sigma_\alpha \\
	\tilde u_\alpha(\mathcal O)=1,\\
	a(\nabla \tilde{u}_\alpha)\cdot \nu=0 & \text{ on } \partial\Sigma_\alpha\, ,
	\end{cases}
	\end{equation}
	where 
	$$
	\Sigma_\alpha:=m_\alpha(\Sigma-y_\alpha)=\{y \in \R^n\,:\,m_\alpha^{-1} y+y_\alpha \in \Sigma\}
	$$
	is a convex cone.
	
	It is immediate to check that  the cones $\Sigma_\alpha$
	are $L_\Sigma$-Lipschitz.
	Furthermore, if we set $\mu_\alpha:=u(y_\alpha)^{-1}$, \eqref{3.7_anis} and \eqref{3.8_anis} yield that 
	\begin{equation}\label{3.11_vera}
	\tilde{u}_\alpha(-y_\alpha m_\alpha)=\mu_\alpha u(\mathcal O)\neq 0 \ \  \text{ and } \ \  \tilde{u}_\alpha(y)\leq 2^{\frac{n-p}{p}} \quad \text{ for all } \, y\in \Sigma_\alpha\cap B_{\alpha}\,.
	\end{equation}
	At this point we consider the ratio 
	$$
	q_\alpha:=\dfrac{m_\alpha}{|y_\alpha|}\,. 
	$$
Observe that (by \eqref{3.6_anis})  $q_\alpha\rightarrow 0$ as $\alpha\rightarrow\infty$. 
	
	Since $|y_\alpha| \to + \infty $, the ratio between $-y_\alpha m_\alpha$  and the scaling factor $ m_\alpha$ goes to infinity.
	Hence, one of the following two cases may occur  as $\alpha\rightarrow\infty$ : 
	\begin{itemize}
		\item[$(i)$] the sequence of cones $\{\Sigma_\alpha\}_{\alpha\in\mathbb{N}}$ converges to $\mathbb{R}^n$ (this happens if the distance between $m_\alpha y_\alpha$ and $\partial \Sigma_\alpha$ goes to infinity);
		\item[$(ii)$] the sequence of cones $\{\Sigma_\alpha\}_{\alpha\in\mathbb{N}}$ converges to a $L_\Sigma$-Lipschitz convex cone $\mathcal C$, not necessarily centered at the origin (this happens if the distance between $m_\alpha y_\alpha$ and $\partial \Sigma_\alpha$ remains bounded).
	\end{itemize}
We now look in both cases at the behavior of the functions  $\{u_\alpha\}_{\alpha\in\mathbb{N}}$. We consider the two cases separately.
	
- Case $(i)$: fix  a ball $B_{R}$. Then there exists $\overline{\alpha}\in\mathbb{N}$ such that 
	$\Sigma_\alpha\cap B_{R}=B_{R}$ for every $\alpha\geq\overline{\alpha}$; moreover $\tilde{u}_\alpha$ (for every $\alpha\geq\overline{\alpha}$) is a solution of \eqref{eq_tilde_u_vera} in $B_{R}$.
	From \eqref{eq:elliptic H}, \eqref{3.11_vera}, and \cite{DiBenedetto}, there exist a constant $C>0$ and a real number $\theta\in(0,1)$ such that
	\begin{equation}\label{3.12_vera}
	||\tilde{u}_\alpha||_{C^{1,\theta}(B_{R/2})}\leq C
	\end{equation}
	for any $\alpha\geq\overline{\alpha}$. Since $R>0$ is arbitrary,  Ascoli-Arzel\`a Theorem 
	and a diagonal argument imply that $\{\tilde{u}_\alpha\}_{\alpha\in\mathbb{N}}$ converges (up to subsequence) in $C^1_{\mathrm{\rm loc}}(\mathbb{R}^n)$ to some function $\tilde{u}_\infty$. By construction we have that $\tilde{u}_\infty\in\mathcal D^{1,p}(\mathbb{R}^n)$, $\tilde{u}_\infty(\mathcal O)=1$, and $\tilde{u}_\infty$ is a weak solution of 
	\begin{equation}\label{3.13_vera}
	-\Delta^H_p \tilde{u}_\infty=  \, \tilde{u}_\infty^{p^\ast-1} \quad \text{ in } \, \mathbb{R}^n\, .
	\end{equation}

	- Case $(ii)$: consider a ball $B_{R}$. Then for every compact set $K\subset\subset B_{R}\cap \mathcal C$  there exists $\overline{\alpha}\in\mathbb{N}$ such that $K\subset\Sigma_\alpha\cap B_R$ for every $\alpha\geq\overline{\alpha}$. As in Case $(i)$, for every $\alpha\geq\overline{\alpha}$ the function $\tilde{u}_\alpha$ is a solution of \eqref{eq_tilde_u_vera} in $K$, and there exist a constant $C>0$ and a real number $\theta\in(0,1)$ such that
	\begin{equation}\label{3.12_vera_bis}
	||\tilde{u}_\alpha||_{C^{1,\theta}(K')}\leq C
	\end{equation}
	for any $\alpha\geq\overline{\alpha}$ and $K'\subset\subset K$. In addition, it follows by Remark \ref{rmk:Holder} that the functions $\tilde u_\alpha$ are uniformly $C^{0,\theta}$ inside $B_{R}\cap \overline{\mathcal C}$ for any $R>0$. 
	Hence, again Ascoli-Arzel\`a Theorem
	and a diagonal argument imply that $\{\tilde{u}_\alpha\}_{\alpha\in\mathbb{N}}$ converges (up to subsequence) in $C^0(B_{R}\cap \overline{\mathcal C})\cap C_{\rm loc}^1(B_{R}\cap \mathcal C)$ to some function $\tilde{u}_\infty$, for any $R>0$.  Taking the limit in the weak formulation of the equation, we obtain that $\tilde{u}_\infty\in \mathcal D^{1,p}(\mathcal C)$, $\tilde{u}_\infty(\mathcal O)=1$, and  $\tilde{u}_\infty$ is a weak solution of 
	\begin{equation}\label{3.13_vera_bis}
	\begin{cases}
	-\Delta_p^H \tilde{u}_\infty=   \, \tilde{u}_\infty^{p^\ast-1} & \text{ in } \mathcal C \\
	a(\nabla \tilde{u}_\infty) \cdot \nu =0 & \text{ on } \partial\mathcal C  \,. 
	\end{cases}
	\end{equation}

	We now notice that,
in both cases, for any $\rho>0$ we have 
	\begin{equation}\label{3.14_vera}
	||\tilde{u}_\alpha||_{L^{p^\ast}(\Sigma_\alpha\cap B_{\rho})}=||u||_{L^{p^\ast}(\Sigma\cap B_{\rho m_\alpha}(y_\alpha))}\, .
	\end{equation}
Also, by \eqref{3.6_anis}, since $r_k(u)<r''$ we get 
	\begin{equation}\label{3.15_vera}
	B_{\rho m_\alpha}(y_\alpha)\cap B_{r_k(u)}=\emptyset
	\end{equation}
	for $\alpha$ large. Thus, from \eqref{3.14_vera}, \eqref{3.15_vera}, and by definition of $r_k(u)$, we obtain 
	\begin{equation}\label{3.16_vera}
	||\tilde{u}_\alpha||_{L^{p^\ast}(\Sigma_\alpha\cap B_{\rho})}\leq k
	\end{equation}
	for $\alpha$ large. Thus, taking the limit in \eqref{3.16_vera} as $\alpha\rightarrow\infty$ and then as $\rho\rightarrow\infty$, yields
	\begin{equation}\label{3.17_vera}
	||\tilde{u}_\infty||_{L^{p^\ast}(\mathbb{R}^n)}\leq k \quad \text{or} \quad ||\tilde{u}_\infty||_{L^{p^\ast}(\mathcal C)}\leq k\,,
	\end{equation}
	in Case $(i)$ or Case $(ii)$, respectively. Since $k< k_0$ with $k_0>0$ as in Lemma \ref{lem:lower bound}, it follows by \eqref{3.13_vera} (resp. \eqref{3.13_vera_bis}) and \eqref{3.17_vera} that $\tilde{u}_\infty\equiv 0$ in Case $(i)$ (resp. Case $(ii)$), a contradiction to the fact that $\tilde{u}_\infty(\mathcal O)=1$. This completes the proof of the assertion of Step 1.

	\medskip

\noindent	$\bullet$
	\emph{Step 2: Let $u$ be a solution of \eqref{p_laplace_cono_i}. Then $u \in L^{\hat p-1,\infty}(\Sigma)$ for $\hat p := \frac{p(n-1)}{n-p}$.
	}
	
	Recall that, given a set $\Omega$ and $r\geq 1$, one defines 
the space $L^{r,\infty}(\Omega)$ as the set of all measurable functions $v:\Omega\rightarrow\mathbb{R}$ such that
		\begin{equation} \label{step2proof31}
		||v||_{L^{r,\infty}(\Omega)}:=\sup_{h>0}\left\{h\, \meas(\{|u|>h\})^{1/r}\right\} < \infty \,.
		\end{equation}
Using the Sobolev inequality in cones,  the proof of this step can be easily adapted from the case of $\R^n$ (see \cite[Lemma 2.2]{Vetois}) and for this reason is omitted.

	\medskip
	
\noindent	$\bullet$
	\emph{Step 3: Proof of \eqref{1.3-1.4_anis}}.
	
	The proof of this step closely follows the proof of \cite[Theorem 1.1]{Vetois}, which in turn uses \cite[Theorem 1.3]{Trudinger} and \cite[Theorem 5]{Serrin_local}. Even if \cite[Theorem 1.3]{Trudinger} and \cite[Theorem 5]{Serrin_local} are stated in a local setting, thanks to the homogeous Neumann boundary condition they can be easily extended to our setting. For this reason we only give a sketch of the proof, following the argument of \cite[Theorem 1.1]{Vetois}.
	
	Let $k$ and $r$ be as in Step 1. For any $R>0$ and $y\in\Sigma$, we define 
	\begin{equation}\label{4.1_anis}
	u_R(y):=R^{\frac{n-p}{p-1}}u(Ry)\, .
	\end{equation}
	From \eqref{p_laplace_cono} we obtain 
	\begin{equation}\label{4.4_anis}
	-\Delta^H_p u_{R}=R^{-\frac{p}{p-1}} u_{R}^{p^\ast-1} \qquad \text{in} \, \Sigma\, .
	\end{equation}
	Also, writing $u_R^{p^\ast-1}=u_R^{p^\ast-p}u_R^{p-1}$ and using \eqref{lorenzino}, we have
	\begin{equation}\label{4.5_anis}
	R^{-\frac{p}{p-1}}u_R^{p^\ast-1}\leq K_0^{p^\ast-p}u_R^{p-1} \quad \text{ in} \, \overline{\Sigma}\setminus B_{1} \,,
	\end{equation}
	provided that $R\geq r$. Thus, it follows from \eqref{4.4_anis}, \eqref{4.5_anis}, and  \cite[Theorem 1.3]{Trudinger}, that for any $\varepsilon>0$ it holds
	\begin{equation}\label{4.6_anis}
	||u_R||_{L^\infty(\Sigma\cap(B_{4}\setminus B_{2}))}\leq C_\varepsilon||u_R||_{L^{p-1+\varepsilon}(\Sigma\cap(B_{5}\setminus B_{1}))}
	\end{equation}
	for some constant $C_\varepsilon>0$. We fix $\varepsilon_0=\varepsilon_0(n,p)$ such that $0<\varepsilon_0<\hat p-p$, where $\hat p$ is as in Step 2. Since
	\begin{equation*}
	||u_R||_{L^{p-1+\varepsilon_0}(\Sigma\cap(B_{5}\setminus B_{1}))}\leq C_0 ||u_R||_{L^{\hat p-1,\infty}(\Sigma\cap(B_{5}\setminus B_{1}))} \, ,
	\end{equation*}
	for $C_0=C_0(n,p)$, recalling Step 2 we obtain that 
	\begin{equation}\label{4.8_anis}
	||u_R||_{L^\infty(\Sigma\cap(B_{4}\setminus B_{2}))}\leq C_1 
	\end{equation}
	for some constant $C_1$. Hence, by \eqref{4.4_anis}, \eqref{4.8_anis}, and elliptic regularity theory for $p$-Laplacian type equations \cite{DiBenedetto,Tolksdorf}, we get 
	\begin{equation}\label{4.9_anis}
	||\nabla u_R||_{L^\infty(\Sigma \cap (B_{7/2}\setminus B_{5/2}))}\leq C_2
	\end{equation}
	for some constant $C_2$. Here we notice that, even if \eqref{4.9_anis} is proved in \cite[Section 3]{DiBenedetto} in a local setting (see also \cite{CM_Global}, where the authors prove global Lipschitz regularity in convex domains for the case when $H$ coincides with the Euclidean norm), the argument easily extends to our setting by an approximation argument.
	Indeed, as in the proof of Proposition \ref{Lemma_Caccioppoli} below, one can work in regularized domains and, because of the presence of the boundary, with respect to \cite[Section 3]{DiBenedetto} it appears an extra boundary term. However, this can be dropped since the second fundamental form of $\partial \Sigma$ is nonnegative definite (compare with \eqref{int2}-\eqref{int4} below, or with \cite[Proof of Theorem 1.2, Step 1]{CM_Global}).

	Finally, for any $x\in\mathbb{R}^n\setminus B_{3r}$, applying \eqref{4.8_anis} and \eqref{4.9_anis} with $R=|x|/3$ we obtain
	\begin{equation}\label{4.10_anis}
	u(x)\leq C_3|x|^{\frac{p-n}{p-1}} \quad \text{ and } \quad |\nabla u(x)|\leq C_3|x|^{\frac{1-n}{p-1}}
	\end{equation}
	for some constant $C_3$. Since $u$ and $\nabla u$ are uniformly bounded in $B_{3r}$,  \eqref{1.3-1.4_anis} follows. Finally, to prove the lower bound in \eqref{1.3-1.4_anis} one argues as in \cite[pages 159-160]{Vetois}.
%
%
%
%
\end{proof}

\subsection{Asymptotic estimates on higher order derivatives} \label{subsect_caccioppoli}
By using a Caccioppoli-type inequality, in this subsection we prove Proposition \ref{Lemma_Caccioppoli} below which will be useful in the proof of Theorem \ref{thm_main}. In particular it will avoid the use of an asymptotic lower bound on $|\nabla u|$, which is crucial in \cite{Sc}.


\begin{proposition}\label{Lemma_Caccioppoli}
Let $\Sigma$ be a convex cone, and let $u$ be a solution to \eqref{p_laplace_cono} with $a(\cdot)$ given by \eqref{a_def}, where $H$ satisfies the assumptions of Theorem \ref{thm_main}. Then $a(\nabla u)\in W^{1,2}_{\rm loc}(\overline{\Sigma})$, and for any $\gamma\in\mathbb{R}$ the following asymptotic estimate holds:
%
%
\begin{equation}\label{dis_caccioppoli_r}
\int_{B_{r}\cap\Sigma}|\nabla(a(\nabla u))|^2u^{\gamma}\, dx\leq  C\Big(1+r^{-n-\gamma\frac{n-p}{p-1}} \Big)\qquad \forall\,r \geq 1,
\end{equation}
where $C$ is a positive constant independent of $r$.
\end{proposition}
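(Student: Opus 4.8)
The plan is to establish the $W^{1,2}_{\rm loc}$ regularity of $a(\nabla u)$ and the weighted integral estimate \eqref{dis_caccioppoli_r} via a Caccioppoli-type argument, working in regularized domains to handle the boundary. First I would approximate $\Sigma$ by smooth convex domains $\Sigma_\e$ with uniformly bounded (from below by zero) second fundamental form, and consider the corresponding regularized equations with $a$ replaced by an elliptic $a_s(\xi)=(|\xi|^2+s^2)^{(p-2)/2}\xi$-type perturbation (consistent with \eqref{ellipticity_approx}); for such problems the solutions are smooth enough to differentiate the equation. The standard device is to differentiate $\diver(a(\nabla u))+u^{p^*-1}=0$ in the direction $e_k$, obtaining $\diver\big(\nabla a(\nabla u)\nabla u_k\big)=-(p^*-1)u^{p^*-2}u_k$, and then test this against $\eta^2 u^\gamma u_k$ where $\eta$ is a cutoff. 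Summing over $k$ produces, after integration by parts, a term $\int \eta^2 u^\gamma \langle \nabla a(\nabla u)\nabla^2 u,\nabla^2 u\rangle$ on the left (which by the uniform ellipticity \eqref{eq:elliptic H} controls $\int \eta^2 u^\gamma |\nabla(a(\nabla u))|^2$ since $\nabla(a(\nabla u))=\nabla a(\nabla u)\nabla^2 u$), and on the right terms involving $\nabla \eta$, the weight $u^{\gamma-1}\nabla u$, and the zeroth-order term $u^{p^*-2}|\nabla u|^2 u^\gamma$.

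The crucial point is the boundary contribution: when integrating by parts on $\Sigma_\e$, the Neumann condition $a(\nabla u)\cdot\nu=0$ and its differentiated version generate a boundary integral that, after the standard manipulation (see \cite{CM_Global} or \eqref{int2}--\eqref{int4} referenced in the excerpt), reduces to the second fundamental form $\II_{\partial\Sigma}$ evaluated on the tangential gradient of $u$, with a favorable sign because $\partial\Sigma$ is convex, i.e. $\II_{\partial\Sigma}\ge 0$. Thus this boundary term can be discarded, and one is left with an interior inequality identical in structure to the flat case. Then I would choose $\eta$ to be a standard cutoff supported in $B_r\cap\Sigma$, use Young's inequality to absorb the Hessian terms coming from $\nabla\eta$ and from the weight derivative $u^{\gamma-1}$, and invoke the sharp decay estimates of Proposition \ref{Vetosi_anis}: $u(x)\simeq |x|^{-(n-p)/(p-1)}$ and $|\nabla u(x)|\lesssim |x|^{-(n-1)/(p-1)}$ for $|x|\ge1$, to bound all the resulting integrals. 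Integrating the homogeneous decay rates over the dyadic annulus structure of $B_r\setminus B_1$ yields a bound of the form $C(1+r^{-n-\gamma(n-p)/(p-1)})$, the first term accounting for the (uniformly controlled) contribution on $B_1\cap\Sigma$ and the second for the tail; finally one passes to the limit $s\to0$, $\e\to0$ using lower semicontinuity of the left-hand side and the locally uniform $C^{1,\theta}$ bounds on the approximating solutions.

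The main obstacle I anticipate is making the approximation argument genuinely rigorous near the (non-smooth) boundary of the cone: one must ensure that $\Sigma_\e\to\Sigma$ in a way that keeps the second fundamental form nonnegative (so the boundary term retains its sign uniformly in $\e$), that the regularized Neumann problems have solutions converging to $u$ in $C^1_{\rm loc}(\overline\Sigma\setminus\{\mathcal O\})$, and that the weighted estimate survives the limit despite the weight $u^\gamma$ possibly blowing up at infinity or degenerating (for $\gamma<0$) near points where $u$ is large. A secondary technical subtlety is that for $1<p<2$ the matrix $\nabla a(\nabla u)$ degenerates where $\nabla u=0$, so the identity $\nabla(a(\nabla u))=\nabla a(\nabla u)\nabla^2 u$ and the ellipticity lower bound must be justified through the $s>0$ regularization (where $a_s$ is uniformly elliptic) before letting $s\to0$; the uniform-in-$s$ nature of \eqref{ellipticity_approx} is exactly what makes this work, and the critical-Sobolev zeroth-order term is controlled because $p^*-2+\gamma$ times the decay exponent, integrated against the decay of $|\nabla u|^2$, still gives a convergent (or explicitly $r$-dependent) integral.
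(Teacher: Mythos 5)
Your overall architecture (smooth convex approximations of the cone, regularization of $a$ consistent with \eqref{ellipticity_approx}, a weighted Caccioppoli inequality with weight $u^\gamma\eta^2$, discarding the boundary term through the nonnegativity of the second fundamental form together with the Neumann condition, and then concluding via the decay estimates of Proposition \ref{Vetosi_anis}) coincides with the paper's proof. The genuine gap is in your choice of test function and the ensuing claim that ellipticity lets you pass from the Hessian quadratic form to $|\nabla(a(\nabla u))|^2$. Differentiating the equation and testing with $\eta^2u^\gamma\partial_k u$ produces, after summation, the quantity $\int \eta^2u^\gamma\,\mathrm{tr}\bigl(B A B\bigr)$ with $A=\nabla a(\nabla u)$ (symmetric, nonnegative) and $B=\nabla^2u$, i.e. essentially $\int \eta^2 u^\gamma\,|\nabla u|^{p-2}|\nabla^2u|^2$. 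The pointwise bound available is $|\nabla(a(\nabla u))|^2=\mathrm{tr}(BA^2B)\le \|A\|\,\mathrm{tr}(BAB)$, and by \eqref{eq:elliptic H} one has $\|A\|\simeq H(\nabla u)^{p-2}$ (or $(|\nabla u|^2+s^2)^{(p-2)/2}\le s^{p-2}$ after regularization). For $p\ge 2$ this factor is bounded and your argument closes, but for $1<p<2$ it blows up precisely on the set where $\nabla u$ is small, and the blow-up is \emph{not} uniform in the regularization parameter: the constant behaves like $s^{p-2}\to\infty$ as $s\to 0$. So the step ``the quadratic form controls $\int\eta^2u^\gamma|\nabla(a(\nabla u))|^2$ by \eqref{eq:elliptic H}'' fails in the singular range, and your proposed remedy (``the uniform-in-$s$ nature of \eqref{ellipticity_approx}'') does not repair it, since what degenerates is the upper bound on $\nabla a_s$, not the lower one.

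The paper avoids this by testing the (weak form of the) differentiated equation with $\varphi=a_m(\nabla u)\,u^\gamma\eta^2$, i.e. with the components of the stress field rather than of $\nabla u$, following \cite{AKM}: the principal term is then $\sum_{i,m}\partial_m a_i(\nabla u)\,\partial_i a_m(\nabla u)=\mathrm{tr}\bigl((AB)^2\bigr)$, and one has $|\nabla(a(\nabla u))|^2=|AB|^2\le \|A\|\,\|A^{-1}\|\,\mathrm{tr}\bigl((AB)^2\bigr)$, where the relevant constant is the \emph{condition number} of $\nabla a(\xi)$. Thanks to \eqref{eq:elliptic H} the factor $H(\xi)^{p-2}$ cancels in this ratio, so the constant is uniform in $\xi$ (and in the regularization) for every $1<p<n$; this is exactly what yields $a(\nabla u)\in W^{1,2}_{\rm loc}(\overline\Sigma)$ and \eqref{dis_caccioppoli_r} in the full range claimed by the proposition (see \eqref{Jannacci_bis} and the subsequent appeal to \cite{AKM}). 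To fix your argument you would either have to restrict to $p\ge2$ or switch to this stress-field test function (or an equivalent device from \cite{AKM,CM_top}); the remaining ingredients of your plan — the sign of the boundary term, the Young absorption, the use of \eqref{1.3-1.4_anis} to produce the rate $r^{-n-\gamma\frac{n-p}{p-1}}$, and the limits in $s$, $\e$ — are in line with what the paper does.
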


\begin{proof}
The estimate \eqref{dis_caccioppoli_r} is obtained by using a Caccioppoli-type inequality. We argue by approximation, following the approach in \cite{AKM,CM_top}. 

We approximate $\Sigma$ by a sequence of convex cones $\{\Sigma_k\}$ such that $\Sigma_k \subseteq \Sigma$ and $\partial \Sigma_k \setminus \{\mathcal O\}$ is smooth. 
Also, we fix a point $\bar x \in \cap_k\Sigma_k$, and for $k$ fixed we let $u_k$ be the solution of\footnote{The function $u_k$ can be found by considering first the minimizer $v_{k,R}$ of the minimization problem
$$
\min_v\left\{\int_{\Sigma_k\cap B_R}\left[\frac1p H(\nabla v)^p -u^{p^*-1}v\right]\,dx \,:\,\text{$v=0$ on $\Sigma_k\cap \partial B_R$}\right\},
$$
then setting $u_{k,R}(x):=v_{k,R}(x)+u(\bar x)-v_{k,R}(\bar x)$, and finally taking the limit of $u_{k,R}$ as $R\to \infty$ (note that  the functions $\tilde u_{k,R}$ are uniformly $C^{1,\theta}$ in every compact subset of $\Sigma$, and uniformly H\"older continuous up to the boundary).
 }
\begin{equation}\label{approx_1bis}
\begin{cases}
\diver(a(\nabla u_k)) + u^{p^*-1} = 0 & \text{ in } \Sigma_k
\\
u_k(\bar x)=u(\bar x)\\
a(\nabla  u_k)\cdot\nu=0 & \text{ on } \partial\Sigma_k\,.
\end{cases}
\end{equation}
Set
\begin{equation} \label{ajz}
a^\ell(z):=(a\ast\phi_\ell)(z)\qquad \text{for $z\in\mathbb{R}^n$} \, ,
\end{equation}
where $\{\phi_\ell\}$ is a family of radially symmetric smooth mollifiers. Standard properties of convolution and the fact $a(\cdot)$ is continuous imply
$a^\ell \rightarrow a $ uniformly on compact subset of $\mathbb{R}^n$. From \cite[Lemma 2.4]{fusco} we have that $a^\ell$ satisfies the first condition in \eqref{ellipticity_approx} with $s$ replaced by $s_\ell$, where $s_\ell\to 0$ as $\ell\to \infty$.
In addition, since 
$$
\dfrac{1}{\tilde{\alpha}}(|z|^2+s_\ell^2)^{\frac{p-2}{2}}|\xi|^2\leq\nabla a^\ell(z)\xi\cdot\xi\, , \quad \text{ for every $\xi, z\in\mathbb{R}^n$,}
$$
for some $\tilde{\alpha}>0$, we obtain that $a^\ell$ satisfies also the second condition in \eqref{ellipticity_approx}.

Let $u_{k,\ell}$ be a solution of 
\begin{equation}\label{approx_1}
\begin{cases}
\diver(a^\ell(\nabla u_{k,\ell})) + u^{p^*-1} = 0 & \text{ in } \Sigma_k
\\
a^\ell(\nabla u_{k,\ell})\cdot\nu=0 & \text{ on } \partial\Sigma_k 
\end{cases}
\end{equation}
(this solution can be constructed analogously to $u_k$).

%
%

We notice that $u_{k,\ell}$ is unique up to an additive constant.
Also, because $u$ is locally bounded, the functions $u_{k,\ell}$ are $C^{1,\theta}_{\rm loc}(\overline\Sigma_k \setminus \{\mathcal O\}) \cap C^{0,\theta}_{\rm loc}(\overline \Sigma_k)$, uniformly in $\ell$. In particular, assuming without loss of generality that $u_{k,\ell}(\bar x)=u(\bar x)$ for some fixed point $\bar x \in \Sigma_k$, as $\ell \to \infty$ one sees that $u_{k,\ell}$ converges in $C^1_{\rm loc}$ to the unique solution $\bar u_k$ of 
\begin{equation}\label{approx_1bis}
\begin{cases}
\diver(a(\nabla \bar u_k)) + u^{p^*-1} = 0 & \text{ in } \Sigma_k
\\
\bar u_k(\bar x)=u(\bar x)\\
a(\nabla \bar u_k)\cdot\nu=0 & \text{ on } \partial\Sigma_k \,.
\end{cases}
\end{equation}
Since $u_k$ is also a solution of the problem above, it follows by uniqueness that $\bar u_k=u_k$ and therefore
$u_{k,\ell}$ converges to $u_k$ as $\ell \to \infty$. Analogously, $u_k\to u$ as $k\to \infty$.

Given $R>1$ large, we define 
$$
\Omega_k:=\Sigma_k\cap B_{R}\, , \quad \Gamma_{k,0}:=\Sigma_k\cap \partial B_{R} \, , \quad \Gamma_{k,1}:=\partial\Sigma_k\cap B_{R}\, .
$$
Note that, since $u$ is uniformly positive inside $\Sigma$ (see Proposition \ref{Vetosi_anis}), for $k$ large enough (depending on $R$) also $u_k$ is uniformly positive inside $\Omega_k$,
and hence for $\ell$ large enough we have that $u_{k,\ell}$ is also uniformly positive inside $\Omega_k$. In the sequel we shall always assume that $k$ and $\ell$ are sufficiently large so that this positivity property holds.
We now fix $k$ and deal with the functions $u_{k,\ell}$. To simplify the notation, we shall drop the dependency on $k$ and we write $u_\ell, \Sigma,\Omega,\Gamma_0,\Gamma_1$ instead of $u_{k,\ell}, \Sigma_{k},\Omega_{k},\Gamma_{k,0},\Gamma_{k,1}$, respectively.

	The  idea is to prove a Caccioppoli-type inequality for $u_\ell$ and then let $\ell\to \infty$. Since $u_\ell$ solves a non-degenerate equation, we have that $u_\ell\in C^1\cap W^{2,2}_{\rm loc}(\overline\Sigma)$ and furthermore we have $a^\ell(\nabla u_\ell)\in W^{1,2}_{\rm loc}(\overline\Sigma)$. In addition, since $\Sigma$ is smooth outside the origin, $u_\ell$ is of class $C^2$ in $\overline\Omega$ away from $\Gamma_1\cup\{\mathcal O\}$. 

Multiply \eqref{approx_1} by $\psi\in C^\infty_c(B_{R}\setminus B_{1/R})$ and integrate over $\Omega$ to get
\begin{equation*}
\int_{\Omega}\diver(a^\ell(\nabla u_\ell))\psi\, dx =-\int_{\Omega} u^{p^*-1}\psi\, dx,
\end{equation*}
that together with the divergence theorem gives
\begin{equation}\label{previous}
\begin{aligned}
-\int_{\Omega} a^\ell(\nabla u_\ell)\cdot\nabla\psi\, dx+\int_{\partial\Omega}\psi a^\ell(\nabla u_\ell)\cdot\nu\, d\sigma=-\int_{\Omega} u^{p^*-1}\psi\, dx\, .
\end{aligned}
\end{equation}
Since
$$
\int_{\partial\Omega}\psi a^\ell(\nabla u_\ell)\cdot\nu\, d\sigma=\int_{\Gamma_{1}}\psi a^\ell(\nabla u_\ell)\cdot\nu\, d\sigma + \int_{\Gamma_{0}}\psi a^\ell(\nabla u_\ell)\cdot\nu\, d\sigma\, ,
$$
from the fact that $\psi\in C^{\infty}_c(B_{R}\setminus B_{1/R})$ and from the boundary condition in \eqref{approx_1}, we obtain that  the second term in \eqref{previous} vanishes; hence \eqref{previous} becomes
\begin{equation}\label{previous_bis}
\begin{aligned}
-\int_{\Omega} a^\ell(\nabla u_\ell)\cdot\nabla\psi\, dx=-\int_{\Omega} u^{p^*-1}\psi\, dx\, .
\end{aligned}
\end{equation}
Let  $\varphi \in C^\infty_c(B_{R}\setminus B_{1/R})$, and for
 $\delta>0$ small define the set  
$$
\Omega_{\delta}:=\{ x\in\Omega \, : \,  \dist(x,\partial\Omega)>\delta\}\, .
$$ 
Since $\Omega\cap {\rm supp}(\varphi)$ is smooth, for $\delta$ small enough we see that 
$\Omega_{\delta}\setminus\Omega_{2\delta}$ is of class $C^{\infty}$ inside the support of $\varphi$.
In particular, every point
$x \in (\Omega_{\delta}\setminus\Omega_{2\delta})\cap {\rm supp}(\varphi)$ can be written as 
$$
x=y - |x-y|\nu(y)
$$
where $y=y(x)\in \partial \Omega_{\delta}$ is the projection of $x$ on $\partial \Omega_{\delta}$ and $\nu(y)$ is the outward normal to $\partial \Omega_{\delta}$ at $y$. 
Moreover the set $(\Omega_{\delta}\setminus\Omega_{2\delta})\cap {\rm supp}(\varphi)$ can be parametrized on $\partial\Omega_{\delta}$ by a $C^1$ function $g$ (see \cite[Formula 14.98]{GT}).

Let $\zeta_\delta:\Omega \to [0,1]$ be a cut-off function such that $\zeta_\delta=1$ in $\Omega_{2\delta}$, $\zeta_\delta=0$ in $\Omega\setminus\Omega_{\delta}$, and 
$$
\nabla\zeta_\delta(x)=-\frac{1}{\delta}\nu(y(x))\qquad \text{inside }\Omega_{\delta}\setminus\Omega_{2\delta}\,.
$$
Using $\psi=\partial_m(\varphi\zeta_\delta)$ in \eqref{previous_bis} with $m\in \{1,\ldots,n\}$ and integrating by parts, we get 
\begin{equation*}
\begin{aligned}
\sum_{i=1}^n\left(\int_{\Omega} \partial_m a^\ell_i(\nabla u_\ell)\zeta_\delta\partial_i\varphi\, dx+\int_{\Omega} \partial_m a^\ell_i(\nabla u_\ell)\varphi\partial_i\zeta_\delta\, dx\right)=\int_{\Omega} \partial_m(u^{p^*-1})\varphi\zeta_\delta\, dx\, ,
\end{aligned}
\end{equation*}
where we use the notation $a^\ell=(a^\ell_1,\ldots,a^\ell_n)$ to denote the components of the vector field $a^\ell$.

Observe that, from the definition of $\zeta_\delta$, we have  
\begin{equation*}\label{lim1}
\lim_{\delta\rightarrow 0}\int_{\Omega} \partial_m a^\ell_i(\nabla u_\ell)\zeta_\delta\partial_i\varphi\, dx=\int_{\Omega} \partial_m a^\ell_i(\nabla u_\ell)\partial_i\varphi\, dx\, .
\end{equation*}
Also, if we set
$$
f(x)=\partial_m a^\ell_i(\nabla u_\ell(x))\varphi(x)\,,
$$
by the coarea formula we have
\begin{equation*}
\begin{aligned}
\int_{\Omega_\delta\setminus\Omega_{2\delta}} f\partial_i\zeta_\delta\, dx &  = - \frac{1}{\delta} \int_{\Omega_{\delta}\setminus\Omega_{2\delta}}\nu_{i}(y(x))f dx \\
& =  - \frac{1}{\delta} \int_{\delta}^{2 \delta} dt \int_{\partial\Omega_{\delta}} \nu_{i}(y(x))f(y-t\nu(y))|{\rm det}(Dg)|d\sigma(y) \\ & =  -  \int_{1}^{2} ds \int_{\partial\Omega_{s\delta}}  f(y - s \delta \nu(y))  \nu_i(y) |{\rm det}(Dg)|d\sigma(y) \, .
\end{aligned}
\end{equation*}
 Since $f \in C^0$, we can pass to the limit and obtain
\begin{equation*}
\lim_{\delta \to 0} \int_{\Omega} \partial_m a^\ell_i(\nabla u_\ell)\varphi\partial_i\zeta_\delta\, dx = - \int_{\partial\Omega}  \partial_m a^\ell_i(\nabla u_\ell)\varphi\nu_i d \sigma \,.
\end{equation*}
Hence, we proved that
\begin{equation}\label{11.16}
\begin{aligned}
\sum_{i=1}^n\left(\int_{\Omega} \partial_m a^\ell_i(\nabla u_\ell)\partial_i\varphi\, dx- \int_{\partial\Omega}  \partial_m a^\ell_i(\nabla u_\ell)\varphi\nu_i d \sigma\right)=\int_{\Omega} \partial_m(u^{p^*-1})\varphi\, dx\, .
\end{aligned}
\end{equation}
Now, 
let 
$$
\Omega_{\delta}^t:=\{ x\in\Omega_{\delta} \, : \, \dist(x,\partial\Omega_{\delta})>t\}\, .
$$
We notice that, if $x \in (\Omega_{\delta}\setminus\Omega_{2\delta})\cap {\rm supp}(\varphi)$ with $x=y - t \nu(y)$, then $x\in\partial\Omega_{\delta}^t$ and the outward normal to $\partial\Omega_{\delta}^t$ at $x$ coincides with the outward normal to $\partial\Omega_{\delta}$ at $y$. Hence, by writing $\nu(x)$ in place of $\nu(y)$, we have
\begin{equation}\label{int2}
\begin{aligned}
\partial_m a^\ell_i(\nabla u_\ell(x))\varphi(x)\nu_i(x)=&\,\varphi(x)\partial_m(a^\ell(\nabla u_\ell(x))\cdot\nu(x))\\ 
&-\varphi(x)a^\ell_i(\nabla u_\ell(x))\partial_m\nu_i(x)\, .
\end{aligned}
\end{equation}
Now, we take a cut-off function $\eta\in C^{\infty}_c(B_{R}\setminus B_{1/R})$, and for $m\in\{1,\dots,n\}$ we set $\varphi=a^\ell_m(\nabla u_\ell)u_\ell^{\gamma}\eta^2$ where $\gamma\in\mathbb{R}$, and in \eqref{int2} we obtain
\begin{equation}\label{int3}
\begin{aligned}
\partial_m a^\ell_i(\nabla u_\ell(x))\varphi(x)\nu_i(x)=&\,a^\ell_m(\nabla u_\ell(x))u_\ell^{\gamma}(x)\eta^2(x)\partial_m\bigl(a^\ell(\nabla u_\ell(x))\cdot\nu(x)\bigr)\\ 
&-a^\ell_m(\nabla u_\ell(x))u_\ell^{\gamma}(x)\eta^2(x)a^\ell_i(\nabla u_\ell(x))\partial_m\nu_i(x) \, .
\end{aligned}
\end{equation}
We notice that $\partial_m\nu_i (x)$ is the second fundamental form $\mathrm{II}_{x}^{t}$ of $\partial\Omega_{\delta}^t$ at $x$:
$$
\sum_{i,m=1}^n\partial_m\nu_i (x)a^\ell_i(\nabla u_\ell(x))a^\ell_m(\nabla u_\ell(x))=\mathrm{II}_{x}^{t}(a^\ell(\nabla u_\ell(x)),a^\ell(\nabla u_\ell(x)))\, .
$$
Since the cone $\Sigma$ is convex then $\mathrm{II}_{x}^{t}$ is non-negative definite, which implies that
\begin{equation}\label{convexity1}
\sum_{i,m=1}^n\partial_m\nu_i (x)a^\ell_i(\nabla u_\ell(x))a^\ell_m(\nabla u_\ell(x))\geq 0\, .
\end{equation}
Hence \eqref{int3} becomes
\begin{equation}\label{int4}
\begin{aligned}
\sum_{i,m=1}^n\partial_m a^\ell_i(\nabla u_\ell(x))\varphi(x)\nu_i(x)
\leq \sum_{i,m=1}^na^\ell_m(\nabla u_{n}(x))u_\ell^{\gamma}(x)\eta^2(x)\partial_m\bigl(a^\ell(\nabla u_\ell(x))\cdot\nu(x)\bigr)\, ,
\end{aligned}
\end{equation}
and so, with the choice $\varphi=a^\ell_m(\nabla u_\ell)u_\ell^{\gamma}\eta^2,$ we obtain
\begin{equation*}
\begin{aligned}
 \sum_{i,m=1}^n\int_{\partial\Omega}  \partial_m a^\ell_i(\nabla u_\ell)\varphi\nu_i d \sigma &\leq \sum_{i,m=1}^n \int_{\partial\Omega}u_\ell^{\gamma}\eta^2a^\ell_m(\nabla u_\ell)\partial_m\bigl(a^\ell(\nabla u_\ell)\cdot\nu\bigr)\, dx\\
 &=\sum_{i=1}^n \int_{\partial\Omega}u_\ell^{\gamma}\eta^2\,a^\ell(\nabla u_\ell)\cdot \nabla \bigl(a^\ell(\nabla u_\ell)\cdot\nu\bigr)\, dx =0\, ,
 \end{aligned}
\end{equation*}
where the last equality follows from the condition $a^\ell(\nabla u_\ell)\cdot\nu=0$ on $\partial\Sigma$. Indeed, this condition  implies that $a^\ell(\nabla u_\ell)$ is a tangent vector-field and that the tangential derivative of $a^\ell(\nabla u_\ell)\cdot\nu$ vanishes on $\partial\Sigma$.

Hence, recalling \eqref{11.16}, we proved that
\begin{equation}\label{Jannacci_bis}
\begin{aligned}
\sum_{i,m=1}^n\int_{\Omega} \partial_m a^\ell_i(\nabla u_\ell)\partial_i\left(a^\ell_m(\nabla  u_\ell)u_\ell^{\gamma}\eta^2\right)\, dx\leq n\int_{\Omega}  |\nabla (u^{{p^*-1}})||a^\ell(\nabla  u_\ell)|u_\ell^{\gamma}\eta^2\, dx \,.
\end{aligned}
\end{equation}
Inequality \eqref{Jannacci_bis} can be used in place of Equation (4.11) in \cite[Proof of Theorem 4.1]{AKM}, and by arguing as in \cite{AKM} we obtain
\begin{multline*}
\int_{\Omega}|\nabla(a^\ell(\nabla u_\ell))|^2\eta^2 u_\ell^{\gamma}\, dx\leq \\ C\int_{\Omega}|\nabla(a^\ell(\nabla u_\ell))||a^\ell(\nabla u_\ell)|\eta u_\ell^{\frac{\gamma}{2}}|\nabla(\eta u_\ell^{\frac{\gamma}{2}})|\, dx +C \int_{\Omega} |\nabla (u^{{p^*-1}})||a^\ell(\nabla  u_\ell)|u_\ell^{\gamma}\eta^2\, dx \,.
\end{multline*}
From H\"older and Young inequalities, for any $\epsilon \in (0,1)$  we can bound
\begin{multline*}
C\int_{\Omega}|\nabla(a^\ell(\nabla u_\ell))||a^\ell(\nabla u_\ell)|\eta u_\ell^{\frac{\gamma}{2}}|\nabla(\eta u_\ell^{\frac{\gamma}{2}})|\, dx
\\
\leq C\epsilon \int_{\Omega}|\nabla(a^\ell(\nabla u_\ell))|^2\eta^2 u_\ell^{\gamma}\, dx
+\frac{C}{\epsilon}\int_{\Omega}|a^\ell(\nabla u_\ell)|^2|\nabla(\eta u_\ell^{\frac{\gamma}{2}})|^2\, dx,
\end{multline*}
so choosing $\epsilon$ small enough such that $C\epsilon =1/2$, we obtain 
$$
\int_{\Omega}|\nabla(a^\ell(\nabla u_\ell))|^2\eta^2u_\ell^{\gamma}\, dx\leq C\int_{\Omega}|a^\ell(\nabla u_\ell)|^2|\nabla(\eta u_\ell^{\frac{\gamma}{2}})|^2\, dx+C \int_{\Omega} |\nabla (u^{{p^*-1}})||a^\ell(\nabla  u_\ell)|u_\ell^{\gamma}\eta^2\, dx.
$$
Recall that here $\eta\in C^{\infty}_c(B_{R}\setminus B_{1/R})$. However, by approximation the same property holds for any $\eta \in C^\infty_c(\R^n)$.

Now, we recall that we were writing $u_\ell$ in place of $u_{k,\ell}$.
Then, since $u_{k,\ell} \to u_k$ in $C^1_{\rm loc}$ and $a^\ell\to a$ locally uniformly, we can let $\ell\to \infty$ to deduce that
\begin{equation} \label{robertocarlos}
\int_{\Omega_k}|\nabla(a(\nabla u_k))|^2\eta^2u_k^{\gamma}\, dx\leq C\int_{\Omega_k}|a(\nabla u_k)|^2|\nabla(\eta u_k^{\frac{\gamma}{2}})|^2\, dx+C \int_{\Omega_k} |\nabla (u^{{p^*-1}})||a(\nabla  u_k)|u_k^{\gamma}\eta^2\, dx.
\end{equation}
In particular, taking $\gamma=0$, \eqref{robertocarlos} proves that $a(\nabla u_k)\in W^{1,2}_{\rm loc}(\overline{\Sigma}_k)$, and $\{a(\nabla u_k)\}_{k \in \N}$ is uniformly bounded in $W^{1,2}_{\rm loc}$. Hence, letting $k \to\infty$ in \eqref{robertocarlos} we obtain 
\begin{equation*} 
\int_{\Omega}|\nabla(a(\nabla u))|^2\eta^2u^{\gamma}\, dx\leq C\int_{\Omega}|a(\nabla u)|^2|\nabla(\eta u^{\frac{\gamma}{2}})|^2\, dx+C \int_{\Omega} |\nabla (u^{{p^*-1}})||a(\nabla  u)|u^{\gamma}\eta^2\, dx.
\end{equation*}
Finally, the asymptotic estimate \eqref{dis_caccioppoli_r} follows from \eqref{1.3-1.4_anis}. 
\end{proof}

\section{Proof of Theorem \ref{thm_main}} \label{sect_proof}
As already mentioned in the introduction, we consider the auxiliary function 
\begin{equation}\label{def_v}
v= u^{-\frac{p}{n-p}}
\end{equation}
where $u$ is a solution of \eqref{p_laplace_cono}. A straightforward computation shows that $v>0$ satisfies the following problem
\begin{equation}\label{eq_v_bis_anis}
\begin{cases}
\Delta^H_p v = f(v,\nabla v) & \text{in } \Sigma \\
a(\nabla v) \cdot \nu = 0 & \text{on } \partial \Sigma  \,,
\end{cases}
\end{equation}
where $\Delta^H_p v=\diver(a(\nabla v))$ with $a(\xi)$ as \eqref{a_def},
and we set
\begin{equation} \label{f_v_nablav}
f(v,\nabla v)  = \left(\frac{p}{n-p} \right)^{p-1}\dfrac{1}{v}+\frac{n(p-1)}{p}\dfrac{H^p(\nabla v)}{v} \,.
\end{equation}
It is clear that $v$ inherits some properties from $u$. In particular $v \in C^{1,\theta}_{\rm loc}$, and it follows from Proposition \ref{Vetosi_anis} that 
there exist constants $C_0,C_1>0$  such that 
\begin{equation} \label{v_asympt}
C_0  |x|^{-\frac{p}{p-1}}  \leq v(x) \leq C_1 |x|^{-\frac{p}{p-1}} 
\end{equation}
and 
\begin{equation} \label{nabla_v_asympt}
|\nabla v(x)| \leq C_1   |x|^{-\frac{1}{p-1}} 
\end{equation}
for $|x|$ sufficiently large. Higher regularity results for $v$ are summarized in the following lemma.

\begin{lemma}\label{lemma_caccioppoli}
Let $v$ be given by \eqref{def_v}. Then, for every $\sigma\in\mathbb{R}$, the asymptotic estimate
\begin{equation} \label{caccioppoli_asymptotic}
\int_{B_r\cap\Sigma}|\nabla(a(\nabla v))|^2v^\sigma\, dx\leq C\Big(1+r^{n+\frac{\sigma p}{p-1}}\Big)\qquad \forall\,r\geq 1
\end{equation}
holds.
\end{lemma}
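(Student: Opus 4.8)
The plan is to reduce \eqref{caccioppoli_asymptotic} to Proposition~\ref{Lemma_Caccioppoli} and the pointwise bounds \eqref{1.3-1.4_anis} through the change of variables $v=u^{-p/(n-p)}$. Write $\beta:=\frac{p}{n-p}$ and $m:=\frac{n(p-1)}{n-p}$, so that $v=u^{-\beta}$ and $\nabla v=-\beta u^{-\beta-1}\nabla u$. Since $a$ is positively $(p-1)$-homogeneous (because $H$ is a norm) and $\beta u^{-\beta-1}>0$,
\[
a(\nabla v)=\bigl(\beta u^{-\beta-1}\bigr)^{p-1}a(-\nabla u)=-\beta^{p-1}\,u^{-m}\,\check a(\nabla u),
\]
where $\check a(\xi):=-a(-\xi)$ is the operator associated with the reflected norm $\check H(\xi):=H(-\xi)$; note that $\check H$ is again a norm, $\check H^2\in C^2(\R^n\setminus\{\mathcal O\})$, and $\check H$ satisfies \eqref{eq:elliptic H} with the same $\lambda,\Lambda$ (just evaluate \eqref{eq:elliptic H} at $-\xi$). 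By \eqref{1.3-1.4_anis}, $|\nabla u|\in L^\infty(\Sigma)$ and $u$ is bounded below by a positive constant on every ball, so $u^{-m}$ is locally Lipschitz up to $\partial\Sigma$; hence, once $\check a(\nabla u)\in W^{1,2}_{\rm loc}(\overline\Sigma)$ is known (see below), the identity above gives $a(\nabla v)\in W^{1,2}_{\rm loc}(\overline\Sigma)$, and differentiating it and using $|\check a(\nabla u)|\le C|\nabla u|^{p-1}$ yields, a.e.\ in $\Sigma$,
\[
|\nabla(a(\nabla v))|^2\ \le\ C\Bigl(u^{-2m-2}\,|\nabla u|^{2p}\ +\ u^{-2m}\,|\nabla(\check a(\nabla u))|^2\Bigr).
\]

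The crux is the a.e.\ comparison $|\nabla(\check a(\nabla u))|\le C\,|\nabla(a(\nabla u))|$, which yields at once $\check a(\nabla u)\in W^{1,2}_{\rm loc}(\overline\Sigma)$ and, after multiplying by a nonnegative weight, $\int_{B_r\cap\Sigma}|\nabla(\check a(\nabla u))|^2 u^{\gamma}\,dx\le C\int_{B_r\cap\Sigma}|\nabla(a(\nabla u))|^2 u^{\gamma}\,dx$ for every $\gamma\in\R$. Heuristically: on the open set $\{\nabla u\neq0\}$ the equation is non-degenerate, $u\in C^2$ there, $\nabla(\check a(\nabla u))=D\check a(\nabla u)D^2u$ and $\nabla(a(\nabla u))=Da(\nabla u)D^2u$; since \eqref{eq:elliptic H} holds for both $H$ and $\check H$ with the same constants, $Da(\nabla u)$ and $D\check a(\nabla u)$ are both comparable to $|\nabla u|^{p-2}\mathrm{Id}$, and the lower bound $Da(\nabla u)\ge c|\nabla u|^{p-2}\mathrm{Id}$ gives $|D^2u|\le C|\nabla u|^{2-p}|\nabla(a(\nabla u))|$, so the comparison holds there, while both sides vanish a.e.\ on $\{\nabla u=0\}$. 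To make this rigorous in the degenerate regime (and to obtain $\check a(\nabla u)\in W^{1,2}_{\rm loc}$ at all), I would rerun the approximation scheme from the proof of Proposition~\ref{Lemma_Caccioppoli}: on the non-degenerate approximants $u_{k,\ell}\in C^1\cap W^{2,2}_{\rm loc}$ the pointwise inequality $|\nabla(\check a^\ell(\nabla u_{k,\ell}))|\le C|\nabla(a^\ell(\nabla u_{k,\ell}))|$ is exactly the computation above, with $C$ uniform in $k,\ell$ because $\|D\check a^\ell(\xi)\|$ and $\|Da^\ell(\xi)\|$ are both comparable to $(|\xi|^2+s_\ell^2)^{(p-2)/2}$; since $\{a^\ell(\nabla u_{k,\ell})\}$ is uniformly bounded in $W^{1,2}_{\rm loc}$ by that proof, so is $\{\check a^\ell(\nabla u_{k,\ell})\}$, and a limiting argument (using weak lower semicontinuity of $F\mapsto\int|\nabla F|^2 w$ for $w\ge0$) delivers the comparison.

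It then remains to integrate the bound for $|\nabla(a(\nabla v))|^2$ against $v^\sigma=u^{-\sigma p/(n-p)}$ and match exponents. For the first term, \eqref{1.3-1.4_anis} gives $u(x)^s\le C|x|^{-s(n-p)/(p-1)}$ for every $s\in\R$ and $|\nabla u(x)|\le C|x|^{-(n-1)/(p-1)}$ when $|x|\ge1$ (all quantities being bounded for $|x|\le1$), and a direct computation shows that the exponent of $|x|$ in $u^{-2m-2-\sigma p/(n-p)}|\nabla u|^{2p}$ equals $\frac{\sigma p}{p-1}$, so its integral over $B_r\cap\Sigma$ is $\le C\bigl(1+r^{\,n+\sigma p/(p-1)}\bigr)$. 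For the second term, $u^{-2m}v^\sigma=u^{\gamma}$ with $\gamma=-\frac{2n(p-1)}{n-p}-\frac{\sigma p}{n-p}$, and the above comparison together with Proposition~\ref{Lemma_Caccioppoli} bounds its integral by $C\bigl(1+r^{-n-\gamma(n-p)/(p-1)}\bigr)$, where one checks $-n-\gamma\frac{n-p}{p-1}=n+\frac{\sigma p}{p-1}$. Adding the two contributions gives \eqref{caccioppoli_asymptotic}.

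I expect the main obstacle to be precisely the comparison between $a(\nabla u)$ and $\check a(\nabla u)$: since $H$ is not assumed symmetric, the reflected operator $\check a\neq a$ genuinely appears, and the possible degeneracy of the operator prevents one from arguing directly on $u$, forcing the detour through the non-degenerate approximation (which is why the machinery of Proposition~\ref{Lemma_Caccioppoli} is reused). Everything after that is exponent bookkeeping produced by the substitution $v=u^{-p/(n-p)}$ together with \eqref{1.3-1.4_anis}.
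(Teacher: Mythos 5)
Your proof is correct, and the reduction is the same one the paper uses: differentiate the identity for $a(\nabla v)$ coming from $v=u^{-p/(n-p)}$, estimate the term without second derivatives (which scales like $|x|^{\sigma p/(p-1)}$) via the decay bounds \eqref{1.3-1.4_anis}, and absorb the term containing $|\nabla(a(\nabla u))|^2$ into Proposition \ref{Lemma_Caccioppoli}; your exponent bookkeeping, including $-n-\gamma\frac{n-p}{p-1}=n+\frac{\sigma p}{p-1}$, checks out. Where you genuinely differ is in not assuming $H$ even: the paper writes $a(\nabla v)=-\bigl(\frac{p}{n-p}\bigr)^{p-1}u^{-\frac{n(p-1)}{n-p}}a(\nabla u)$, which uses $a(-\xi)=-a(\xi)$, i.e. $H(-\xi)=H(\xi)$, whereas you keep the reflected field $\check a(\xi)=-a(-\xi)$ and add the comparison $|\nabla(\check a(\nabla u))|\le C|\nabla(a(\nabla u))|$ so that Proposition \ref{Lemma_Caccioppoli}, stated for $a$, still applies. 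That comparison is sound as you argue it (the two-sided bounds $c|\xi|^{p-2}{\rm Id}\le Da(\xi)$ and $\|D\check a(\xi)\|\le C|\xi|^{p-2}$ follow from \eqref{eq:elliptic H}, which $\check H$ inherits with the same constants, and the approximation scheme handles the degenerate set), but you can avoid re-running the approximation altogether: $G:=\check a\circ a^{-1}$ is globally Lipschitz, since at $z=a(\xi)$, $\xi\neq \mathcal O$, one has $DG(z)=D\check a(\xi)[Da(\xi)]^{-1}$ with norm bounded by $C|\xi|^{p-2}\cdot c^{-1}|\xi|^{2-p}$, so $\check a(\nabla u)=G(a(\nabla u))$ lies in $W^{1,2}_{\rm loc}(\overline\Sigma)$ and satisfies the pointwise bound directly. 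What the paper's one-line proof buys is brevity, at the price of implicitly assuming $H$ even (or, equivalently, of running all of Section \ref{sect_proof} with the reversed norm $\check H$, which satisfies the same hypotheses, so that the relevant object is $\check a(\nabla v)=-\bigl(\frac{p}{n-p}\bigr)^{p-1}u^{-\frac{n(p-1)}{n-p}}a(\nabla u)$ and no comparison is needed); your version is what is required if the estimate is wanted for $a(\nabla v)$ itself with a possibly non-symmetric norm.
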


\begin{proof}
We notice that 
\begin{equation*}
a(\nabla v)= -\left(\dfrac{p}{n-p}\right)^{p-1}u^{-\frac{n(p-1)}{n-p}}a(\nabla u)
\end{equation*}
and  
\begin{equation*}
\nabla(a(\nabla v)) =-\left(\dfrac{p}{n-p}\right)^{p-1}\left[u^{-\frac{n(p-1)}{n-p}}\nabla(a(\nabla u)) - \dfrac{n(p-1)}{n-p}u^{\frac{p(1-n)}{n-p}}\nabla u\, \otimes  a (\nabla u)\right] \,,
\end{equation*}
so it follows from Proposition \ref{Lemma_Caccioppoli} that
\begin{equation} \label{a_nablav_W12}
a(\nabla v) \in W^{1,2}_{\rm loc}(\overline \Sigma) \,.
\end{equation}
Finally, the asymptotic estimate  \eqref{caccioppoli_asymptotic} follows from \eqref{dis_caccioppoli_r} and \eqref{1.3-1.4_anis}.
\end{proof}

\subsection{An integral inequality}
In this subsection, by using the convexity of the cone, we show that $v$ satisfies an integral inequality.

We recall that the second symmetric function $S^2(M)$ of a $n\times n$ matrix $M=(m_{ij})$  is the sum of all the principal minors of $A$ of order two, and we have 
\begin{equation}\label{eq200}
S^2(M)=\frac12\sum_{i,j}S^2_{ij}(M)m_{ij}\,,
\end{equation}
where 
$$
S^2_{ij}(M)=-m_{ji} + \delta_{ij} \tr (M) \,.
$$
As proved in \cite[Lemma 3.2]{CS}, given two symmetric matrices $B,C\in \mathbb{R}^{n\times n}$ with $B$ positive semidefinite, and by setting $M=BC$, we have the following Newton's type inequality:
\begin{equation}\label{newtonIneq}
S^2{(M)}\le\frac{n-1}{2n}\tr(M)^2\, .
\end{equation}
Moreover, if ${\tr} (M)\neq 0$ and equality holds in (\ref{newtonIneq}), then 
\begin{equation*}
M=\frac{{\tr}(M)}{n}\, {\rm Id} \,,
\end{equation*}
and $B$ is positive definite. As we will describe later, we will apply \eqref{newtonIneq} to the matrix $M=\nabla [a(\nabla v)]$.

We start from the following differential identity (see \cite{BiCi}).
We use the Einstein convention of summation over repeated indices.

\begin{lemma}\label{lemma_BC}
	Let $v$ be a positive function of class $C^3$ and let $V:\mathbb{R}^n\rightarrow\mathbb{R}^+$ be of class $C^3(\mathbb{R}^n)$ and such that $V(\nabla v)\diver(\nabla V(\nabla v))$ can be continuously extended to zero at $\nabla v=0$. 
	Let 
	\begin{equation}\label{def_W}
	W=\nabla[\nabla_\xi V(\nabla v)]=V_{\xi_i\xi_j}(\nabla v)v_{ij}\, .
	\end{equation}
	Then, for any $\gamma\in\mathbb{R}$ we have
		\begin{equation}
	2v^\gamma S^2(W)=\diver(v^\gamma S^2_{ij}(W)V_{\xi_i}(\nabla v)) - \gamma v^{\gamma-1}S^2_{ij}(W)V_{\xi_i}(\nabla v)v_j
	\end{equation}
	and 
	\begin{equation}\label{BC_generic}
	\begin{aligned}
	\diver&\bigl(v^\gamma S_{ij}^2(W)V_{\xi_i}(\nabla v)+\gamma(p-1)v^{\gamma-1}V(\nabla v) V_{\xi_j}(\nabla v)\bigr) \\ 
	= &\,2v^\gamma S^2(W) +\gamma(\gamma-1)(p-1)v^{\gamma-2}V(\nabla v)V_{\xi_i}(\nabla v)v_i \\
	&+\gamma v^{\gamma-1}\left((p-1)V(\nabla v)+V_{\xi_i}(\nabla v)v_i\right){\tr}(W) \\
	&+\gamma v^{\gamma-1}\left((p-1)V_{\xi_i}(\nabla v)V_{\xi_j}(\nabla v)v_{ij}+V_{\xi_j\xi_l}(\nabla v)v_{li}V_{\xi_i}(\nabla v)v_j\right)\, .
	\end{aligned}
	\end{equation}
	In particular, if $H$ is a norm and 
	\begin{equation}\label{plus}
	V(\xi)=\dfrac{H^p(\xi)}{p} \quad \text{ for $p>1$ and $\xi\in\mathbb{R}^n$}\, ,
	\end{equation}
	then 
	\begin{equation}\label{BC_anis}
	\begin{aligned}
	2v^\gamma S^2(W)=&\,\diver\bigl(v^\gamma S_{ij}^2(W)V_{\xi_i}(\nabla v)+\gamma(p-1)v^{\gamma-1}V(\nabla v)\nabla_\xi V(\nabla v)\bigr) \\
	&-\gamma(\gamma-1)p(p-1)v^{\gamma-2}V^2(\nabla v)-\gamma(2p-1)v^{\gamma-1}V(\nabla v)\Delta^H_p v\, ,
	\end{aligned}
	\end{equation}
	where 
$\Delta_p^H v=\diver( a(\nabla v))$ and $a(\cdot)$ is given by \eqref{a_def}.
	Observe that, in this particular case,
	$
	W(x): = \nabla[a(\nabla v(x))]$.
\end{lemma}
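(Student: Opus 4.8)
The plan is to exploit that $W$ is the Jacobian of a vector field --- namely $W=\nabla\beta$ with $\beta:=\nabla_\xi V(\nabla v)$, so that $\partial_j\beta_i=V_{\xi_i\xi_l}(\nabla v)v_{lj}=W_{ij}$ and $\partial_j\beta_j=\diver(\nabla_\xi V(\nabla v))=\tr W$ --- together with the null-Lagrangian structure of the second elementary symmetric function, and then to run the Leibniz rule while keeping track of the weight $v^\gamma$.

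More precisely, since $S^2$ is a null Lagrangian and $\beta\in C^2$ by the regularity assumed on $V$ and $v$, one has the Piola-type identity $\sum_j\partial_j S^2_{ij}(W)=0$ for every $i$ (away from the critical set $\{\nabla v=0\}$, near which the assumed continuous extendability of $V(\nabla v)\diver(\nabla_\xi V(\nabla v))$ lets one pass to the limit in all the identities below). Applying the product rule to $\diver\bigl(v^\gamma S^2_{ij}(W)V_{\xi_i}(\nabla v)\bigr)$ (the divergence acting on the index $j$), and using this Piola identity, $\partial_j\bigl(V_{\xi_i}(\nabla v)\bigr)=W_{ij}$, and the algebraic relation $S^2(W)=\tfrac12 S^2_{ij}(W)W_{ij}$ from \eqref{eq200}, one obtains
\begin{equation*}
\diver\bigl(v^\gamma S^2_{ij}(W)V_{\xi_i}(\nabla v)\bigr)=2v^\gamma S^2(W)+\gamma v^{\gamma-1}S^2_{ij}(W)V_{\xi_i}(\nabla v)v_j,
\end{equation*}
which is the first identity in the statement.

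For the second identity, I would rewrite the remainder $\gamma v^{\gamma-1}S^2_{ij}(W)V_{\xi_i}(\nabla v)v_j$ by means of $S^2_{ij}(W)=-W_{ji}+\delta_{ij}\tr W$ and $W_{ji}=V_{\xi_j\xi_l}(\nabla v)v_{li}$, and then add the divergence of the auxiliary vector field $\gamma(p-1)v^{\gamma-1}V(\nabla v)\nabla_\xi V(\nabla v)$. Expanding this last divergence by the product rule --- using $\partial_j\bigl(V(\nabla v)\bigr)=V_{\xi_l}(\nabla v)v_{lj}$ and $\partial_j\beta_j=\tr W$ --- and collecting the terms of order $v^{\gamma-2}$, of order $v^{\gamma-1}$, and the various quadratic contractions of the Hessian $D^2v$, yields \eqref{BC_generic}. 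This step is a direct but somewhat lengthy computation; the only real subtlety is the index bookkeeping.

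Finally, for the anisotropic case one specialises to $V=H^p/p$, which is positively $p$-homogeneous; Euler's relations $\nabla_\xi V(\xi)\cdot\xi=pV(\xi)$ and $D^2V(\xi)\,\xi=(p-1)\nabla_\xi V(\xi)$, evaluated at $\xi=\nabla v$, turn $(p-1)V(\nabla v)+V_{\xi_i}(\nabla v)v_i$ into $(2p-1)V(\nabla v)$, turn $V(\nabla v)V_{\xi_i}(\nabla v)v_i$ into $pV^2(\nabla v)$, and make the two quadratic terms $V_{\xi_i}(\nabla v)V_{\xi_j}(\nabla v)v_{ij}$ and $V_{\xi_j\xi_l}(\nabla v)v_{li}V_{\xi_i}(\nabla v)v_j$ collapse against each other (both being multiples of $a(\nabla v)^{\top}D^2v\,a(\nabla v)$, since $\nabla_\xi(H^p/p)=a$). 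Recalling also that $\tr W=\diver(a(\nabla v))=\Delta^H_p v$, one arrives at \eqref{BC_anis}. I expect the main obstacle to be the first step --- recognising and justifying the null-Lagrangian (divergence) structure of $S^2_{ij}(W)$ and handling the behaviour near the critical set of $v$ through the extension hypothesis; once that is in place, the rest is bookkeeping together with the homogeneity reductions.
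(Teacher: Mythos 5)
Your proposal is correct, and it supplies precisely the computation that the paper does not reproduce: the paper's ``proof'' of this lemma is just the citation to \cite[Lemma 4.1]{BiCi}. The structure you use is the intended one: since $W=\nabla\beta$ with $\beta=\nabla_\xi V(\nabla v)\in C^2$ (here the hypotheses $v,V\in C^3$ are exactly what is needed), the rows of $S^2_{ij}(W)=-W_{ji}+\delta_{ij}\tr W$ are divergence free by Schwarz's theorem, and the first identity follows from the Leibniz rule together with $\partial_j\bigl(V_{\xi_i}(\nabla v)\bigr)=W_{ij}$ and \eqref{eq200}; the second identity is then the product-rule expansion you describe; and the anisotropic case \eqref{BC_anis} follows from the Euler relations $\nabla_\xi V(\xi)\cdot\xi=pV(\xi)$ and $D^2V(\xi)\,\xi=(p-1)\nabla_\xi V(\xi)$ for $V=H^p/p$, with $\tr W=\Delta^H_p v$. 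One remark so that you do not doubt your bookkeeping: carrying out your expansion literally, the last summand of \eqref{BC_generic} comes out as $-\,V_{\xi_j\xi_l}(\nabla v)v_{li}V_{\xi_i}(\nabla v)v_j$, i.e.\ with the opposite sign to the one printed in the statement; the minus-sign version is the correct one, since by Euler's relation $V_{\xi_j\xi_l}(\nabla v)v_j=(p-1)V_{\xi_l}(\nabla v)$ it makes the two quadratic terms cancel (the ``collapse'' you invoke) and yields exactly \eqref{BC_anis}, whereas the printed plus sign would leave an extra term $2\gamma(p-1)v^{\gamma-1}V_{\xi_i}(\nabla v)V_{\xi_j}(\nabla v)v_{ij}$ incompatible with \eqref{BC_anis} and with the way the identity is used later in the paper. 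So the discrepancy is a sign typo in the displayed \eqref{BC_generic}, not a gap in your argument; your treatment of the critical set $\{\nabla v=0\}$ via the extension hypothesis is also appropriate for the application where $V=H^p/p$ fails to be $C^2$ at the origin.
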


\begin{proof}
	See \cite[Lemma 4.1]{BiCi}.
\end{proof}

The idea is to apply the above lemma to the function $v$ solving \eqref{eq_v_bis_anis} and integrate the identity above on $\Sigma$. Due to the lack of regularity of $v$, Lemma \ref{lemma_BC} cannot be applied directly but we can still prove its integral counterpart.

\begin{lemma} \label{lemma_integr1}
Let $v$ be given by \eqref{def_v}, let $V$ be as in \eqref{plus}, and $W$ as in \eqref{def_W}.  Then, for any $\varphi\in C^{\infty}_c(\Sigma)$, we have 
\begin{equation}\label{BC_integral}
\begin{aligned}
\int_{\Sigma}&\left(2v^\gamma S^2(W)+\gamma(\gamma-1)p(p-1)v^{\gamma-2}V^2(\nabla v)+\gamma(2p-1)v^{\gamma-1}V(\nabla v)\Delta^H_p v\right) \varphi \\=&-\int_{\Sigma}\varphi_{j}\bigl(v^\gamma S_{ij}^2(W)V_{\xi_i}(\nabla v)+\gamma(p-1)v^{\gamma-1}V(\nabla v)V_{\xi_j}(\nabla v)\bigr) \, .
\end{aligned}
\end{equation}
\end{lemma}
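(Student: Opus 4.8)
The plan is to establish \eqref{BC_integral} as the distributional version of the pointwise identity \eqref{BC_anis}, compensating for the fact that $v$ is only $C^{1,\theta}_{\rm loc}$ with $a(\nabla v)\in W^{1,2}_{\rm loc}$ (Lemma \ref{lemma_caccioppoli}) rather than $C^3$. The natural route is an approximation argument: replace $v$ by a family of smooth functions $v_\varepsilon$ for which Lemma \ref{lemma_BC} applies verbatim, integrate the resulting identity against $\varphi\in C^\infty_c(\Sigma)$ (so that no boundary terms from $\partial\Sigma$ appear — this is why we restrict to $\varphi$ compactly supported inside the open cone, not merely in $\overline\Sigma$), and then pass to the limit. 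A clean way to produce such a $v_\varepsilon$ is to use the approximating solutions $u_{k,\ell}$ (equivalently $v_{k,\ell}=u_{k,\ell}^{-p/(n-p)}$) constructed in the proof of Proposition \ref{Lemma_Caccioppoli}: these solve a nondegenerate, smooth equation, hence are $C^{2,\theta}_{\rm loc}$ (even $C^3_{\rm loc}$ by Schauder, since $a^\ell$ is smooth), so Lemma \ref{lemma_BC} with $V=V^\ell=H^p_\ell/p$ or with the regularized $V$ gives the pointwise identity for $v_{k,\ell}$.

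The key steps, in order, are as follows. First, I would record that for the regularized solutions the identity of Lemma \ref{lemma_BC} holds pointwise, multiply by $\varphi\in C^\infty_c(\Sigma)$, and integrate by parts, obtaining \eqref{BC_integral} with $v$ replaced by $v_{k,\ell}$ and $V$ by the regularized potential (here the divergence terms integrate against $\varphi_j$ with no boundary contribution since $\supp\varphi\Subset\Sigma$). Second, I would let $\ell\to\infty$ and then $k\to\infty$; on $\supp\varphi$ we have $v_{k,\ell}\to v$ in $C^1_{\rm loc}$ and $a^\ell(\nabla v_{k,\ell})\to a(\nabla v)$ in $W^{1,2}_{\rm loc}$ (this is exactly the content of the Caccioppoli estimate \eqref{robertocarlos} and its passage to the limit, combined with the uniform positivity of $v$ on compact subsets of $\Sigma$). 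The terms on the right-hand side and the terms $\gamma(\gamma-1)p(p-1)v^{\gamma-2}V^2(\nabla v)\varphi$ and $\gamma(2p-1)v^{\gamma-1}V(\nabla v)\Delta^H_p v\,\varphi$ are products of quantities converging in $L^2_{\rm loc}$ (or better, $C^0$) with bounded factors, so they pass to the limit. The only term requiring care is $\int_\Sigma 2v^\gamma S^2(W)\varphi$: since $S^2(W)$ is quadratic in $W=\nabla[a(\nabla v)]$, one needs the strong $L^2_{\rm loc}$ convergence $\nabla[a^\ell(\nabla v_{k,\ell})]\to \nabla[a(\nabla v)]$, not merely weak; this is where the uniform Caccioppoli bound plus a standard lower-semicontinuity/diagonal argument (or an energy-convergence argument) is invoked.

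The main obstacle I expect is precisely this strong convergence of the Hessian-type term $W^\ell:=\nabla[a^\ell(\nabla v_{k,\ell})]$ in $L^2_{\rm loc}(\Sigma)$. Weak $W^{1,2}_{\rm loc}$ bounds on $a^\ell(\nabla v_{k,\ell})$ come for free from \eqref{robertocarlos}, but the quadratic quantity $S^2(W)$ is not weakly continuous. One resolves this by testing the (regularized) equation for $v_{k,\ell}$ against a suitable test function — e.g. differentiating the equation and pairing with $\partial_m(a^\ell(\nabla v_{k,\ell}))\zeta^2$ for a cutoff $\zeta$ — to obtain convergence of the energies $\int |\nabla a^\ell(\nabla v_{k,\ell})|^2\zeta^2$, which together with weak convergence upgrades to strong convergence; alternatively, since the limit equation $\diver(a(\nabla v))=f(v,\nabla v)$ with $v$ fixed is, in the relevant localized sense, a linear uniformly elliptic equation for $a(\nabla v)$ once the coefficients are frozen, one gets $a(\nabla v_{k,\ell})\to a(\nabla v)$ in $W^{1,2}_{\rm loc}$ by difference estimates. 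In either case the argument is by now standard, and I would present it briefly, citing \cite{AKM} where the analogous Caccioppoli scheme is carried out, and then conclude that every term in the regularized identity converges to the corresponding term in \eqref{BC_integral}, which establishes the lemma.
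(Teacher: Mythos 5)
Your overall plan (apply the pointwise identity of Lemma \ref{lemma_BC} to smooth approximants, integrate against $\varphi\in C^\infty_c(\Sigma)$ so that no boundary terms appear, then pass to the limit) is the right one, but the regularization you choose is genuinely different from the paper's proof of this particular lemma. The paper does not go back to the approximating solutions $u_{k,\ell}$: it simply mollifies the function itself, setting $v^\varepsilon=v\ast\rho^\varepsilon$, $V^\varepsilon=V\ast\rho^\varepsilon$ (so $a^\varepsilon=a\ast\rho^\varepsilon$), writes the \emph{generic} identity \eqref{BC_generic} for $(v^\varepsilon,V^\varepsilon)$, integrates against $\varphi$, and lets $\varepsilon\to0$, using that $v\in C^{1,\theta}_{\rm loc}$ and $a(\nabla v)\in W^{1,2}_{\rm loc}$ give $a^\varepsilon(\nabla v^\varepsilon)\to a(\nabla v)$ and $W^\varepsilon\to W$ in $L^2_{\rm loc}$, and only at the very end recombining the terms via the homogeneity of $H$ (i.e.\ via \eqref{BC_anis}) to reach \eqref{BC_integral}. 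Since the function being approximated is fixed and only convolved, no second-order estimates for solutions of perturbed equations are needed; your scheme through $v_{k,\ell}$ is the one the paper reserves for Lemma \ref{lemma_cut-off}, where the boundary term forces one to work with solutions in the smoothed cones, and for the interior identity it is heavier than necessary.

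The obstacle you single out (convergence of the quadratic term $S^2(W^{k,\ell})$) is indeed the crux of your route, but neither of your proposed fixes is solid as stated. The ``frozen-coefficients, linear uniformly elliptic'' difference estimate does not apply for $p\neq2$: $\nabla a$ degenerates or blows up at zeros of $\nabla v$, so there is no uniform ellipticity with which to compare $a^\ell(\nabla v_{k,\ell})$ and $a(\nabla v)$ in $W^{1,2}_{\rm loc}$. The energy-convergence route is not ``standard'' either: the Caccioppoli computation leading to \eqref{robertocarlos} discards a signed boundary term and uses Young's inequality, so it produces uniform bounds and weak $W^{1,2}_{\rm loc}$ compactness, not convergence of the $L^2$ norms of $W^{k,\ell}$. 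If you wish to keep your scheme, the cleanest repair is to note that strong convergence is not needed: $2S^2(W^{k,\ell})=({\rm tr}\,W^{k,\ell})^2-{\rm tr}\bigl((W^{k,\ell})^2\bigr)$ is a sum of $2\times2$ minors of the Jacobian of the vector field $a^\ell(\nabla v_{k,\ell})$, hence a null Lagrangian; local uniform convergence of $a^\ell(\nabla v_{k,\ell})$ together with the uniform $W^{1,2}_{\rm loc}$ bound gives $S^2(W^{k,\ell})\to S^2(W)$ in the sense of distributions, and the uniform $L^1_{\rm loc}$ bound allows you to test against the merely continuous weight $v^\gamma\varphi$. Alternatively, simply mollify $v$ and $V$ directly, as the paper does.
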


\begin{proof} We argue by approximation. So, first we extend $v$ as $0$ outside $\Sigma$, and then for $\varepsilon>0$ we define $v^\varepsilon=v\ast\rho^\varepsilon$ and $V^\varepsilon=V\ast\rho^\varepsilon$, where $\rho^\varepsilon$ is a standard mollifier. Also, we set $a^\varepsilon=\nabla V^\varepsilon$ and $W^\varepsilon=(w^\varepsilon_{ij})_{i,j=1,\dots,n}$ where $w_{ij}^\varepsilon=\partial_j (a^\varepsilon_i(\nabla v^\varepsilon))$.

Since $V\in C^1(\mathbb{R}^n)$ then $a^\varepsilon_i=a_i\ast\rho^\varepsilon$ for $i=1,\dots,n$, where $a$ is given by \eqref{a_def}.
Also, since $a(\nabla v)\in W^{1,2}_{\rm loc}(\Sigma)$, then $a^\varepsilon_i(\nabla v^\varepsilon)\rightarrow a_i(\nabla v)$ and $w_{ij}^\varepsilon\rightarrow w_{ij}$ in $L^2_{\rm loc}(\Sigma)$. 
	
Moreover, since $H_0(\nabla H(\xi))=1$ for any $\xi\in\mathbb{R}^n\setminus \{0\}$ we have that $H_0(a(\xi))=H^{p-1}(\xi)$, which implies that $pV(\xi)=H^{\frac{p}{p-1}}_0(a(\xi))$. Since $H^{\frac{p}{p-1}}_0$ is locally Lipschitz and $a(\nabla v)\in W^{1,2}_{\rm loc}(\Sigma)$ then $V(\nabla v)\in  W^{1,2}_{\rm loc}(\Sigma)$ and we have that $\partial_{x_j}(V^\varepsilon(\nabla v^\varepsilon))\rightarrow\partial_{x_j}(V(\nabla v))$ in $L^2_{\rm loc}(\Sigma)$.
Now we write \eqref{BC_generic} for the approximating functions $v^\varepsilon$, $V^\varepsilon$ and $W^\varepsilon$, we multiply by $\varphi\in C^{\infty}_c(\Sigma)$ and integrate over $\Sigma$.
Since $\varphi$ has compact support inside $\Sigma,$ it follows from the divergence theorem that \begin{equation}\label{BC_integral_approx}
\begin{aligned}
\int_{\Sigma}&\left(2(v^\varepsilon)^\gamma S^2(W^\varepsilon)+\gamma(\gamma-1)(p-1)(v^\varepsilon)^{\gamma-2}V^\varepsilon(\nabla v^\varepsilon)V^{\varepsilon}_{\xi_i}(\nabla v^\varepsilon)v^\varepsilon_i\right) \varphi \\
+\int_{\Sigma}&\gamma (v^\varepsilon)^{\gamma-1}\left((p-1)V^\varepsilon(\nabla v^\varepsilon)+V^\varepsilon_{\xi_i}(\nabla v^\varepsilon)v^\varepsilon_i\right){\tr}(W^\varepsilon)\varphi \\
+\int_{\Sigma}& \gamma (v^\varepsilon)^{\gamma-1}\left((p-1)V^\varepsilon_{\xi_i}(\nabla v^\varepsilon)V^\varepsilon_{\xi_j}(\nabla v^\varepsilon)v^\varepsilon_{ij}+V^\varepsilon_{\xi_j\xi_l}(\nabla v^\varepsilon)v^\varepsilon_{li}V^\varepsilon_{\xi_i}(\nabla v^\varepsilon)v^\varepsilon_j\right)\varphi \\
=&-\int_{\Sigma}\varphi_{j}\bigl((v^\varepsilon)^\gamma S_{ij}^2(W^\varepsilon)V^\varepsilon_{\xi_i}(\nabla v^\varepsilon)+\gamma(p-1)(v^\varepsilon)^{\gamma-1}V^\varepsilon(\nabla v^\varepsilon)V^\varepsilon_{\xi_j}(\nabla v^\varepsilon)\bigr) \, .
\end{aligned}
\end{equation} 
Since $V^\varepsilon_{\xi_i}(\nabla v^\varepsilon)v^\varepsilon_{ij}=\partial_{x_j}(V^\varepsilon(\nabla v^\varepsilon))$, recalling  \eqref{BC_anis} we conclude easily by letting $\varepsilon\to 0$.
\end{proof}

Now we extend Lemma \ref{lemma_integr1} to a generic cut-off function in $\mathbb{R}^n$. Here, the convexity of $\Sigma$ plays a crucial role.

\begin{lemma}\label{lemma_cut-off}
	Let $v$ be given by \eqref{def_v}, let $V$ be as in \eqref{plus}, and $W$ as in \eqref{def_W}. Consider a non-negative cut-off function $\eta\in C_c^\infty(\R^n)$. Then
		 \begin{equation}\label{BC_integral_cut-off}
	 \begin{aligned}
	 \int_{\Sigma}&\left(2v^\gamma S^2(W)+\gamma(\gamma-1)p(p-1)v^{\gamma-2}V^2(\nabla v)+\gamma(2p-1)v^{\gamma-1}V(\nabla v)\Delta^H_p v\right) \eta \\\geq&-\int_{\Sigma}\eta_{j}\bigl(v^\gamma S_{ij}^2(W)V_{\xi_i}(\nabla v)+\gamma(p-1)v^{\gamma-1}V(\nabla v)V_{\xi_j}(\nabla v)\bigr) \, .
	 \end{aligned}
	 \end{equation}
\end{lemma}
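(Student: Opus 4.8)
The plan is to deduce \eqref{BC_integral_cut-off} from \eqref{BC_integral} by an approximation argument in which the test function $\varphi \in C_c^\infty(\Sigma)$ is allowed to degenerate to a cut-off $\eta \in C_c^\infty(\R^n)$ that does not vanish on $\partial\Sigma$. The only place where the full strength $\varphi \in C_c^\infty(\Sigma)$ was used in Lemma \ref{lemma_integr1} is to kill the boundary term coming from the divergence theorem; thus the heart of the matter is to show that this boundary term has the \emph{right sign}, namely that it is $\geq 0$, so that replacing the equality by an inequality is legitimate. Concretely, I would take a sequence of cut-off functions $\varphi_\e$ that agree with $\eta$ on $\{x\in\Sigma : \dist(x,\partial\Sigma)>\e\}$ and are smoothly truncated to zero in an $\e$-neighborhood of $\partial\Sigma$, with $|\nabla\varphi_\e|\lesssim \e^{-1}$ and $\nabla\varphi_\e$ pointing in the direction of the inner normal $-\nu$ on $\partial\Sigma$. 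Plugging $\varphi_\e$ into \eqref{BC_integral} and letting $\e\to 0$, the interior integrals converge to the corresponding integrals against $\eta$ (using the local integrability furnished by Lemma \ref{lemma_caccioppoli}, in particular $a(\nabla v)\in W^{1,2}_{\rm loc}(\overline\Sigma)$ and $V(\nabla v)\in W^{1,2}_{\rm loc}(\overline\Sigma)$), while the extra terms involving $\nabla\varphi_\e$ concentrate on $\partial\Sigma$ and produce, in the limit, a boundary integral.

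The key step is therefore to identify the limiting boundary integral and show it is non-negative. On $\partial\Sigma$ we have $\nabla\varphi_\e \to -(\,\cdot\,)\nu$ in the appropriate weak sense, so the right-hand side of \eqref{BC_integral} contributes, in the limit, a term of the form
\begin{equation*}
\int_{\partial\Sigma}\bigl(v^\gamma S_{ij}^2(W)V_{\xi_i}(\nabla v)+\gamma(p-1)v^{\gamma-1}V(\nabla v)V_{\xi_j}(\nabla v)\bigr)\nu_j\, d\sigma.
\end{equation*}
The Neumann condition $a(\nabla v)\cdot\nu = 0$ on $\partial\Sigma$, i.e.\ $V_{\xi_j}(\nabla v)\nu_j = 0$ (recall $a=\nabla V$ up to the constant in \eqref{plus}), makes the second summand vanish, and also forces $\nabla v$ to be tangent to $\partial\Sigma$. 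For the first summand, using $S_{ij}^2(W)\nu_j = -W_{ji}\nu_j + \nu_i\tr(W)$ together with the fact that $a(\nabla v)$ is a tangent field whose tangential derivative of $a(\nabla v)\cdot\nu$ vanishes (exactly the computation already carried out around \eqref{int2}--\eqref{int4} in the proof of Proposition \ref{Lemma_Caccioppoli}), the quantity $S_{ij}^2(W)V_{\xi_i}(\nabla v)\nu_j$ reduces — after discarding the term $V_{\xi_i}(\nabla v)\nu_i\tr(W)=0$ — to $-W_{ji}\nu_j V_{\xi_i}(\nabla v)$, which equals $-\,\mathrm{II}_{\partial\Sigma}\bigl(a(\nabla v),a(\nabla v)\bigr)$ up to a positive factor. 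Since $\Sigma$ is convex, the second fundamental form $\mathrm{II}_{\partial\Sigma}$ is non-negative definite (this is precisely where convexity enters, as in \eqref{convexity1}), so this contribution to the right-hand side of \eqref{BC_integral} is $\leq 0$ when it appears with the sign dictated by the identity; equivalently, the boundary term one must drop to pass from equality to the inequality \eqref{BC_integral_cut-off} has the favorable sign, yielding the stated $\geq$.

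The main obstacle I anticipate is the justification of the limit $\e\to 0$ at the level of regularity we actually have: $v$ is only $C^{1,\theta}_{\rm loc}$, and $a(\nabla v)$, $V(\nabla v)$ are only $W^{1,2}_{\rm loc}(\overline\Sigma)$, so the pointwise manipulations of Lemma \ref{lemma_BC} are not available near $\partial\Sigma$, and the boundary integral above must be interpreted as a limit of interior integrals rather than as a genuine trace integral. The clean way around this is to never write the boundary integral explicitly: run the same regularization in domains $\Sigma_k$ with smooth boundary (as in the proof of Proposition \ref{Lemma_Caccioppoli}), where $u_k$, hence $v_k := u_k^{-p/(n-p)}$, is smooth enough up to $\partial\Sigma_k$ that Lemma \ref{lemma_BC} applies pointwise and the divergence theorem produces an honest boundary integral; show that integral is $\leq 0$ using convexity of $\Sigma_k$ exactly as above; and then pass to the limit $k\to\infty$ using $v_k\to v$ in $C^1_{\rm loc}$ and the uniform $W^{1,2}_{\rm loc}$ bounds on $a(\nabla v_k)$ from \eqref{robertocarlos}. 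The inequality is preserved in the limit, giving \eqref{BC_integral_cut-off}. The decay estimates \eqref{v_asympt}--\eqref{nabla_v_asympt} and \eqref{caccioppoli_asymptotic} are not needed here since $\eta$ has compact support; they will be used later when $\eta$ is taken to approximate $1$.
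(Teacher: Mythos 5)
Your strategy is essentially the paper's proof: apply the integral identity of Lemma \ref{lemma_integr1} with test functions $\eta\zeta_\delta$ truncated near $\partial\Sigma$, identify the limiting boundary term, show it has a sign using the Neumann condition, the vanishing of the tangential derivative of $a(\nabla v)\cdot\nu$ along the tangent field $a(\nabla v)$, and the convexity of $\Sigma$, and justify the pointwise boundary computation by regularizing (smooth cones $\Sigma_k$ and approximating problems) exactly as in Proposition \ref{Lemma_Caccioppoli}.

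One point needs fixing: your sign bookkeeping on the boundary term is inconsistent. With $\zeta_\delta$ increasing from $0$ on $\partial\Sigma$ to $1$ inside, the limit of the identity reads
\begin{equation*}
\int_\Sigma F\,\eta \;=\; -\int_\Sigma \nabla\eta\cdot L \;+\; \int_{\partial\Sigma}\eta\, L\cdot\nu\, d\sigma,
\qquad L_j = v^\gamma S^2_{ij}(W)V_{\xi_i}(\nabla v)+\gamma(p-1)v^{\gamma-1}V(\nabla v)V_{\xi_j}(\nabla v),
\end{equation*}
so \eqref{BC_integral_cut-off} requires the boundary contribution to be \emph{nonnegative}, not nonpositive. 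And it is: on $\partial\Sigma$, since $a(\nabla v)\cdot\nu=0$ one has $S^2_{ij}(W)a_i\nu_j=-a_i\partial_i\bigl(a_j(\nabla v)\bigr)\nu_j$, and differentiating $a(\nabla v)\cdot\nu\equiv 0$ along the tangent direction $a(\nabla v)$ gives $a_i\partial_i(a_j)\nu_j=-a_ia_j\partial_i\nu_j=-\mathrm{II}\bigl(a(\nabla v),a(\nabla v)\bigr)$, hence $S^2_{ij}(W)a_i\nu_j=+\mathrm{II}\bigl(a(\nabla v),a(\nabla v)\bigr)\ge 0$ by convexity — the opposite of your claim that it equals $-\mathrm{II}(a,a)$ up to a positive factor and that the contribution is $\le 0$. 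Taken literally, a nonpositive boundary contribution on the right-hand side would give $\int_\Sigma F\eta\le-\int_\Sigma\nabla\eta\cdot L$, i.e.\ the reverse of \eqref{BC_integral_cut-off}; with the corrected sign (and keeping the weight $\eta\ge 0$, which you dropped from the boundary integral) the term you discard is nonnegative and the stated inequality follows. Everything else — the interior convergence via the $W^{1,2}_{\rm loc}$ information and the approximation in $\Sigma_k$ to make the computation up to the boundary legitimate — matches the paper's argument.
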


\begin{proof}
As in the proof of Proposition \ref{Lemma_Caccioppoli}, this proof requires a regularization argument considering the solutions of the approximating problems
\begin{equation*}
\begin{cases}
\diver(a^\ell(\nabla v_{k,\ell})) =f(v,\nabla v) & \text{ in } \Sigma_k
\\
a^\ell(\nabla v_{k,\ell})\cdot\nu=0 & \text{ on } \partial\Sigma_k \,,
\end{cases}
\end{equation*}
where $a^\ell$ are defined as in \eqref{ajz} and $f(v,\nabla v)$ is given by \eqref{f_v_nablav}. Note that, since $v \in C^{1,\theta}_{\rm loc}(\Sigma \setminus \{\mathcal O\})$, 
the functions $v_{k,\ell}$ are of class $C^{2,\theta}_{\rm loc}$ in $\overline\Sigma_k\setminus \{\mathcal O\}$, and this allows one to perform all the desired computations on the functions $v_{k,\ell}$, and then let $\ell$ and $k$ to infinity. 
Since this approximation argument is very similar to the one in the proof of Proposition \ref{Lemma_Caccioppoli},
to simplify the notation and emphasize the main ideas we shall work directly with $v$, assuming that $v$ is of class $C^{2,\theta}_{\rm loc}$ in $\overline\Sigma\setminus \{\mathcal O\}$ in order to justify all the computations.

Set
\begin{equation}\label{F}
F= 2v^\gamma S^2(W)+\gamma(\gamma-1)p(p-1)v^{\gamma-2}V^2(\nabla v)+\gamma(2p-1)v^{\gamma-1}V(\nabla v)\Delta^H_p v
\end{equation}
and $L=(L_1,\ldots,L_n)$ with
$$
L_j = v^\gamma S_{ij}^2(W)V_{\xi_i}(\nabla v)+\gamma(p-1)v^{\gamma-1}V(\nabla v)V_{\xi_j}(\nabla v)
$$
for $j=1,\ldots , n$. 
Then we apply Lemma \ref{lemma_integr1} with $\varphi=\eta \zeta_\delta$, where $\eta \in C^\infty _c(\R^n)$ is a cut-off function as in the statement, and $\zeta_\delta \in C^\infty_c(\Sigma)$ is a cut-off function of the distance from $\partial \Sigma$ that converges to $1$ inside $\Sigma$ as $\delta\to 0$. In this way, as in the proof of \eqref{11.16}, letting $\delta \to 0$ the term involving $\nabla\zeta_\delta$
gives rise to a boundary term: more precisely, we obtain
\begin{equation}\label{BC_integral_3}
\int_{\Sigma } F \eta  = - \int_{\Sigma }\nabla \eta \cdot L + \int_{\partial \Sigma} \eta L \cdot \nu d \sigma \,.
\end{equation}
Now, to conclude the proof, we need to show that the last integral in \eqref{BC_integral_3} is non-negative; indeed, for $x \in \partial\Sigma\setminus \{\mathcal O\}$,  by using the explicit expression of $L$ and of $S^2_{ij}(W)$ we get
\begin{equation} \label{11febr}
\begin{aligned}
L&(x) \cdot \nu(x)= \\
 &v^\gamma(x) a(\nabla v(x))\cdot\nu(x)\left[{\tr}(W)(x)+\gamma(p-1)v^{-1}(x)V(\nabla v(x))\right] \\
 &-v^\gamma(x)\partial_i (a_j(\nabla v(x))) a_i(\nabla v(x))\nu_\ell(x)\, ,
\end{aligned}
\end{equation}
where we used that $w_{ji} (x)= \partial_i a_j(\nabla v(x))$ and $V_{\xi_i} = a_i$.  

We notice now that $\partial_i\nu_\ell (x)$ is the second fundamental form of $\partial\Sigma$ at $x$, which is non-negative definite by the convexity of $\Sigma$. Hence\
\begin{equation}\label{convexity_bis}
\partial_i\nu_\ell (x)a_j(\nabla v(x))a_i(\nabla v(x))\geq 0.
\end{equation}
%
From \eqref{11febr} and \eqref{convexity_bis} we get
\begin{equation*}
\begin{aligned}
L(x) \cdot \nu(x)&\geq v^\gamma(x)a(\nabla v(x))\cdot\nu(y)\left[{\tr}(W)(x)+\gamma(p-1)v^{-1}(x)V(\nabla v(x))\right] \\
&-v^\gamma(x)\nabla (a(\nabla v(x))\cdot\nu(y))\cdot a(\nabla v(x)).
\end{aligned}
\end{equation*}
Now, since $a(\nabla v)\cdot\nu=0$ on $\partial\Sigma$, the first term on the right-hand side vanishes. Moreover, since the tangential derivative of $a(\nabla v)\cdot\nu$ vanishes on $\partial\Sigma$ and $a(\nabla v)$ is a tangential vector-field, also the second term vanishes.
This proves that  $L\cdot \nu \geq 0$ on $\partial\Sigma\setminus \{\mathcal O\}$, that together with \eqref{BC_integral_3} (recall that $\eta\geq 0$)
concludes the proof.
\end{proof}

\begin{proposition}\label{mattinata}
	Let $v$ be given by \eqref{def_v}, let $V$ be as in \eqref{plus}, and $W$ as in \eqref{def_W}.  Then
	\begin{equation}\label{12.21}
	\begin{aligned}
	\int_{\Sigma}\left(2v^\gamma S^2(W)+\gamma(\gamma-1)p(p-1)v^{\gamma-2}V^2(\nabla v)+\gamma(2p-1)v^{\gamma-1}V(\nabla v)\Delta^H_p v\right) \geq 0 
	\end{aligned}
	\end{equation}
	for any $\gamma<-\frac{n(p-1)}{p} $.
\end{proposition}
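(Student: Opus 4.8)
The plan is to test inequality \eqref{BC_integral_cut-off} of Lemma \ref{lemma_cut-off} against a family of cut-offs increasing to the constant $1$, and then let them saturate. Fix $\eta_R\in C^\infty_c(\mathbb{R}^n)$ with $0\le\eta_R\le1$, $\eta_R\equiv1$ on $B_R$, $\mathrm{supp}\,\eta_R\subset B_{2R}$ and $|\nabla\eta_R|\le C/R$, and set $A_R:=B_{2R}\setminus B_R$. Write $F$ for the integrand on the left of \eqref{12.21} and $L=(L_j)$ for the field $L_j:=v^\gamma S^2_{ij}(W)V_{\xi_i}(\nabla v)+\gamma(p-1)v^{\gamma-1}V(\nabla v)V_{\xi_j}(\nabla v)$ appearing on the right of \eqref{BC_integral_cut-off}. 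Lemma \ref{lemma_cut-off} then gives $\int_\Sigma F\,\eta_R\ge-\int_\Sigma\nabla\eta_R\cdot L\ge-\int_{A_R\cap\Sigma}|\nabla\eta_R|\,|L|$, so it suffices to establish: (a) $\int_{A_R\cap\Sigma}|\nabla\eta_R|\,|L|\to0$ as $R\to\infty$; and (b) $F\in L^1(\Sigma)$. Indeed, (b) and dominated convergence give $\int_\Sigma F\,\eta_R\to\int_\Sigma F$, which together with (a) yields $\int_\Sigma F\ge0$.

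For (a), I would work on the annulus $A_R$ using the sharp decay of $u$ from Proposition \ref{Vetosi_anis}, which via \eqref{def_v} gives $v(x)\simeq|x|^{p/(p-1)}$ and $|\nabla v(x)|\lesssim|x|^{1/(p-1)}$ for $|x|$ large, hence on $A_R$ the pointwise bounds $v^\gamma\lesssim R^{\gamma p/(p-1)}$, $V(\nabla v)=\tfrac1p H^p(\nabla v)\lesssim R^{p/(p-1)}$, and $|V_{\xi_i}(\nabla v)|=|a_i(\nabla v)|\lesssim|\nabla v|^{p-1}\lesssim R$. Splitting $L$ into the lower-order summand and the one containing $W=\nabla[a(\nabla v)]$: the lower-order term is $\lesssim|\gamma|(p-1)v^{\gamma-1}V(\nabla v)|a(\nabla v)|\lesssim R^{1+\gamma p/(p-1)}$ on $A_R$, so its contribution is $\lesssim R^{-1}\cdot R^n\cdot R^{1+\gamma p/(p-1)}=R^{n+\gamma p/(p-1)}$; for the $W$-term I would use $|S^2_{ij}(W)|\lesssim|W|$ together with Cauchy--Schwarz,
\[
\int_{A_R\cap\Sigma}|\nabla\eta_R|\,v^\gamma|W|\,|a(\nabla v)|\ \lesssim\ \frac1R\Big(\int_{B_{2R}\cap\Sigma}|W|^2\Big)^{1/2}\Big(\int_{A_R\cap\Sigma}v^{2\gamma}|x|^2\Big)^{1/2},
\]
bounding the first factor by $R^{n/2}$ via the Caccioppoli estimate \eqref{caccioppoli_asymptotic} with $\sigma=0$ and the second by $\big(R^{2\gamma p/(p-1)}R^{2}R^{n}\big)^{1/2}$, so that the $W$-contribution is again $\lesssim R^{n+\gamma p/(p-1)}$. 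Both contributions vanish as $R\to\infty$ precisely because $\gamma<-\tfrac{n(p-1)}{p}$ makes $n+\gamma\tfrac{p}{p-1}<0$; this is the only place the hypothesis on $\gamma$ enters.

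For (b), since $|S^2(W)|\lesssim|W|^2$ (see \eqref{eq200}), the first term of $F$ is controlled by $v^\gamma|W|^2$, whose integral over $B_r\cap\Sigma$ is $\le C(1+r^{n+\gamma p/(p-1)})$ by \eqref{caccioppoli_asymptotic} with $\sigma=\gamma$; as $n+\gamma\tfrac{p}{p-1}<0$ this is uniform in $r$, so $v^\gamma S^2(W)\in L^1(\Sigma)$. For the other two terms of $F$ I would insert $\Delta^H_p v=f(v,\nabla v)$ from \eqref{f_v_nablav}; the asymptotics give $v^{\gamma-2}V^2(\nabla v)\lesssim|x|^{\gamma p/(p-1)}$ and $v^{\gamma-1}V(\nabla v)f(v,\nabla v)\lesssim|x|^{\gamma p/(p-1)}$ for $|x|$ large, which is integrable at infinity exactly when $\gamma\tfrac{p}{p-1}<-n$, while integrability near the vertex $\mathcal O$ follows from the local regularity already recorded ($v\in C^{1,\theta}_{\rm loc}(\Sigma\setminus\{\mathcal O\})$ and $a(\nabla v)\in W^{1,2}_{\rm loc}(\overline\Sigma)$ by Lemma \ref{lemma_caccioppoli}). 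Thus $F\in L^1(\Sigma)$ and the argument closes.

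The main obstacle I anticipate is the exponent bookkeeping in step (a): one must check that the powers of $R$ produced by \eqref{caccioppoli_asymptotic} and by the pointwise asymptotics combine to exactly $R^{n+\gamma p/(p-1)}$ — in particular that the auxiliary weight used in the Cauchy--Schwarz splitting (here $\sigma=0$) drops out — so that nothing is lost and the borderline value $\gamma=-\tfrac{n(p-1)}{p}$ is matched. A secondary technical point is the $L^1$-integrability of $v^\gamma S^2(W)$ (which uses \eqref{caccioppoli_asymptotic} exactly at the critical weight $\sigma=\gamma$) and of the lower-order terms up to the vertex of $\Sigma$.
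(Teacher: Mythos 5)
Your proposal is correct and takes essentially the same route as the paper: the paper also tests Lemma \ref{lemma_cut-off} with radial cut-offs at scale $R$, kills the annulus term via exactly the same exponent bookkeeping (the lower-order part directly, the $W$-part by Cauchy--Schwarz with $\|W\|_{L^2}\lesssim R^{n/2}$ from \eqref{caccioppoli_asymptotic} with $\sigma=0$), obtaining the same decisive power $R^{n+\gamma p/(p-1)}$, and then passes to the limit on the left-hand side. The only cosmetic difference is how that limit is justified: the paper invokes the pointwise bounds $|\Delta^H_p v|\le C$ and $S^2(W)\le C$ (Newton's inequality), whereas you dominate $v^\gamma S^2(W)$ by $v^\gamma|W|^2$ and use \eqref{caccioppoli_asymptotic} with $\sigma=\gamma$ — both yield the needed $L^1$ control.
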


\begin{proof} From \eqref{eq_v_bis_anis}, \eqref{v_asympt}, and \eqref{nabla_v_asympt} we know that $|\Delta^H_p v|\leq C$ in $\Sigma$, and from Newton's inequality \eqref{newtonIneq} we also have $|S^2(W)|\leq C$ (recall that ${\tr}(W)=\Delta^H_p v$).

Now, let $\eta$ be a non-negative radial cut-off function such that $\eta=1$ in $B_R$, $\eta=0$ outside $B_{2R}$, and $|\nabla\eta|\leq \frac{2}{R}$.
Thanks to \eqref{v_asympt} and \eqref{nabla_v_asympt}, we can take the limit as $R\to \infty$ in the left-hand side of \eqref{BC_integral_cut-off} to obtain the left-hand side of \eqref{12.21}. Hence, in order to prove \eqref{12.21} it is enough to show that
	\begin{equation}\label{va_a_0}
	\lim_{R\rightarrow\infty}\int_{E_R}\eta_{j}\bigl(v^\gamma S_{ij}^2(W)V_{\xi_i}(\nabla v)+\gamma(p-1)v^{\gamma-1}V(\nabla v)V_{\xi_j}(\nabla v)\bigr)=0 \, ,
	\end{equation}
	where we set for simplicity
	$$
	E_R:=\Sigma\cap(B_{2R}\setminus B_R)\, 
	$$
	Since $|S^2_{ij}(W)|\leq |W|$, using  Holder's inequality we get 
	$$
	\left|\int_{E_R}\eta_{j}v^\gamma S_{ij}^2(W)V_{\xi_i}(\nabla v)\right|\leq \dfrac{c(n)}{R} \|W\|_{L^2(E_R)}\left(\int_{E_R} v^{2\gamma}|\nabla V(\nabla v)|^2\right)^{\frac{1}{2}}\, .
	$$
	Observe that \eqref{caccioppoli_asymptotic} yields
	$$
	\|W\|^2_{L^2(E_R)}\leq CR^{n}\, .
	$$
	Also, from \eqref{v_asympt} and \eqref{nabla_v_asympt} we have
	$$
	\int_{E_R} v^{2\gamma}|\nabla V(\nabla v)|^2\leq C R^{\frac{2\gamma p}{p-1}+n+2} \, .
	$$
	Hence, since by assumption $\gamma<-\frac{n(p-1)}{p}$, this proves that
	$$
	\lim_{R\rightarrow\infty}\int_{E_R}\eta_{j}v^\gamma S_{ij}^2(W)V_{\xi_i}(\nabla v)=0\,.
	$$
	Analogously,  using \eqref{v_asympt} and \eqref{nabla_v_asympt}, the second term in \eqref{va_a_0} can be bounded as
	\begin{equation}
\begin{aligned}
\left|\int_{E_R}\eta_{j}v^{\gamma-1}V(\nabla v)V_{\xi_j}(\nabla v)\right|\leq C R^{\frac{p\gamma}{p-1}+n}\,,
\end{aligned}
	\end{equation}
which also goes to zero as $R\to\infty$ since $\gamma<-\frac{n(p-1)}{p}$. This proves \eqref{va_a_0} and hence \eqref{12.21}.
\end{proof}

\subsection{Conclusion} We multiply \eqref{eq_v_bis_anis} by $v^{-n}$ and integrate over $\Sigma$. By using the divergence theorem, the boundary condition in \eqref{eq_v_bis_anis}, and the decay estimates \eqref{v_asympt} and \eqref{nabla_v_asympt}, we get
\begin{equation}\label{=0}
\left(\dfrac{p}{n-p}\right)^{p-1}\int_{\Sigma}v^{-n-1}-\dfrac{n}{p}\int_{\Sigma}v^{-n-1} H^p(\nabla v)=0\, .
\end{equation}
Now we use Newton's inequality applied to $W$ in \eqref{12.21}. More precisely, since $\tr (W) = \Delta_p^H v$, we have
\begin{equation}\label{Newton_W}
2S^2(W)\leq \dfrac{n-1}{n} (\Delta^H_p v)^2 \, ,
\end{equation}
and from \eqref{12.21} we obtain 
\begin{equation}\label{17.42}
\begin{aligned}
\int_{\Sigma}\left(\dfrac{n-1}{n}v^{\gamma} (\Delta^H_p v)^2+\gamma(\gamma-1)p(p-1)v^{\gamma-2}V^2(\nabla v)+\gamma(2p-1)v^{\gamma-1}V(\nabla v)\Delta^H_p v\right) \geq 0 
\end{aligned}
\end{equation}
for any $\gamma<-\frac{n(p-1)}{p}$. Since $p<n$ we can choose $\gamma=1-n$ in \eqref{17.42}, and using \eqref{eq_v_bis_anis}, \eqref{f_v_nablav},
and \eqref{plus}, we obtain 
\begin{equation}\label{quasi_equality}
\left(\dfrac{p}{n-p}\right)^{p-1}\int_{\Sigma}v^{-n-1} -\dfrac{n}{p}\int_{\Sigma}v^{-n-1}H^p(\nabla v)\geq 0\, .
\end{equation}
Recalling \eqref{=0}, this implies that the equality case must hold in \eqref{quasi_equality}. Hence the equality case must hold in \eqref{Newton_W} a.e., which implies that 
\begin{equation}\label{=_anis}
W(x)=\lambda(x){\rm Id} \quad \text{ for a.e. }\, x\in\Sigma\, ,
\end{equation}
for some function $\lambda:\Sigma\to \R$, where $I$ is the identity matrix. 

Now we show that the function $\lambda$ is constant. Since 
$$
\lambda(x)=\frac{1}{n} \tr (W) =  \frac{1}{n}\Delta^H_p v(x)=\frac{1}{n} f(v,\nabla v)
$$ 
(see \eqref{eq_v_bis_anis}), and since $v\in C^{1,\theta}_{\rm loc}(\Sigma)$, we get that $\lambda\in C^{0,\theta}_{\rm loc}(\Sigma)$. 
Moreover, elliptic regularity theory yields that $v\in C^{2,\theta}_{\rm loc}(\Sigma\cap\{\nabla v\neq 0\})$, which implies that $\lambda\in C^{1,\theta}_{\rm loc}(\Sigma\cap\{\nabla v\neq 0\})$. From \eqref{=_anis} we have that 
\begin{equation}\label{=_anis2}
\partial_i(a_j(\nabla v(x)))=\lambda(x)\delta_{ij}
\end{equation}
for $i,j\in \{1,\ldots,n\}$, which implies that $a(\nabla v)\in C^{2,\theta}_{\rm loc}(\Sigma\cap\{\nabla v\neq 0\})$. 
 
Then, given $i \in  \{1,\ldots,n\}$, choosing $j \neq i$ and using \eqref{=_anis2} we obtain
$$
\partial_i\lambda(x) = \partial_i\bigl(\partial_j(a_j(\nabla v(x)))\bigr)=\partial_j\bigl(\partial_i(a_j(\nabla v(x)))\bigr)=0
$$
for any $x \in \Sigma\cap\{\nabla v\neq 0\}$, which implies that $\lambda$ is constant on each connected component of $\Sigma\cap\{\nabla v\neq 0\}$. Since $\lambda$ is continuous in $\Sigma$ and $\{\nabla v= 0\}$ has no interior points (this follows easily from \eqref{eq_v_bis_anis}), we deduce that $\lambda$ is constant. In particular, recalling \eqref{=_anis}, we get
$$
\nabla[a(\nabla v(x))]=W(x)=\lambda \,I \qquad \text{in }\Sigma\,.
$$ 
Hence $a(\nabla v(x))=\lambda (x-x_0)$ for some $x_0\in\overline{\Sigma}$, and from the boundary condition in \eqref{eq_v_bis_anis} we obtain that $x_0\in\partial\Sigma$.  This implies that $v(x)=c_1+c_2 H_0(x-x_0)^{\frac{p}{p-1}}$,
or equivalently (recalling \eqref{def_v}) $u(x) = U_{\mu,x_0}^H (x)$ for some $\mu>0$.  
Finally, it is clear that:\\
- if $\Sigma = \mathbb{R}^n$ and $x_0$ may be a generic point in $\mathbb{R}^n$;\\
- if $k\in\{1,\dots,n-1\}$ then $x_0\in\mathbb{R}^k\times\mathcal{\{\mathcal O\}}$; \\
- if $k=0$ then $x_0=\mathcal{O}$.

This completes the proof of Theorem \ref{thm_main}.

\appendix

\section{Sharp anisotropic Sobolev inequalities with weight in convex cones}\label{appendix_sobolev}
In this appendix we prove a sharp version of the anisotropic Sobolev inequality in cones by suitably adapting the optimal transportation proof of the Sobolev inequality in \cite[Theorem 2]{CNV}.
As we shall see, the proof not only applies to the case of arbitrary norms, but it also allows us to cover a large class of weights. In particular, our result extends the weighted isoperimetric inequalities from \cite[Theorem 1.3]{CabreRosSerra} to the full Sobolev range $p \in (1,n)$ (note that the case $p=1$ can be recovered letting $p\to 1^+$).

 \begin{theorem} \label{thm_sob_ineq}
Let $p\in(1,n)$. Let $\Sigma$ be a convex cone and $H$ a norm in $\mathbb{R}^n$. Let $w \in C^0(\overline \Sigma)$ be  positive in $\Sigma$, homogeneous of degree $a \geq 0$, and such that $w^{1/a}$ is concave in case $a>0$. Then for any $f\in \mathcal{D}^{1,p}(\Sigma)$ we have 
\begin{equation} \label{sobolev_thm}
\biggl(\int_{\Sigma}|f(x)|^{\beta}w(x)\, dx\biggr)^{p/\beta} \leq C_\Sigma(n,p,a,H,w) \int_{\Sigma}H^p(\nabla f(x))\, w (x) \,dx
\end{equation}
where 
\begin{equation} \label{beta_thm}
\beta=\frac{p(n+a)}{n+a-p} \,.
\end{equation}
Moreover, inequality \eqref{sobolev_thm} is sharp and the equality is attained if and only if $f=U_{\lambda,x_0}^{H,a} $, where
\begin{equation} \label{talentiane_pa_H}
U_{\lambda,x_0}^{H,a} (x) := \left( \frac{\lambda^{\frac{1}{p-1}}c(n,p,a,H,w) }{\lambda^\frac{p}{p-1} + H_0(x-x_0)^\frac{p}{p-1} } \right)^{\frac{n+a-p}{p}}
\end{equation}
with $\lambda >0$ , and $H_0(\zeta):=\sup_{H(\xi)=1}\zeta\cdot \xi$ is the dual norm of $H$.

Furthermore, writing $\Sigma=\mathbb{R}^k\times\mathcal{C}$ with $k\in\{0,\dots,n\}$ and with $\mathcal C \subset \mathbb{R}^{n-k}$ a convex cone that does not contain a line, then:
\begin{itemize}
	\item[$(i)$] if $k=n$ then $\Sigma = \mathbb{R}^n$ and $x_0$ may be a generic point in $\mathbb{R}^n$;
	\item[$(ii)$] if $k\in\{1,\dots,n-1\}$ then $x_0\in\mathbb{R}^k\times\mathcal{\{\mathcal O\}}$;
	\item[$(iii)$] if $k=0$ then $x_0=\mathcal{O}$.
\end{itemize} 
 
\end{theorem}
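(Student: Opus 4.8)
The plan is to run the mass-transportation proof of Cordero-Erausquin--Nazaret--Villani \cite{CNV}, enriched with the weighted Jacobian estimate of Cabr\'e--Ros-Oton--Serra \cite{CabreRosSerra}, and to exploit the convexity of $\Sigma$ to control the boundary term produced by integration by parts. Write $p'=p/(p-1)$ and $D=n+a$, and let $g:=U^{H,a}_{1,\mathcal O}$ be the conjectured extremal centred at the origin; its decay together with $p<n\le D$ guarantees that $g^\beta w$ and $H_0(\cdot)^{p'}g^\beta w$ are integrable on $\Sigma$. By density it suffices to prove \eqref{sobolev_thm} for $f\in C^\infty_c(\overline\Sigma)$, $f\ge 0$, $f\not\equiv 0$; since both sides of \eqref{sobolev_thm} get multiplied by $\lambda^p$ when $f$ is replaced by $\lambda f$, we may normalize so that $\int_\Sigma f^\beta w\,dx=\int_\Sigma g^\beta w\,dx=:M$. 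Then I would introduce the Brenier map: regard $\mu:=M^{-1}f^\beta w\,dx$ and $\nu:=M^{-1}g^\beta w\,dx$ as probability measures supported in the convex set $\overline\Sigma$, and let $\varphi$ be convex with $T:=\nabla\varphi$ pushing $\mu$ onto $\nu$. Then $T$ is differentiable $\mu$-a.e.\ with $DT=D^2\varphi$ symmetric and nonnegative there, $T(\Sigma)\subseteq\overline\Sigma$ (the image lands in the closed convex hull of $\operatorname{spt}\nu$), and the Monge--Amp\`ere identity
\[
f^\beta(x)\,w(x)=g^\beta(T(x))\,w(T(x))\,\det DT(x)
\]
holds a.e.\ in $\Sigma$. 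As in \cite{CNV}, the bound $(\det DT)^{1/n}\le\tfrac1n\operatorname{tr}DT$ must be paired with the fact that the absolutely continuous part of $D^2\varphi$ is dominated, as a measure, by the distributional Hessian of $\varphi$, so that the integrations by parts below are licit even though $\varphi$ is only convex.

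The weighted geometric--arithmetic step comes next. Using that $w$ is $a$-homogeneous with $w^{1/a}$ concave, together with the Euler identity $\nabla w(x)\cdot x=a\,w(x)$, one obtains (exactly as in \cite[Theorem 1.3]{CabreRosSerra})
\[
\bigl(\det DT(x)\bigr)^{1/D}\Bigl(\tfrac{w(T(x))}{w(x)}\Bigr)^{1/D}\le\frac1D\,\frac{\operatorname{div}\bigl(w\,T\bigr)(x)}{w(x)} .
\]
Inserting the Monge--Amp\`ere identity and using $T_\#\mu=\nu$,
\[
\int_\Sigma g^{\beta-\beta/D}w\,dx=\int_\Sigma g(T(x))^{-\beta/D}f(x)^\beta w(x)\,dx\le\frac1D\int_\Sigma f^{\beta-\beta/D}\operatorname{div}(w\,T)\,dx .
\]
Integrating by parts, the bulk term gives $-\tfrac{\beta-\beta/D}{D}\int_\Sigma f^{\beta-\beta/D-1}(\nabla f\cdot T)\,w\,dx$, while the boundary term is $\tfrac1D\int_{\partial\Sigma}f^{\beta-\beta/D}\,w\,(T\cdot\nu)\,d\sigma$. \emph{Here the convexity of the cone enters, and this is the new point with respect to the Euclidean case:} for $x\in\partial\Sigma$ the supporting hyperplane of the convex cone $\Sigma$ at $x$ passes through the vertex $\mathcal O$, so $\overline\Sigma\subseteq\{y:\ y\cdot\nu(x)\le0\}$; since $T(x)\in\overline\Sigma$ we get $T(x)\cdot\nu(x)\le0$, and the boundary term is $\le0$ and may be discarded.

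It then remains to estimate $\nabla f\cdot T$ via the duality between $H$ and $H_0$ and to apply H\"older's inequality with exponents $p$ and $p'$:
\[
-\int_\Sigma f^{\beta-\beta/D-1}(\nabla f\cdot T)\,w\,dx\le\Bigl(\int_\Sigma H^p(\nabla f)\,w\,dx\Bigr)^{1/p}\Bigl(\int_\Sigma f^{(\beta-\beta/D-1)p'}H_0(T)^{p'}w\,dx\Bigr)^{1/p'}.
\]
The arithmetic identity $(\beta-\beta/D-1)\,p'=\beta$ — which is precisely what singles out the value \eqref{beta_thm} of $\beta$ — combined with $T_\#\mu=\nu$ turns the last factor into the finite constant $\bigl(\int_\Sigma H_0(y)^{p'}g^\beta(y)w(y)\,dy\bigr)^{1/p'}$. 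Collecting all the inequalities and recalling the normalization $\int_\Sigma f^\beta w=M$, one reaches \eqref{sobolev_thm} with an explicit constant $C_\Sigma$ depending only on $n,p,a,H,w$; taking $f=g$ makes $T=\operatorname{id}$ and turns every inequality above into an equality, so that constant is sharp. For the rigidity statement, equality forces equality in the weighted geometric--arithmetic inequality, hence $DT\equiv{\rm Id}$ a.e., so $T$ is a translation $x\mapsto x+x_0$; the inclusion $T(\Sigma)\subseteq\overline\Sigma$ gives $x_0\in\overline\Sigma$, and then the equality cases in the concavity of $w^{1/a}$ and in H\"older's inequality, combined with the product structure $\Sigma=\mathbb R^k\times\mathcal C$ and the homogeneity of $w$, force $x_0$ to be located as in (i)--(iii) and identify $f$ with $U^{H,a}_{\lambda,x_0}$.

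The main obstacle, apart from the routine but lengthy bookkeeping of constants, is twofold: first, establishing the favourable sign of the boundary term — which rests on the elementary geometric fact that a convex cone lies on one side of each of its supporting hyperplanes through the vertex, and on the Brenier map sending $\Sigma$ into $\overline\Sigma$; second, making the whole transport scheme rigorous when the weight $w$ degenerates along $\partial\Sigma$ and at $\mathcal O$ — in particular, justifying the Monge--Amp\`ere/distributional-Hessian manipulations and carrying out the equality-case analysis (which must be intertwined with the geometry of $\Sigma$ to pin down $x_0$) in this degenerate setting.
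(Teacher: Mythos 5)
Your proposal follows essentially the same route as the paper's Appendix A: the optimal-transport argument of \cite{CNV} combined with the weighted arithmetic--geometric estimate of \cite{CabreRosSerra}, the favourable sign of the boundary term coming from $T(\Sigma)\subseteq\overline\Sigma$ together with the convexity of the cone, and the same H\"older/exponent bookkeeping $(\gamma-1)p'=\beta$. The only slip is in the rigidity sketch: equality forces $DT$ to be a constant multiple $\lambda\,{\rm Id}$, i.e.\ $T(x)=\lambda(x-x_0)$ rather than a pure translation (the $\beta$-mass of $U^{H,a}_{\lambda,x_0}$ is independent of $\lambda$, so your normalization does not pin $\lambda=1$), which is exactly the form the paper obtains by arguing as in \cite[Proposition 6]{CNV} after first establishing indecomposability of the support of the minimizer.
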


\begin{proof}
We aim at proving that for any nonnegative $f,g \in L^{\beta}( \Sigma)$ with $\|f\|_{L^{\beta}(\Sigma)}=\|g\|_{L^{\beta}(\Sigma)}$ and such that $\nabla f \in L^p(\Sigma)$, we have that
\begin{equation}\label{20.01}
\int_{\Sigma}g^\gamma w\,dx\leq \dfrac{\gamma}{n+a}\left(\int_{\Sigma} H^p(\nabla f)\,w\,dx \right)^{1/p}\left(\int_{\Sigma}H_0^{p'}g^\beta w\,dx\right)^{1/p'}\, ,
\end{equation} 
with equality if $f=g=U_{\lambda,x_0}^{H,a} $. The value of $\gamma$ will be specified later. As shown in \cite{CNV}, inequality \eqref{20.01} implies the Sobolev inequality \eqref{sobolev_thm}.

Let $F$ and $G$ be probability densities on $\Sigma$ and let $T:\Sigma\rightarrow\Sigma$ be the optimal transport map (see e.g. \cite{Villani}).\footnote{ 
As explained in \cite{FigIndrei} (see also \cite{FigMagPrat}), the argument that follows can be made rigorous using the fine properties of $BV$ functions (we note that $T$ belongs to $BV$, being the gradient of a convex function).
However, to emphasize the main ideas, we shall write the whole argument when $T:\Sigma\to \Sigma$ is a $C^1$ diffeomorphism, and we invite the interested reader to  
look at the proof of \cite[Theorem 2.2]{FigIndrei} to understand how to adapt the argument using only that $T \in BV_{\rm loc}(\Sigma;\Sigma)$.

Alternatively, arguing by approximation, one can assume that $w$ is strictly positive in $\overline\Sigma\setminus \{0\}$, and that $f$ and $g$ are both strictly positive and smooth inside $\overline\Sigma.$
Then, if $T:\Sigma\to \Sigma$ denotes the optimal transport map from $f^\beta w$ to $g^\beta w$, \cite[Theorem 1 and Remark 4]{CEF} ensure that $T:\Sigma\to \Sigma$ is a diffeomorphism. This allows one to perform the proof of \eqref{20.01} avoiding the use of the fine properties of $BV$ functions.}
It is well known that, by the transport condition $T_\# F=G$, one has
$$
|\det(DT)|=\dfrac{F}{G\circ T}
$$
(see for instance \cite[Section 3]{DepF}).
Then, if we choose 
$$
F=f^\beta w \quad \text{and} \quad G=g^\beta w \,,
$$  
the Jacobian equation for $T$ becomes
$$
|\det(DT)|\,\dfrac{w\circ T}{w}=\dfrac{f^\beta}{g^\beta\circ T}\, .
$$
We observe that, since
$$
T_{\#}(f^\beta w)=g^\beta w\, ,
$$
then for any $0<\gamma<\beta$ we have 
\begin{equation}\label{19.10}
\int_{\Sigma}g^\gamma w\,dx= \int_{\Sigma}(g^{\gamma-\beta}\circ T ) f^\beta w\,dx 
= \int_{\Sigma} \left[|\det(DT)|\,\dfrac{w\circ T}{w}\right]^{\frac{\beta-\gamma}{\beta}}f^\gamma w\,dx\, .
\end{equation}
We choose $\gamma$ such that 
$$
\dfrac{\beta-\gamma}{\beta}=\dfrac{1}{n+a}\, \quad  \text{i.e.} \quad \gamma=\frac{p(n+a-1)}{n+a-p} \,.
$$
Since $T=\nabla \varphi$ for some convex function $\varphi$, then $DT$ is  symmetric and  nonnegative definite.
In particular $\det(DT)\geq 0$,
and it follows from  Young and the arithmetic-geometric inequalities that
$$
\begin{aligned}
\left[|\det(DT)|\,\dfrac{w\circ T}{w}\right]^{\frac{1}{n+a}}&\leq \dfrac{n}{n+a}\det(DT)^{1/n}+\dfrac{a}{n+a}\left(\dfrac{w\circ T}{w}\right)^{1/a} \\
& \leq \dfrac{1}{n+a}\left[\diver(T)+a\left(\dfrac{w\circ T}{w}\right)^{1/a}\right] \, .  
\end{aligned}
$$
Also, from the concavity of $w^{1/a}$ we have that 
$$
a\left(\dfrac{w\circ T}{w}\right)^{1/a} \leq \frac{\nabla w \cdot T}{w}
$$
(see \cite[Lemma 5.1]{CabreRosSerra}), hence
\begin{equation} \label{51}
\left[|\det(DT)|\,\dfrac{w\circ T}{w}\right]^{\frac{1}{n+a}} \leq  \dfrac{1}{n+a}\left(\diver(T)+ \frac{\nabla w \cdot T}{w} \right) \, .  
\end{equation}
(If $a=0$ then $w$ is just constant and \eqref{51} corresponds to the arithmetic-geometric inequality.)
Noticing that
$$
\diver(T)+ \frac{\nabla w \cdot T}{w} = \frac{1}{w} \diver(T w)\,,
$$
combining \eqref{19.10} and \eqref{51} we have   
$$
\begin{aligned}
\int_{\Sigma}g^\gamma w\,dx &\leq\dfrac{1}{n+a}\int_{\Sigma}\diver(T w) f^{\gamma}\,dx \\
&=-\dfrac{\gamma}{n+a}\int_{\Sigma}w f^{\gamma-1}T\cdot\nabla f \,dx+ \dfrac{1}{n+a}\int_{\partial\Sigma}w f^\gamma T\cdot\nu \,d\sigma\, .
\end{aligned}
$$
Here we notice that, since $T(x)\in\overline\Sigma$ for any $x \in \overline\Sigma$, the convexity of $\Sigma$ implies that $T \cdot \nu \leq 0$ on $\partial \Sigma$. Thus we obtain 
$$
	\int_{\Sigma}g^\gamma w \,dx\leq -\dfrac{\gamma}{n+a}\int_{\Sigma} f^{\gamma-1}T\cdot\nabla f\,w \,dx \leq \dfrac{\gamma}{n+a}\int_{\Sigma} f^{\gamma-1}H_0(T)H(\nabla f)\,w\,dx\, ,
$$
where the last inequality follows from the definition of the dual norm $H_0$.
%
%
%
Finally,  setting $p'=\frac{p}{p-1}$,
it follows by Holder's inequality that
$$
\begin{aligned}
\int_{\Sigma} f^{\gamma-1}H_0(T)H(\nabla f) \,w\,dx&
\leq\left(\int_{\Sigma} f^{p(\gamma-1)-\frac{p\beta}{p'}}H^p(\nabla f)\,w\,dx\right)^{1/p}\left(\int_{\Sigma} H_0^{p'}(T)\,f^{\beta}w\,dx \right)^{1/p'}\\
&=\left(\int_{\Sigma} H^p(\nabla f)\,w\,dx\right)^{1/p}\left(\int_{\Sigma} H_0^{p'}g^{\beta}w\,dx\right)^{1/p'} \, ,
\end{aligned}
$$
where we used the transport condition $T_\#(f^\beta w)=g^\beta w$ and the identity
$$
\gamma-1-\frac{\beta}{p'}=0\, .
$$
Hence, by this chain of inequalities we get \eqref{20.01}.

In order to prove the sharpness of our Sobolev inequality we choose $f=g=U_{1,\mathcal O}^{H,a} $. In this particular case the transport map reduces to the identity map $T(x)=\nabla\varphi(x) = x$ and $\det(DT)=1$. Also the homogeneity of $w$ implies that $\nabla w\cdot x=a\,w$. This implies that all the inequalities in the previous computations become equalities and we obtain \eqref{sobolev_thm}.

Finally, to prove the characterization of the minimizers one can argue as in \cite[Appendix A]{FigMagPrat} and \cite[Section 4]{CNV}.
More precisely, choose $g=U_{1,\mathcal O}^{H,a}$ and let $f$ be a minimizer. As noticed in the proof of \cite[Theorem 5]{CNV},
one can assume that $f \geq 0$.

First one shows that the support of $f$ is indecomposable (this is a measure-theoretic notion of the concept that $\{f>0\}$ is connected, see \cite[Appendix A]{FigMagPrat} for a definition and more details).
Indeed, otherwise one could write $f=f_1+f_2$ with 
$$
\int_{\Sigma}H^p(\nabla f) w (x) dx=\int_{\Sigma}H^p(\nabla f_1) w (x) dx+\int_{\Sigma}H^p(\nabla f_2) w (x) dx
$$
and then by applying \eqref{sobolev_thm} and the fact that $f$ is a minimizer, we would get
$$
\biggl(\int_{\Sigma}f^{\beta}w(x) dx\biggr)^{p/\beta}\geq \biggl(\int_{\Sigma}f_1^{\beta}w(x) dx\biggr)^{p/\beta}+\biggl(\int_{\Sigma}f_2^{\beta}w(x) dx\biggr)^{p/\beta}.
$$
Since
$$
\int_{\Sigma}f^{\beta}w(x) dx=\int_{\Sigma}f_1^{\beta}w(x) dx+\int_{\Sigma}f_2^{\beta}w(x) dx
$$
(because $f_1$ and $f_2$ have disjoint support), by concavity of the function $t\mapsto t^{p/\beta}$ we conclude that either $f_1$ or $f_2$ vanishes.

Once this is proved, one can then argue as in the proof of \cite[Proposition 6]{CNV} to deduce (from the fact that all the inequalities in the proof given above much be equalities) that $T$ must be of the form $T(x)=\lambda (x-x_0)$ for some $\lambda >0$ and $x_0 \in\Sigma$, from which the result follows easily.
Finally, properties (i)-(ii)-(iii) on the location of $x_0$ follow for instance from the fact that $T$ has to map $\Sigma$ onto $\Sigma$.
\end{proof}

\end{document}